\newtheorem{theorem}{Theorem}[section]
\newtheorem{lemma}[theorem]{Lemma}
\newtheorem{corollary}[theorem]{Corollary}
\newtheorem{proposition}[theorem]{Proposition}
\theoremstyle{definition}
\theoremstyle{remark}
\newtheorem{remark}[theorem]{Remark}
\numberwithin{equation}{section}
\newcommand{\on}[1]{\operatorname{#1}}
\newcommand{\abs}[1]{\lvert#1\rvert}
\newcommand{\un}[1]{\underline{#1}}
\newcommand{\mc}[1]{\mathcal{#1}} %mathcal, use capitals
\newcommand{\mf}[1]{\mathfrak{#1}} %mathfrak, both capitals and ordinary
\newcommand{\xg}{\backslash} % xie gang
\newcommand{\bd}{\bullet} % big dot, using in complexes
\newcommand{\lieg}{\mathfrak{g}} % lie algebra g
\newcommand{\ds}{\oplus} % direct sum
\newcommand{\ten}{\otimes} % tensor
\newcommand{\fz}{\mathbb{Z}} % field symbol of integer number Z
\newcommand{\fn}{\mathbb{N}} % field symbol of natural number N
\newcommand{\ff}{\mathbb{F}} % field symbol of arbitrary F
\newcommand{\ra}{\rightarrow} % "right arrow" in a map
\newcommand{\iso}{\simeq} % isomorphism
\newcommand{\pr}[1]{#1^{\prime}}
\newcommand{\SL}{\on{SL}}
\newcommand{\GL}{\on{GL}}
\newcommand{\n}[1]{N_{\tau({#1}),#1}}% use only in this paper
\newcommand{\nmi}[1]{N_{m_{i-1},#1}}% use only in this paper
\newcommand{\nmii}[1]{N_{m_{i},#1}}% use only in this paper
\begin{document}
\title[Modular Invariants]{Modular Invariants for some Finite Modular Pseudo-Reflection Groups}
%\author{Hao Chang}
%\address[Hao Chang]{School of Mathematics and Statistics, Central China Normal University, Wuhan 430079, China} \email{chang@mail.ccnu.edu.cn}
\author{Ke Ou} \address[Ke Ou]{School of Statistics and Mathematics, Yunnan University of Finance and Economics, Kunming 650221, China} \email{keou@ynufe.edu.cn}
%\thanks{Correspondence Author: Ke Ou}
\subjclass[2010]{13A50; 17B50; 20F55}
\keywords{Modular invariant theory, pseudo-reflection group, positive characteristic, Weyl group, Cartan type Lie algebra}
\date{\today}
\begin{abstract}
	In this paper, we determine the modular invariants for some pseudo-reflection subgroups of the finite general linear group $ \GL_n(q) $ acting on the tensor product of the symmetric algebra $ S^{\bd}(V) $ and the exterior algebra $ \wedge^{\bd}(V) $ of the natural $ \GL_n(q) $-module $ V $. 
	We are particularly interested in the case when the pseudo-reflection groups are subgroups of the parabolic subgroup of $ \GL_n(q) $ which are generalization of Weyl groups of Cartan type Lie algebra.	
\end{abstract}
\maketitle
\section{Introduction}
Let $ p $ be a fixed prime and $ \ff_q $ be the finite field with $ q = p^r $ for some $ r\geq 1. $ The finite
general linear group $ \GL_n(q) $ acts naturally on the symmetric algebra $ \mc{P}:=S^{\bd}(V) $, where $ V=\ff_q^n $ is the standard $ \GL_n(q) $-module.
Let $ G $ be a subgroup of $ \GL_n(q). $ Then $ G $ also acts as a group of graded algebra automorphisms of $ \mc{P} $, and the collection of $ G $ invariant polynomials forms a graded algebra $ \mc{P}^G $, the algebra of invariants.
Suppose $ M $ is a finite dimensional $ \ff_qG $-module. Then one gets an induced $ G $-action on the free $ \mc{P} $-module $ \mc{P}\ten M $. The set $ (\mc{P}\ten M)^G, $ consisting of all
$ G $-invariant elements in $ \mc{P}\ten M,  $ forms a $ \mc{P}^G $-module which is called the module of covariants of type $ M $ by Broer and Chuai (\cite{BC}).
In particular, when $ M=\wedge^{\bd}(V), $ the exterior algebra
on $ V, $ one will be interested in the graded algebraic structure on $ \mc{A}:=S^{\bd}(V)\ten \wedge^{\bd}(V) $ and the
module structure of $ \mc{A}^G $ over $ \mc{P}^G $, stemming from the computation of mod $ p $ cohomology rings of finite $ p $-groups and the study of invariant differential forms in geometry (see \cite[Chapter 5]{Ben}  for example). For the last half century, modular invariant theory has been playing an important role in algebraic topology. We refer to \cite{Sm} and \cite{Wi} for expositions on Dickson invariants and connections to topology. 
%In \cite{KM}, Kuhn and Mitchell relate the multiplicity series for the Steinberg representation of $ \GL_n(q) $ in $ \mc{P} $ with 

The $ \GL_n(q) $ invariants in $ \mc{P} $ (resp. $ \mc{A} $) are determined by Dickson \cite{Di} (resp.  Mui \cite{Mu}).
For a composition $ I=(n_1,\cdots,n_l) $ of $ n, $ let $ \GL_I(q) $ be the parabolic subgroup of $ \GL_n(q) $ associated to $ I $ which of the form
\begin{equation}
	\GL_I(q)=\left( 
	\begin{matrix}\GL_{n_1}(q) & * & \cdots & *\\ 
		0 & \GL_{n_2}(q) &\cdots & *\\
		\vdots & \vdots & \vdots & \vdots\\
		0 & 0 & \cdots & \GL_{n_l}(q)
	\end{matrix}
	\right).
\end{equation}
Generalizing \cite{Di}, Kuhn and Mitchell \cite{KM} showed that the algebra $ \mc{P}^{\GL_I(q)} $ is a polynomial algebra in $ n $ explicitly given generators. Minh and T\`{u}ng \cite{MT} determined the $ \GL_I(q) $ invariants of $ \mc{A} $ in the case $ q = p $, as they  used some Steenrod algebra arguments. Wan and Wang \cite{WW} generalized to relative $ \GL_I(q) $ invariants of $ \mc{A} $ for an arbitrary $ q. $

For given subgroups $ G_i $ of $ \GL_{n_i}(q),\ 1\leq i\leq l, $ let $ G_I $ and $ U_I $ be the subgroup of $ \GL_I(q) $ which is of the form
\begin{equation}\label{G}
G_I=\left( 
\begin{matrix}G_1 & * & \cdots & *\\ 
	0 & G_2 &\cdots & *\\
	\vdots & \vdots & \ddots & \vdots\\
	0 & 0 & \cdots & G_l
\end{matrix}
\right)
\text{ and }
U_I=\left( 
\begin{matrix}I_{n_1} & * & \cdots & *\\ 
	0 & I_{n_2} &\cdots & *\\
	\vdots & \vdots & \ddots & \vdots\\
	0 & 0 & \cdots & I_{n_l}
\end{matrix}
\right) \text{ respectively,}
\end{equation}
where $ I_j $ is the identity matrix of $ \GL_j(q). $ These groups can be viewed as a special class of polynomial glueing groups that originally introduced by Huang in \cite{H} and recently have been reformulated in \cite{CSW}. In particular, the  $ G_I $ invariants of $ \mc{P} $ can be determined by the Gluing Lemma of \cite{H} or \cite[Theorem 3.10]{CSW}.

In this paper, we study the $ G_I $ invariants of $ \mc{A} $ when $ G_i $ is $ I_{n_i}, $ general linear group $ \GL_{n_i}(q),$ special linear group $ \SL_{n_i}(q)$ and imprimitive pseudo-reflection group $ G(m,a,n_i) $ (see Section \ref{6.2.1} for the definition). 
Since $ U_I $ is a normal subgroup of $ G_I, $ we have $ \mc{P}^{G_I}\iso (\mc{P}^{U_I})^{L_I} $ and $ \mc{A}^{G_I}\iso (\mc{A}^{U_I})^{L_I} $ where $ L_I=G_I/U_I $ is the quotient group of $ U_I $ in $ G_I. $ As a result, the $ U_I $ invariants of $ \mc{P} $ and $ \mc{A} $ play the central roles. For more details see equation (\ref{U_I on A}) and Theorem \ref{main thm}. Our investigation generalizes the results on modular invariants of $ \mc{A} $ by  Minh-T\'{u}ng \cite{MT} and Mui \cite{Mu}. 

Note that $ G_I $ is a  generalization of  $ \GL_I(q) $ as well as the Weyl groups of Cartan type Lie algebras. Namely, $ G_I=\GL_I(q) $ if $ G_i= \GL_{n_i}(q) $ for all $ i. $ And $ G_I $ becomes a Weyl group of Cartan type Lie algebras if $ l=2,\ q=p,$  $ G_1=\GL_{n_1}(q),\ G_2=S_{n_2} $ or $ S_{n_2}\ltimes \fz_2^{n_2} $ (cf. \cite{Je} or \cite[Proposition 2.2]{CO}, see Section \ref{6.111} for the definition). If $ G $ is a Weyl group of Cartan type Lie algebra, the invariants $ \mc{P}^G $ and $ \mc{A}^G $ which are determined in Corollary \ref{Weyl gp inv} will help us to understand the semisimple orbits and Chevalley restriction theorem for Cartan type Lie algebras (cf. \cite{CO}). 
%From the viewpoint of representation theory, the invariants of Weyl group of Lie algebra $ \lieg $ are providing very interesting yet limited answers to the problem of understanding $ \lieg $ modules, such as Chevalley's restriction theorem for reductive Lie algebras (cf. \cite{Hu}). 
Meanwhile, $ G_I $ is a modular finite pseudo-reflection group  if $ l\geq 2 $ and all $ G_i $ are pseudo-reflection groups since $ p\mid |U_I| $ (see Section \ref{PRG}). 

Let $ G $ be a finite subgroup of $ \GL_n(q) $ and $ M $ be a finite dimensional module. In the non-modular case, a lot more is known than in the modular case. For example, the following results are true in the non-modular situation but not so in the modular situation. The algebra $ \mc{P}^G $ is a Cohen-Macaulay and $ (\mc{P}\ten M)^G $ a Cohen-Macaulay module over $ \mc{P}^G $ (\cite{HE}). The invariant algebra $ \mc{P}^G $  is a polynomial algebra if and only if $ G $ is a pseudo-reflection group (this goes back to Chevalley, Shephard, Todd and Bourbaki, see  \cite[Theorem 7.2.1]{Ben}, \cite[Theorem 18-1]{Ka}), and all modules of covariants are free when $ G $ is a pseudo-reflection group.
This is one of the main reasons why modular rings of invariants are more difficult to handle.% In our case,   since $ \mc{A}^{G_I} $ is a free $ \mc{P}^{G_I} $ module under some assumptions, $ \mc{A}^G $ is a Cohen-Macaulay module.

Our first main result is the following statement.

\begin{theorem}
	Let $ I=(n_1,\cdots,n_l) $ be a composition of $ n $. Then $ \mc{A}^{U_I} $ is a free module of rank $ 2^n $ over the algebra $ \mc{P}^{U_I}. $
\end{theorem}
We refer to Theorem \ref{main thm} for a more precise version of Theorem 1.1 where an explicit basis for the free module is given. Theorem 1.1 in the special case when $ l=n $ and $ I=(1,\cdots,1) $ 
is due to Mui \cite{Mu}, and our approach toward Theorem 1.1 (or rather Theorem \ref{main thm}) is a generalization of \cite{Mu}. We will then discuss $ (\mc{A}^{U_I})^{G_i} $ in Section 6 case by case where $ G_i=\GL_{n_i}(q),\ \SL_{n_i}(q) $ or $ G(m,a,n_i) $. Then we have the following statement.
\begin{theorem}
	Let $ I=(n_1,\cdots,n_l) $ be a composition of $ n $. Suppose $ p>n_i $ whence $ G_i=G(r_i, a_i, n_i). $
	\begin{enumerate}
%		\item If $ G_i=G(r_i,1,n_i) $ such that $ r_i\mid q-1 $ for all $ i=1,\cdots,l. $ Then $ \mc{A}^{G_I} $ is a free module of rank $ 2^n $ over the algebra $ \mc{P}^{G_I}. $
		\item If $ G_i=G(r_i,a_i,n_i) $ such that $ r_i\mid q-1 $ for all $ i=1,\cdots,l. $ Then $ \mc{A}^{G_I} $ is a free module of rank $ 2^na_1\cdots a_l $ over the algebra $ \mc{P}^{\overline{G_I}} $, where $ \overline{G_I}=\left(G(r_1,1,n_1)\times\cdots\times G(r_l,1,n_l) \right)\ltimes U_I. $
		\item If there is $ 0\leq a\leq l $ such that 		
		\[ G_i=\left\{
		\begin{array}{ll}
		\GL_{n_i}(q) & i=1,\cdots,a \\
		G(r_i,1,n_i) & i=a+1,\cdots ,l
		\end{array}
		\right., \]
		then $ \mc{A}^{G_I} $ is a free module of rank $ 2^n $ over the algebra $ \mc{P}^{G_I}. $
	\end{enumerate}
\end{theorem}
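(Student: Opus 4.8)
The plan is to separate the modular part of the group, which lives entirely in the unipotent radical $U_I$, from the reductive (nonmodular) part carried by the Levi factor. Since $G_I=(G_1\times\cdots\times G_l)\ltimes U_I$ with $U_I$ normal, taking invariants in two stages gives $\mc{A}^{G_I}=(\mc{A}^{U_I})^{L}$, where $L=G_1\times\cdots\times G_l$ acts on $\mc{A}^{U_I}$ through the quotient $G_I/U_I\cong L$; likewise $P^{G_I}=(P^{U_I})^L$. By Theorem 1.1 the module $\mc{A}^{U_I}$ is free of rank $2^n$ over $P^{U_I}$, with an explicit basis supplied by the precise form of that theorem. So the whole problem reduces to computing the $L$-invariants of this free module, and since $L$ is a direct product one may take invariants one block at a time, invoking the single-block computations of $(\mc{A}^{U_I})^{G_i}$ from Section 6.

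The point of the hypotheses $r_i\mid q-1$ and $p>n_i$ is precisely that they make each reflection block nonmodular: as $q=p^r$ forces $p\nmid q-1$ and hence $p\nmid r_i$, while $p>n_i$ gives $p\nmid n_i!$, the order $r_i^{\,n_i}n_i!$ of $G(r_i,a_i,n_i)$ is prime to $p$. Thus each $G_i$ acting on $\ff_q^{n_i}$ is a genuine (nonmodular) pseudo-reflection group, so Shephard--Todd makes $P^{G_i}$ a polynomial algebra and Solomon's theorem makes $(S^{\bd}\ten\wedge^{\bd})^{G_i}$ a free module of rank $2^{n_i}$ over $P^{G_i}$, generated by exterior products of the differentials of the basic invariants; the $\GL_{n_i}(q)$ blocks are governed in the same spirit by Dickson and Mui \cite{Mu}. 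For part (2), every block is either a full $\GL_{n_i}(q)$ or a group $G(r_i,1,n_i)$ with $a_i=1$, and in both cases the block invariant ring is polynomial and the expected rank $2^{n_i}$ is attained; assembling the blocks over the free $P^{U_I}$-module then yields a free $P^{G_I}$-module of rank $\prod_i 2^{n_i}=2^{n}$.

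For part (1) the plan is to reach the general $G(r_i,a_i,n_i)$ from the already-understood case $a_i=1$ by a relative-invariant argument. Because $G(r_i,a_i,n_i)$ is the kernel of a linear character of $G(r_i,1,n_i)$ of order $a_i$, the group $G_I$ is normal in $\overline{G_I}=(G(r_1,1,n_1)\times\cdots\times G(r_l,1,n_l))\ltimes U_I$ with abelian quotient $\overline{G_I}/G_I\cong\fz_{a_1}\times\cdots\times\fz_{a_l}$ of order $a_1\cdots a_l$. As this quotient has order prime to $p$, the action on $\mc{A}^{G_I}$ splits into isotypic components, so that $\mc{A}^{G_I}=\bds_{\chi}(\mc{A}^{\,\overline{G_I}})_\chi$, where $\chi$ runs over the $a_1\cdots a_l$ linear characters and $(\mc{A}^{\,\overline{G_I}})_\chi$ is the module of relative $\overline{G_I}$-invariants of weight $\chi$. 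Each such relative component is free of rank $2^n$ over $P^{\,\overline{G_I}}$ by the relative form of the $a_i=1$ computation (in the spirit of the relative invariants of $\GL_I$ in \cite{WW}), and summing over the $a_1\cdots a_l$ characters gives a free $P^{\,\overline{G_I}}$-module of rank $2^na_1\cdots a_l$, as claimed.

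The main obstacle is the interface between the modular and nonmodular halves: the off-diagonal entries of $U_I$ genuinely mix the blocks, so $P^{U_I}$ is not a tensor product of block invariant rings but a flag-type algebra in the manner of Kuhn--Mitchell, and the residual $L$-action does not respect any naive block splitting of the Theorem 1.1 basis. The crux is therefore to show that this residual action is triangular (indeed monomial up to invariants) with respect to the block filtration, so that on passing to $L$-invariants the distinguished basis of $\mc{A}^{U_I}$ descends, block by block, to a basis of the claimed free module; keeping the rank count exact --- in particular producing the factor $a_1\cdots a_l$ in part (1) as the number of relative-invariant weights, and confirming that $P^{\,\overline{G_I}}$ rather than some larger ring is the correct base --- is where the bookkeeping is most delicate.
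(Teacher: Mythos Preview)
Your two-stage reduction $\mc{A}^{G_I}=(\mc{A}^{U_I})^{L}$ with $L=G_1\times\cdots\times G_l$ is exactly the paper's starting point, and your closing paragraph correctly identifies where the real work lies. But the shortcuts you propose for that work do not go through, and there is also a factual slip.

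First, the slip: $P^{U_I}$ \emph{is} a tensor product over the blocks. Writing $P_i=\ff_q[v_{i,1},\ldots,v_{i,n_i}]$, one has $P^{U_I}=\bigotimes_{i=1}^l P_i$ (Lemma~3.1 / equation~(\ref{U_I on A})), and $G_i$ acts on $P_i$ through its natural module and trivially on $P_j$ for $j\neq i$. So the polynomial part behaves as well as one could hope; the subtlety is entirely in the exterior part.

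That subtlety is precisely why your appeal to Solomon's theorem is not justified. Solomon applies to $(S(V_i)\otimes\wedge(V_i))^{G_i}$, but $\mc{A}^{U_I}$ is \emph{not} of the form $\bigotimes_i\bigl(S(V_i)\otimes\wedge(V_i)\bigr)$ as an $L$-module: the explicit basis elements $N_{\tau(S),S}$ of Theorem~\ref{main thm} genuinely mix the blocks in their exterior factors. What the paper does, and what your outline omits, is a direct analysis of the $G_i$-action on this basis. For $S$ with $\tau(S)<m_{i-1}$ the element $N_{\tau(S),S}$ is $G_i$-fixed (Lemma~\ref{5.11}); for $\tau(S)\geq m_i$ it transforms by $\det$ (Lemma~\ref{5.2}); and for the critical range $\tau(S)=m_{i-1}$ one needs the explicit orbit decomposition of Proposition~\ref{S action} (for $G(m,a,n_i)$, producing the elements $c_{i,k}(N_{m_{i-1},T_{i,k}}\Delta_{i,k}\beta_{i,k,r}\alpha_{i,k,j})$) or Proposition~\ref{5.6} (for $\GL_{n_i}(q)$, producing $N_{m_i,S}\theta_i^{q-2}$). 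These computations, together with the linear-independence criterion of Proposition~\ref{base}, are what actually establish freeness and permit the rank count; you have named this step as the ``crux'' but not carried it out.

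For part~(1), your character-decomposition idea is attractive and is morally what the factors $\beta_{i,k,r}$ encode, but the claim that each $\chi$-isotypic piece is free of rank $2^n$ over $P^{\overline{G_I}}$ is not automatic: it presupposes a relative-invariant version of the $a_i=1$ case, which in turn requires exactly the block-by-block analysis above. The paper's route is instead to prove the $G(m,a,n_i)$ case directly (Proposition~\ref{S action}), where the $a$-fold multiplicity arises concretely from the monomials $\beta_{i,k,r}$, $r=1,\ldots,a$, rather than abstractly from characters.
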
 
For more details and explicit basis of these free modules, we refer to Theorem \ref{imprimi 2} for part 1, Theorem \ref{imprimitive} for the case $ a=0 $ of  part 2 and Theorem \ref{6.3} for the case $ 1\leq a\leq l $ of part 2. 

For module of covariants of type $ M, $ Broer and Chuai provide the so called Jacobian criterion of freeness for $ (\mc{P}\ten M)^G $ as $ \mc{P}^G $ module, see \cite[Theorem 3]{BC}. Our approach does not rely on this result, since it seems too hard to verify the Jacobian criterion in our case.

The paper is organized as follows. Some notations and conventions are provided in Section 2.1. The necessary concepts and results concerning pseudo-reflection groups are recalled in Section 2.2. In Section 2.3, we first recall the Dickson invariants and Kuhn-Mitchell invariants of $ \mc{P}, $ then the invariants of $ \mc{P} $ over $ G_I $ are determined by \cite{CSW}. In Section 3, we recall the results by Mui, Ming-T\`{u}ng and Wan-Wang invariants of $ \mc{A}. $ Most concepts and results in these sections can be found in \cite{CSW, Di,Ka,KM,MT,Mu,WW}. The $ U_I $ and $ G_I $ invariants of $ \mc{A} $ are given in Sections 4, 5 and 6. Precisely, Section 4 deals with $ \mc{A}^{U_I} $ and Sections 5, 6 describe $ \mc{A}^{G_I} $ for the concrete $ G_i. $

\section{Preliminary}
\subsection{}
Throughout this paper, let $ I=(n_1,\cdots,n_l) $ be a fixed composition of $ n $. 

Set $ m_0=0$ and $ m_k=\sum_{i=1}^{k}n_i,\ k=1,\cdots,l. $ 
For each $ 1\leq s\leq n, $ define $$ \tau(s)=m_j \text{ if } m_j<s\leq m_{j+1}. $$

Recall the definition of $ G_I $ and $ U_I $ given by formula (\ref{G}). One can check that $ G_I=L_I\ltimes U_I, $  where \[
L_I=\left( 
\begin{matrix}G_1 & 0 & \cdots & 0\\ 
	0 & G_2 &\cdots & 0\\
	\vdots & \vdots & \ddots & \vdots\\
	0 & 0 & \cdots & G_l
\end{matrix}
\right).
\]

Moreover, since $ U_I $ is a normal subgroup of $ G_I, $ we have $ \mc{P}^{G_I}\iso (\mc{P}^{U_I})^{L_I} $ and $ \mc{A}^{G_I}\iso (\mc{A}^{U_I})^{L_I}. $

Suppose $ V=\ff_q^n, $ the symmetric algebra  $ S^\bd (V) $ and the exterior algebra $ \wedge^\bd (V) $ will be identified with $ \ff_q[x_1,\cdots, x_n] $ and $ E[y_1,\cdots, y_n] $ respectively. Namely, $ \mc{P}=\ff_q[x_1,\cdots, x_n] $ and $ \mathcal{{A}}= \ff_q[x_1,\cdots, x_n] \ten E[y_1,\cdots, y_n]. $ Then $ \mc{A} $ is an associative superalgebra with a $ \fz_2 $-gradation induced by the trivial $ \fz_2 $-gradation of $ \ff_q[x_1,\cdots, x_n] $ and the natural $ \fz_2 $-gradation of $ E[y_1,\cdots, y_n]. $ 
Denote by $ \text{d}(f) $ the parity of $ f\in \mc{A}. $ 

Set $ \mathds{B}(n)=\bigcup_{k=0}^{n}\mathds{B}_k $ where $ \mathds{B}_0=\emptyset $ and $ \mathds{B}_k=\{ (i_1,\cdots,i_k)\mid 1\leq i_1<\cdots<i_k\leq n \}. $ Then $ E[y_1,\cdots, y_n] $ has a basis $ \{ y_J\mid J\in \mathds{B}(n) \} $ where $ y_J=y_{j_1}\cdots y_{j_t} $ if $ J=(j_1,\cdots,j_t)\neq \emptyset $ and $ y_{\emptyset}=1. $ 

For $ 0\leq k\leq n $ and $ I,J\in \mathds{B}_k, $ we say that $ I<J $ if 
there is $ 1\leq l\leq k $ such that $ i_l<j_l $ and $ i_{s}=j_s, $ for all $ l<s\leq k. $
Moreover, $ I\leq J $ if $ I=J $ or $ I<J. $

One can check that $ (\mathds{B}_k,\leq) $ is a total order on $ \mathds{B}_k $ for all $ 1\leq k\leq n. $

For $ K=(k_1,\cdots,k_t)\in\mathds{B}_t, $ and $ a,a_i\in \{ 1,\cdots,n \}\xg \{k_1,\cdots,k_t\}, $ define 
\begin{itemize}
	\item $ K+\{ a \} := (\cdots,k_s,a,k_{s+1},\cdots) $ if $ k_s<a<k_{s+1}; $	
	\item $ K+\{ a_1,\cdots,a_s \}:= (\cdots((K+\{ a_1 \})+ \{a_2\}) \cdots); $
	\item $ K-\{k_j\}:=(k_1,\cdots,\widehat{k_j},\cdots,k_t); $
	\item $ K-\{k_{j_1},\cdots, k_{j_s}\}:= (\cdots((K-\{ k_{j_1} \})- \{k_{j_2}\}) \cdots); $
	\item $ \tau(K):= \left\{
	\begin{array}{ll}
	\tau(k_t) & K\notin \mathds{B}_0 \\
	0 & K\in \mathds{B}_0
	\end{array}\right.;
$
	\item $\on{hd}{(K)}:=K-\{ k_j\mid k_j\leq \tau(K) \}. $ Namely, $ \on{hd}{(K)}=(k_{i+1},\cdots,k_t) $ if $ k_i\leq \tau(K)<k_{i+1}. $
\end{itemize}

\subsection{Pseudo-reflection groups}\label{PRG}
In this subsection, we will recall some basic facts for pseudo-reflection groups. More details refer to \cite{Ka}.

For a finite dimensional vector space $ W $ over $ \ff_q $, a pseudo-reflection is a linear isomorphism $ s: W \ra W $ that is not the
identity map, but leaves a hyperplane $  H\subseteq W $ pointwise invariant. $ G \subseteq \GL(W) $ is a pseudo-reflection group if $ G $ is generated by its
pseudo-reflections. We call $ G $ is non-modular if $ p\nmid \abs{G} $ while $ G $ is modular otherwise.
\begin{lemma}
	$ G_I $ is a  pseudo-reflection group if all $ G_i $ are pseudo-reflection groups.
\end{lemma}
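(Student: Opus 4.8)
The plan is to exhibit a generating set of $G_I$ consisting of pseudo-reflections of $V=\ff_q^n$, where by a \emph{pseudo-reflection} I mean an element $\sigma\in\GL_n(q)$ with $\operatorname{rank}(\sigma-\mathrm{id})=1$, equivalently whose fixed space $\ker(\sigma-\mathrm{id})$ is a hyperplane. Since $G_I=L_I\ltimes U_I$ is generated by the two subgroups $L_I$ and $U_I$, and $G_I$ is visibly finite (both $L_I=G_1\times\cdots\times G_l$ and $U_I$ are finite), it suffices to produce such generators inside each of $L_I$ and $U_I$ separately and then combine. I would treat the two factors in turn.

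For $U_I$, write $E_{ij}$ for the matrix unit with a single $1$ in position $(i,j)$. Whenever $\tau(i)<\tau(j)$, i.e. $i$ lies in a strictly earlier block than $j$, the position $(i,j)$ is a free (strictly block-upper) entry, so $I_n+tE_{ij}\in U_I$ for every $t\in\ff_q$; and for $t\neq0$ this element is a transvection, since $(I_n+tE_{ij})-\mathrm{id}=tE_{ij}$ has rank $1$ (its fixed space is the hyperplane $\{v\in V:v_j=0\}$). First I would check that these elementary matrices generate $U_I$. Given any $I_n+N\in U_I$, the nonzero entries of $N$ occupy strictly block-upper positions, and a direct computation shows that clearing the entry $(i,j)$ by right-multiplication by $I_n-tE_{ij}$ only modifies entries of strictly larger block distance. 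Hence, processing positions in order of increasing block distance reduces $I_n+N$ to the identity without disturbing the entries already set to zero; this is the standard Gaussian-elimination argument, and it terminates because $U_I$ is unipotent. Thus $U_I$ is generated by pseudo-reflections of $V$.

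For $L_I$, fix an index $i$ and let $\sigma_i\in G_i\leq\GL_{n_i}(q)$ be a pseudo-reflection of the $i$-th coordinate block $\ff_q^{n_i}$, with fixed hyperplane $H_i\subset\ff_q^{n_i}$. Under the block-diagonal embedding $\GL_{n_i}(q)\inj L_I$ acting as the identity on all other blocks, $\sigma_i$ fixes $H_i\oplus\bigl(\bigoplus_{j\neq i}\ff_q^{n_j}\bigr)$, a subspace of codimension $1$ in $V$; hence its image is again a pseudo-reflection of $V$. Since by hypothesis each $G_i$ is generated by its pseudo-reflections, and $L_I=G_1\times\cdots\times G_l$ is generated by the images of the individual factors, it follows that $L_I$ is generated by pseudo-reflections of $V$.

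Combining the two cases, $G_I$ is generated by the transvections $I_n+tE_{ij}$ ($\tau(i)<\tau(j)$, $t\neq0$) together with the block-diagonal images of the pseudo-reflections of the $G_i$, all of which are pseudo-reflections of $V$; therefore $G_I$ is a finite pseudo-reflection group. The only step requiring genuine care is the generation of $U_I$ by the elementary transvections; the rank-one and fixed-hyperplane verifications are immediate, so I expect the elimination argument for $U_I$ to be the main—though routine—obstacle, cleanly handled by the induction on block distance sketched above.
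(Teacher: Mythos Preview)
Your proof is correct and follows exactly the approach of the paper: the paper's proof simply asserts that $G_I$ is generated by the set $J$ of pseudo-reflections of $G_1\times\cdots\times G_l$ together with the set $K$ of elementary matrices of $U_I$, leaving the verification to the reader, whereas you supply that verification in full (the transvection/rank-one check and the Gaussian-elimination generation of $U_I$). There is no substantive difference in strategy.
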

\begin{proof}
	Let $ J $ (resp. $ K $) be the set consisting of all pseudo-reflections of $ G_1\times\cdots \times G_l $ (resp. all elementary matrices of $ U_I $). One can check that $ G_I $ can be generated by $ J\cup K. $
\end{proof}
\begin{remark}
	As a corollary, the Weyl groups of Cartan type Lie algebras with type $ W,\ S $ and $ H $ are modular pseudo-reflection groups.
\end{remark}

%It's well known that all $ S(W), S(W)^H_1 $ and $ S(W)^W $ are polynomial algebras. 
%The following proposition may be well-known. For convenient,  we prove it independently.

\begin{lemma}\label{6.4}\cite[Corollary 3.1.4]{CW}
	Suppose both $ H_1 $ and $ H $ are non-modular pseudo-reflection groups and
	$ H_1 $ is a subgroup of $ H. $ Then 
	$ S^{\bd}(W)^{H_1} $ is a free $ S^{\bd}(W)^H $ module of rank $ \frac{|H|}{|H_1|}. $ 
\end{lemma}

\subsection{Invariants of $ \mc{P} $}
For $ 1\leq k\leq n $ and $ 0\leq i\leq k, $ define homogeneous polynomials $ V_k,\  L_k$ and $ L_{k,i} $ in variables $ x_1,\cdots, x_k $:
\[ V_k= \prod_{\lambda_1,\cdots, \lambda_{{k-1}} \in\ff_q} (\lambda_1x_1+\cdots\lambda_{k-1}x_{k-1}+x_k), \]
\[ L_k=\prod_{i=1}^k V_i = \prod_{i=1}^k \prod_{\lambda_1,\cdots, \lambda_{{i-1}} \in\ff_q} (\lambda_1x_1+\cdots+\lambda_{i-1}x_{i-1}+x_i), \]
\[L_{k,i}=\left| 
\begin{matrix}x_1 & x_2 & \cdots & x_k\\
	x_1^q & x_2^q & \cdots & x_k^q\\
	\vdots & \vdots & \vdots & \vdots\\ 
	\widehat{x_1^{q^i}} & \widehat{x_2^{q^i}} & \cdots & \widehat{x_k^{q^i}}\\
	\vdots & \vdots & \vdots & \vdots\\ 
	x_1^{q^{k}} & x_2^{q^{k}} & \cdots & x_k^{q^{k}}
\end{matrix}
\right|,\]
where the hat $\ \widehat{ }\ $ means the omission of the given term as usual.

%\[ \prod_{\lambda_1,\cdots, \lambda_{{i-1}} \in\ff_q} (X+\lambda_1x_1+\cdots\lambda_{n}x_{n})=X^{q^n}+\sum_{k=0}^{n-1} Q_{n,k}X^{q^k}. \]

By \cite{Di},  $ L_k $ is a factor of $ L_{k,i}. $
Define $ Q_{k,i}=L_{k,i}/L_k. $ Then $ \on{deg}(Q_{k,i})=q^k-q^i. $
The subalgebras of invariants over both $ \SL_n(q) $ and  $ \GL_n(q) $ of $ \mc{P} $ are polynomial algebras. Moreover,
\begin{equation}\label{Dickson}
 \mc{P}^{\SL_n(q)} = \ff_q[L_n,Q_{n,1}, \cdots, Q_{n,n-1}], 
\end{equation}
\begin{equation}\label{Dickson2}
\mc{P}^{\GL_n(q)} = \ff_q[Q_{n,0}, \cdots, Q_{n,n-1}].\quad\ \ 
\end{equation}
		
For $ 1\leq i\leq l,\ 1\leq j\leq n_i, $ define 
\begin{equation}\label{vij}
	v_{i,j}=\prod_{\lambda_1,\cdots, \lambda_{m_{i-1}}\in\ff_q} (\lambda_1x_1+\cdots \lambda_{m_{i-1}}x_{m_{i-1}}+x_{m_{i-1}+j}),
\end{equation}
\begin{equation}\label{qij}
	q_{i,j}=Q_{n_i,j}(v_{i,1},\cdots,v_{i,n_i} ).
\end{equation}
Then $ \deg(v_{i,j})=q^{m_{i-1}} $ and $ \deg (q_{i,j})=q^{m_i}-q^{m_i-j}. $ %By definition, $ v_{i,j}=L_{i+1}(x_1,\cdots,x_) $

Recall the Hilbert series of a graded space $ W^{\bd}=\ds_i W^i $ is by definition the generating function $ H(W^{\bd},t):=\sum_i t^i \dim W^i. $

Similar to the proof of \cite[Lemma 1]{MT},	
\begin{equation}\label{U_I on A}
\mc{P}^{U_I} = \ff_q[x_1,\cdots,x_{n_1}, v_{2,1} ,\cdots, v_{2,n_2},\cdots, v_{l,1},\cdots, v_{l,n_l} ].  
\end{equation}

Moreover, by \cite[Theorem 2.2]{KM} and \cite[Theorem 1.4]{He},
\begin{equation}\label{KM}
\mc{P}^{\GL_I(q)} = \ff_q[q_{i,j}\mid 1\leq i\leq l, 1\leq j\leq n_i],
\end{equation}
\[ H\left(\mc{P}^{\GL_I(q)},t\right)=\frac{1}{\prod_{i=1}^l \prod_{j=1}^{n_i}
(1- t^{q^{m_i}-q^{m_{i}-j}})}. \]

By \cite[Theorem 3.10]{CSW}, the following proposition holds.

\begin{proposition}\label{poly inv}
For $ 1\leq i\leq l, $
assume that $ \ff_q[x_1,\cdots,x_{n_i}]^{G_{i}} =\ff_q[e_{i,1},\cdots, e_{i, n_i}] $ is a polynomial algebra such that $ \deg(e_{i,j})=\alpha_{ij}. $ For $ 1\leq j\leq n_i, $ define
	$  u_{i,j}=e_{i,j}(v_{i,1},\cdots,v_{i,n_i}).  $ The subalgebra $ \mc{P}^{G_I} $ of $ G_I $-invariants in $ \mc{P} $ is a polynomial ring on the generators $ u_{i,j} $ of degree $ \alpha_{ij}\cdot q^{m_{i-1}} $ with  $ 1\leq i\leq l, 1\leq j\leq n_i. $ Namely,
	$$  \mc{P}^{G_I} = \ff_q[u_{i,j}\mid 1\leq i\leq l, 1\leq j\leq n_i]. $$
	Moreover, the Hilbert series of $ \mc{P}^G $ is 
	\[ H\left(\mc{P}^{G_I},t\right)=\frac{1}{\prod_{i=1}^l \prod_{j=1}^{n_i}
		(1- t^{\alpha_{ij}\cdot q^{m_{i-1}}})}. \]
\end{proposition}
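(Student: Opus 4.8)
The plan is to deduce the proposition from the preceding lemma, which identifies $P^{G_I}$ with $\ten_{i=1}^l P_i^{G_i}$, where $P_i=\ff_q[v_{i,1},\dots,v_{i,n_i}]$. This reduces everything to a single block: I must show that $P_i^{G_i}=\ff_q[u_{i,1},\dots,u_{i,n_i}]$ is polynomial with the asserted degrees. The guiding idea is that $G_i$ acts on $P_i$ exactly as it acts on a standard polynomial ring $\ff_q[z_1,\dots,z_{n_i}]$ under the identification $z_j\leftrightarrow v_{i,j}$; granting this, the hypothesis on $\ff_q[x_1,\dots,x_{n_i}]^{G_i}$ transports verbatim.

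The key step, and the point I expect to be the main obstacle, is to verify this equivariance. Setting $C_i=\langle x_1,\dots,x_{m_{i-1}}\rangle_{\ff_q}$, the definition \eqref{vij} reads $v_{i,j}=\prod_{c\in C_i}(x_{m_{i-1}+j}+c)=\phi_i(x_{m_{i-1}+j})$, where $\phi_i(y)=\prod_{c\in C_i}(y+c)$ is the $\ff_q$-linearized (additive) polynomial attached to the subspace $C_i$; thus $\phi_i(ay+by')=a\,\phi_i(y)+b\,\phi_i(y')$ for $a,b\in\ff_q$, and its coefficients lie in $\ff_q[x_1,\dots,x_{m_{i-1}}]$, hence are fixed by $G_i$. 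Consequently, if $g\in G_i$ acts on the $i$-th block by $g\cdot x_{m_{i-1}+j}=\sum_k a_{jk}x_{m_{i-1}+k}$ with $a_{jk}\in\ff_q$, then $g\cdot v_{i,j}=\phi_i(g\cdot x_{m_{i-1}+j})=\sum_k a_{jk}\,v_{i,k}$, so $g$ acts on $(v_{i,1},\dots,v_{i,n_i})$ by the very matrix by which it acts on $(x_{m_{i-1}+1},\dots,x_{m_i})$.

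With equivariance established, I would introduce the $\ff_q$-algebra map $\psi_i\colon\ff_q[z_1,\dots,z_{n_i}]\to P_i$, $z_j\mapsto v_{i,j}$. It is an isomorphism, since the $v_{i,j}$ are algebraically independent (they freely generate $P^{U_I}=\ten_i P_i$); it is $G_i$-equivariant by the previous step; and it multiplies degrees by $q^{m_{i-1}}=\deg v_{i,j}$. Applying $\psi_i$ to $\ff_q[z_1,\dots,z_{n_i}]^{G_i}=\ff_q[e_{i,1},\dots,e_{i,n_i}]$ gives $P_i^{G_i}=\ff_q[u_{i,1},\dots,u_{i,n_i}]$, a polynomial algebra whose generators $u_{i,j}=e_{i,j}(v_{i,1},\dots,v_{i,n_i})$ have degree $\alpha_{ij}q^{m_{i-1}}$.

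Finally, by the preceding lemma $P^{G}=\ten_{i=1}^l P_i^{G_i}$; as the full set $\{v_{i,j}\}$ is algebraically independent, the factors involve disjoint generators and their tensor product is again polynomial, namely $\ff_q[u_{i,j}\mid 1\le i\le l,\ 1\le j\le n_i]$. The Hilbert series is then the standard product $\prod_{i,j}\bigl(1-t^{\deg u_{i,j}}\bigr)^{-1}$ over all generators, which is exactly the asserted formula. The remaining steps beyond the linearized-polynomial computation are formal consequences of the preceding lemma and of the description of $P^{U_I}$.
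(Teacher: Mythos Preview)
Your proof is correct and follows essentially the same route as the paper: the proposition is stated as a direct corollary of the preceding lemma $P^{G_I}=\ten_{i=1}^l P_i^{G_i}$, and the paper's (suppressed) argument simply invokes the tensor decomposition and the hypothesis on $\ff_q[x_1,\dots,x_{n_i}]^{G_i}$. Your explicit verification of the $G_i$-equivariance via the $\ff_q$-linearity of the polynomial $\phi_i(y)=\prod_{c\in C_i}(y+c)$ supplies the one step the paper leaves implicit.
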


\section{Mui, Ming-T\`{u}ng and Wan-Wang Invariants of $ \mc{A} $}
In this section, we will recall the work of Mui, Ming-T\`{u}ng and Wan-Wang invariants in $ \mc{A}. $ 

\subsection{Mui invariants of $ \mc{A} $}
By \cite[equation (1.4)]{Mu}, let $ A=(a_{ij}) $ be a $ n\times n $ matrix with entries in a possibly noncommutative ring $ R $. Define the (row) determinant of $ A $:
\[ |A|=\det(A)=\sum_{\sigma\in S_n} \text{sgn}(\sigma) a_{1\sigma(1)} \cdots a_{n\sigma(n)}. \]

Recall that $ ab=(-1)^{\on{d}(a)\on{d}(b)}ba $ for all $ a,b \in \mc{A}. $ One must revise the usual computation of the determinants in this case.
For example, one can compute easily from the definition that, in the algebra
$ E_{\fz}(y_1,\cdots,y_n)=\ds_{J\in \mathds{B}(n)}\fz y_J $ over $ \fz $, 
\[  \frac{1}{n!}\left| 
\begin{matrix}y_1 & y_2 & \cdots & y_n\\
y_1 & y_2 & \cdots & y_n\\
\vdots & \vdots & \ddots & \vdots\\ 
y_1 & y_2 & \cdots & y_n
\end{matrix}
\right|=y_1\cdots y_n ,  \ 
%\text{ and }
  \left| 
\begin{matrix}y_1 & y_1 & \cdots & y_1\\
y_2 & y_2 & \cdots & y_2\\
\vdots & \vdots & \ddots & \vdots\\ 
y_n & y_n & \cdots & y_n
\end{matrix}
\right|=0.\qquad\quad\quad   \]

Suppose $ 1\leq j\leq m\leq n,$ and let $ (b_1,\cdots,b_j) $ be a sequence of integers such that $ 0\leq b_1<\cdots< b_j\leq m-1. $ Define $ M_{m;b_1,\cdots,b_j} \in \mc{A} $ by the following determinant of $ m\times m $ matrix
\begin{equation}\label{Mui invariant}
M_{m;b_1,\cdots,b_j} =\frac{1}{j!}  \left| 
\begin{matrix}y_1 & y_2 & \cdots & y_m\\
\vdots & \vdots & \vdots & \vdots\\ 
y_1 & y_2 & \cdots & y_m\\
x_1 & x_2 & \cdots & x_m\\
\vdots & \vdots & \vdots & \vdots\\ 
\widehat{x_1^{q^{b_i}}} & \widehat{x_2^{q^{b_i}}} & \cdots & \widehat{x_m^{q^{b_i}}}\\
\vdots & \vdots & \vdots & \vdots\\ 
x_1^{q^{m-1}} & x_2^{q^{m-1}} & \cdots & x_m^{q^{m-1}}
\end{matrix}
\right|
\begin{matrix}
	\big\uparrow\\
	j \text{ rows}\\
	\big\downarrow\\
	\\
	\big\uparrow\\
	\big|\\
	m-j \text{ rows}\\
	\big|\\
	\big\downarrow
\end{matrix}.
\end{equation}

Let $ U_n(q) $ be the subgroup of $ \GL_n(q) $ consisting of all upper triangular matrices. 

By \cite[Theorem 4.8, Theorem 4.17, Theorem 5.6]{Mu},
	\[ \mc{A}^{\SL_n(q)}= \ff_q[L_n,Q_{n,1}, \cdots, Q_{n,n-1}] \ds \sum_{j=1}^{n}\sum_{0\leq b_1<\cdots<b_j\leq n-1} M_{n;b_1,\cdots,b_j} \ff_q[L_n,Q_{n,1}, \cdots, Q_{n,n-1}],   \]
	\[ \mc{A}^{\GL_n(q)}= \ff_q[Q_{n,0}, \cdots, Q_{n,n-1}] \ds \sum_{j=1}^{n}\sum_{0\leq b_1<\cdots<b_j\leq n-1} M_{n;b_1,\cdots,b_j}L_n^{q-2} \ff_q[Q_{n,0}, \cdots, Q_{n,n-1}],   \]
\begin{equation}\label{A^ U_n}
\mc{A}^{\on{U}_n(q)}= \ff_q[V_1, \cdots, V_n] \ds \sum_{k=1}^{n} \sum_{s=k}^{n} \sum_{0\leq b_1<\cdots<b_k= s-1} M_{s;b_1,\cdots,b_k} \ff_q[V_1, \cdots, V_n].  
\end{equation}

\subsection{$ \GL_I(q) $-Invariants of Minh-T\`{u}ng and Wan-Wang of $ \mc{A} $}
For $ 1\leq i\leq l, $ define $ \theta _i $ by letting
\[ \theta_i=L_{n_i}(v_{i,1},v_{i,2},\cdots, v_{i,n_i}). \]

The following result in the case $ q=p $ is \cite[Theorem 3]{MT} and in general $ q $ is \cite[Theorem 3.1]{WW}.
\begin{theorem}
	$ \mc{A}^{\GL_I(q)} $ is a free $ \mc{P}^{\GL_I(q)} $ module of rank $ 2^n, $ with a basis consisting of $ 1 $ and $ M_{m_i;b_1,\cdots,b_j}\theta_1^{q-2}\cdots \theta_i^{q-2} $ for $ 1\leq i\leq l,1\leq j\leq m_i $ and $ 0\leq b_1<\cdots < b_j\leq m_i-1,$  $ b_j\geq m_{i-1}. $ Namely,
	\[ \mc{A}^{\GL_I(q)}= \mc{P}^{\GL_I(q)} \ds \sum_{j=1}^{n}\sum_{m_i\geq j}\sum_{\substack{
			0\leq b_1<\cdots<b_j\leq m_i-1\\m_{i-1}\leq b_j}
	} M_{m_i;b_1,\cdots,b_j}\theta_1^{q-2}\cdots \theta_i^{q-2}\mc{P}^{\GL_I(q)}.   \]
\end{theorem}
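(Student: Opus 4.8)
The plan is to reduce the statement to Mui's description of $\mc{A}^{\GL_n(q)}$ by exploiting the semidirect decomposition $\GL_I=L_I\ltimes U_I$, so that $\mc{A}^{\GL_I}=(\mc{A}^{U_I})^{L_I}$, and to pin down freeness and the rank by combining Theorem 1.1 with the Kuhn--Mitchell series $H(P^{\GL_I},t)$. First I would check that each proposed generator is $\GL_I$-invariant, and here there is a clean uniform mechanism. Since $\GL_I$ is block-upper-triangular it preserves the subspace spanned by $x_1,\dots,x_{m_i}$, and since its matrix entries lie in $\ff_q$ they are fixed by the Frobenius $q$-th power; hence every row of the determinant \eqref{Mui invariant} --- the $y$-rows and each Frobenius row $x_\bullet^{q^b}$ alike --- transforms under $g\in\GL_I$ by one and the same column substitution, namely the top-left $m_i\times m_i$ corner of $g$. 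Consequently $M_{m_i;b_1,\cdots,b_j}$ is a relative invariant transforming by the character $g\mapsto\prod_{k=1}^{i}\det(g_k)$, where $g_k$ denotes the $k$-th diagonal block. Likewise the Dickson top form $\theta_k$ transforms by $\det(g_k)$ and is $U_I$-invariant by \eqref{U_I on A}. Because $1+(q-2)=q-1\equiv 0\pmod{q-1}$, the product $M_{m_i;b_1,\cdots,b_j}\theta_1^{q-2}\cdots\theta_i^{q-2}$ transforms by $\prod_{k=1}^{i}\det(g_k)^{q-1}=1$, so it is genuinely $\GL_I$-invariant; this is the parabolic analogue of the invariance of $M_{n;\cdots}L_n^{q-2}$ in Mui's theorem.

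For spanning I would pass to $\mc{A}^{\GL_I}=(\mc{A}^{U_I})^{L_I}$ and use Theorem 1.1: $\mc{A}^{U_I}$ is free of rank $2^n$ over $P^{U_I}=\bigotimes_{i}\ff_q[v_{i,1},\dots,v_{i,n_i}]$ on a basis of Mui-type classes. The Levi $L_I=\GL_{n_1}(q)\times\cdots\times\GL_{n_l}(q)$ acts block-diagonally on the $v$-variables, so forming $\GL_{n_i}(q)$-invariants reduces, block by block, to Mui's $\GL_{n_i}(q)$ theorem, and it is precisely this step that forces the $\det$-twist $\theta_i^{q-2}$; assembling the twists across $i=1,\dots,l$ produces exactly the displayed generating set. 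Linear independence over $P^{\GL_I}$ would then follow from the triangularity of the Mui classes with respect to the exterior monomials $y_J$ and to the Dickson generators, as in Mui's original independence argument.

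To confirm the rank I would use the combinatorics of the index set: the generators, together with $1$, are in bijection with the subsets $\{b_1<\cdots<b_j\}\subseteq\{0,1,\dots,n-1\}$, the largest element $b_j$ determining the block $i$ through $m_{i-1}\leq b_j\leq m_i-1$, which gives exactly $2^n$ of them. A Hilbert series comparison then finishes the argument: one reads off the degrees of the generators from $\deg M_{m_i;b_1,\cdots,b_j}$ and $\sum_{k\leq i}\deg\theta_k$, and checks that $H(P^{\GL_I},t)$ times the resulting sum $\sum t^{\deg}$ equals the series of $\mc{A}^{\GL_I}$ coming from the free $P^{U_I}$-structure of $\mc{A}^{U_I}$ after passing to $L_I$-invariants; equality of series together with the established invariance and independence forces the generating set to be a free basis.

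The step I expect to be the main obstacle is the bookkeeping that couples the blocks: a class $M_{m_i;b_1,\cdots,b_j}$ mixes $x$- and $y$-variables of index up to $m_i$ across all of the blocks $1,\dots,i$, so the reduction to Mui is not a clean tensor factorization. Isolating which classes actually survive the passage to $L_I$-invariants --- which is what the constraint $b_j\geq m_{i-1}$ encodes, selecting the head in block $i$ --- and verifying that $\theta_1^{q-2}\cdots\theta_i^{q-2}$ is the unique correction rendering each class invariant is where the real effort lies, and where I would lean most heavily on the determinant-character computation above and on Mui's rank count.
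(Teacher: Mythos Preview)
The paper does not give its own proof of this theorem: it is stated in Section~4.2 as a citation of Minh--T\`ung \cite{MT} (case $q=p$) and Wan--Wang \cite{WW} (general $q$), and serves as background that the paper then generalizes. So strictly speaking there is no ``paper's proof'' to compare against.

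That said, your strategy --- pass to $\mc{A}^{\GL_I}=(\mc{A}^{U_I})^{L_I}$ via Theorem~1.1, then take $L_I$-invariants block by block --- is exactly the route the paper develops in Sections~5--6 to prove its generalizations, and specializing that machinery to $G_i=\GL_{n_i}(q)$ for all $i$ (Theorem~\ref{6.3} with $a=l$) recovers precisely this statement. In that sense your outline and the paper's framework coincide.

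Where your sketch is vaguer than the paper is the step you yourself flag: the cross-block coupling. You describe it as ``reducing to Mui's $\GL_{n_i}(q)$ theorem block by block,'' but the paper does \emph{not} reduce to Mui's theorem. Instead, Proposition~\ref{5.6} runs a direct divisibility argument: for the leading term $f_{S^*}$ of an $\SL_{n_i}$-invariant $f_2$, one applies specific transvections $E+E_{a,r}\in G_i$ to force $x_a\mid f_{S^*}$, then Lemma~\ref{ui-invarint} upgrades this to $V_a\mid f_{S^*}$, and Corollary~\ref{action on V's} absorbs those $V$-factors to rewrite $f_2$ in the $N_{m_i,S}$ basis. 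The passage from $\SL_{n_i}$- to $\GL_{n_i}$-invariants (producing the twist $\theta_i^{q-2}$) is then deferred to the Wan--Wang argument. So the mechanism is a parallel to Mui's proof rather than an invocation of it; your character computation for invariance is correct, but it does not by itself yield the spanning step.

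Finally, the paper does not use a Hilbert-series comparison for freeness; linear independence is obtained directly from Proposition~\ref{base}. Your Hilbert-series route would also work, but is an alternative rather than the paper's method.
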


\section{$ U_I $-invariants of $ \mc{A} $}
In this section, we will investigate the $ U_I $ (cf. notation (\ref{G})) invariants of $ \mc{A} $ which generalize Mui's invariants (cf. formula (\ref{A^ U_n})).

Let $ 1\leq b\leq n $ and $ S=( s_1,\cdots,s_k,a_1,\cdots,a_t )\in\mathds{B}_{k+t} $ such that $ s_k\leq b<a_1. $ %Hence, $ 1\leq s_1<\cdots<s_k\leq b<a_1<\cdots<a_t\leq n. $ 

If $ S\neq \emptyset, $ define 
\[ N_{b,S} :=\frac{1}{(k+t)!}  \left| 
\begin{matrix}y_1 & \cdots & y_b & y_{a_1} & y_{a_2} & \cdots & y_{a_t}\\
\vdots & \vdots & \vdots & \vdots& \vdots & \vdots & \vdots\\ 
y_1 & \cdots & y_b & y_{a_1} & y_{a_2} & \cdots & y_{a_t}\\
x_1 &  \cdots & x_b & x_{a_1} & x_{a_2} & \cdots & x_{a_t}\\
\vdots & \vdots & \vdots & \vdots& \vdots & \vdots & \vdots\\ 
\widehat{x_1^{q^{s_1-1}}}  & \cdots & \widehat{x_b^{q^{s_1-1}}} & \widehat{x_{a_1}^{q^{s_1-1}}} & \widehat{x_{a_2}^{q^{s_1-1}}} & \cdots & \widehat {x_{a_t}^{q^{s_1-1}}}\\
\vdots & \vdots & \vdots & \vdots& \vdots & \vdots & \vdots\\ 
\widehat{x_1^{q^{s_k-1}}}  & \cdots & \widehat{x_n^{q^{s_k-1}}} & \widehat{x_{a_1}^{q^{s_k-1}}} & \widehat{x_{a_2}^{q^{s_k-1}}} & \cdots & \widehat {x_{a_t}^{q^{s_k-1}}}\\
\vdots & \vdots & \vdots & \vdots& \vdots & \vdots & \vdots\\ 
x_1^{q^{b-1}} &  \cdots & x_b^{q^{b-1}} & x_{a_1}^{q^{b-1}} & x_{a_2}^{q^{b-1}} & \cdots & x_{a_t}^{q^{b-1}}
\end{matrix}
\right|
\begin{matrix}
\big\uparrow\\
k+t \text{ rows}\\
\big\downarrow\\

\big\uparrow\\
\big|\\
\big|\\
\big|\\
b-k \text{ rows }\\
\big|\\
\big|\\
\big|\\
\big\downarrow
\end{matrix}.
 \]
For convenience, define $ N_{b,\emptyset}:=1. $
Sometimes, we denote $ N_{b,S} $ as $N^{\un{s}}_{b,\un{a}}, $ where $ \un{s}=(s_1,\cdots,s_k)\in \mathds{B}_k $ and $ \un{a}=(a_1,\cdots,a_t)\in\mathds{B}_t. $  Then $ N_{b,S}\in S^{u}(V)\ten \wedge^{k+t}(V), $ where $ u=\frac{q^b-1}{q-1}-\sum_{i=1}^k q^{s_i-1}. $

\begin{remark}
	Suppose $ 1\leq j\leq m\leq n,$ and $ B=(b_1+1,\cdots, b_j+1)\in \mathds{B} _j $ such that $ 0\leq b_1<\cdots< b_j\leq m-1. $ Then $ N_{m,B}=M_{m;b_1,\cdots,b_j} $ (cf. formula (\ref{Mui invariant})). 
\end{remark}

For $ J=(1,\cdots,b,a_1,\cdots a_t)\in \mathds{B}_ {b+t}, $ one can check by definition that 
\begin{equation}\label{pure odd}
N_{b,J}=y_J=y_1\cdots y_by_{a_1}\cdots y_{a_t}.
\end{equation}

For $ 1\leq b<a\leq n, $ denote 
$$ V_{b,a}=L_{b+1}(x_1,\cdots,x_b,x_a)/L_b(x_1,\cdots, x_b) = \prod_{\lambda_1,\cdots, \lambda_{b}\in\ff_q} (\lambda_1x_1+\cdots \lambda_{b}x_{b}+x_{a}). $$
Then $ v_{i,j}=V_{m_{i-1}, m_{i-1}+j} $ by formula (\ref{vij}).

\begin{lemma}\label{4.1}
	Suppose $ \un{s}=( s_1,\cdots,s_k)\in\mathds{B}_{k}$ and $\un{a}=(a_1,\cdots,a_t )\in\mathds{B}_{t} $ such that $ s_k\leq b<a_1. $
	
$ \on{(1)} $ If $ b+1<a_1, $ then
$$ N^{\un{s}}_{b,\un{a}}\cdot V_{b+1}=(-1)^t N^{\un{s}}_{b+1,\un{a}}+ \sum_{i=1}^{t}(-1) ^{i+1} N^{\un{s}+\{b+1\}}_{b+1,\un{a}- \{a_i\}} V_{b,a_i}$$
$$\quad + \sum_{j=1}^{k}(-1) ^{k+t+j} N^{\un{s}+\{b+1\}-\{s_j\}}_{b+1,\un{a}} Q_{b,s_j}. $$

$ \on{(2)} $ If $ b+1=a_1, $ i.e. $ b=a_1-1, $ then we have
$$ N^{\un{s}}_{b,\un{a}}\cdot V_{a_1+1}=(-1)^{t-1} N^{\un{s}+\{a_1\}}_{a_1+1,\un{a}- \{a_1\}} + \sum_{i=2}^{t}(-1) ^{i} N^{\un{s}+\{ a_1,a_1+1 \}}_{a_1+1,\un{a}- \{ a_1,a_i \}} V_{a_1,a_i}$$
$$+ \sum_{j=1}^{k}(-1) ^{k+t+j} N^{\un{s}+\{ a_1,a_1+1\}- \{s_j\}}_{a_1+1, \un{a}-\{ a_1 \}}Q_{a_1,s_j}.\quad $$
\end{lemma}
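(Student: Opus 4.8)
The plan is to realize the product $N_{\un{s}:b;\un{a}}\cdot V_{b+1}$ as a single bordered determinant and then evaluate that determinant in two different ways. Concretely, I would form the $(b+t+1)\times(b+t+1)$ matrix $D$ obtained from the matrix defining $N_{\un{s}:b;\un{a}}$ by (i) inserting in position $b+1$ the column headed by $x_{b+1}$ (with $y_{b+1}$ in the $y$-rows), and (ii) adjoining at the bottom one extra row whose entry in the column of the variable $x_c$ is $V_{b,c}$, the whole matrix still carrying the factor $1/(k+t)!$. Two facts make this work: first, $V_{b,c}=0$ whenever $c\le b$, since one factor $\lambda_1x_1+\cdots+\lambda_bx_b+x_c$ vanishes because $x_c$ lies in $\langle x_1,\dots,x_b\rangle$, while $V_{b,b+1}=V_{b+1}$; and second, the additive (Dickson) expansion $V_{b,c}=\sum_{i=0}^{b}(-1)^{b+i}Q_{b,i}x_c^{q^i}$ with $Q_{b,b}=1$, which is the Laplace expansion of the Moore determinant $L_{b+1}(x_1,\dots,x_b,x_c)=L_b\cdot V_{b,c}$ along its last column.

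The first computation expands $D$ along its adjoined bottom row. Because $V_{b,c}=0$ for $c\le b$, only the columns indexed by $b+1,a_1,\dots,a_t$ contribute. The minor at the column $b+1$ is the matrix of $N_{\un{s}:b;\un{a}}$ (deleting that column and the bottom row restores the original matrix), while the minor at the column $a_i$ is the matrix of $N_{\un{s}\cup\{b+1\}:b+1;\un{a}\xg\{a_i\}}$. With the cofactor signs $(-1)^{\text{row}+\text{col}}$ this gives $D=(-1)^tV_{b+1}N_{\un{s}:b;\un{a}}+\sum_{i}(-1)^{t+i}V_{b,a_i}N_{\un{s}\cup\{b+1\}:b+1;\un{a}\xg\{a_i\}}$. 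The second computation substitutes the Dickson expansion into the bottom row and uses multilinearity: $D=\sum_{i=0}^{b}(-1)^{b+i}Q_{b,i}D_i$, where $D_i$ is $D$ with its bottom row replaced by the row of $q^i$-th powers. Each $D_i$ whose exponent $q^i$ already occurs among the $x$-rows has two equal rows and vanishes; the survivors are $i=b$, giving $N_{\un{s}:b+1;\un{a}}$, and $i=s_j-1$, giving $N_{\un{s}\cup\{b+1\}\xg\{s_j\}:b+1;\un{a}}$ after the inserted power-row is transposed into its correct place in the increasing exponent order.

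Equating the two expressions for $D$ and solving for $V_{b+1}N_{\un{s}:b;\un{a}}$ produces formula (1). Along the way I would check that Laplace expansion, multilinearity in one row, and the vanishing of a determinant with two equal rows all remain valid in the superalgebra $\mc{A}$: the adjoined row, the power-rows, and the scalars $V_{b,a_i},Q_{b,i}$ are all even, so they commute past everything, and a determinant with two equal even rows vanishes over any commutative ring, hence in every characteristic. The factor $1/(k+t)!$ cancels because every minor involved still has exactly $k+t$ identical $y$-rows. Part (2), the case $b+1=a_1$, is handled by the same bordered-determinant device, except that the would-be new column $x_{b+1}$ now coincides with the existing column $x_{a_1}$; instead one adjoins the column $x_{a_1+1}$ and uses the bottom row with entries $V_{a_1,c}$ (which vanishes for $c\le a_1$, so that $a_1$ is driven from $\un{a}$ into $\un{s}$ in every surviving term), together with the expansion of $V_{a_1,c}$.

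I expect the only real difficulty to be the sign bookkeeping, and it is genuinely the crux in part (2). The signs have three independent sources --- the cofactor signs $(-1)^{\text{row}+\text{col}}$, the transposition sign $(-1)^{\#\{x\text{-rows of larger exponent}\}}$ incurred when the inserted power-row is sorted into exponent order, and the signs carried by the Dickson expansion of $V_{b,c}$ --- and reconciling their product with the stated $(-1)^t$, $(-1)^{i+1}$, $(-1)^{k+t+j}$ forces one to fix once and for all the sign and index convention for $Q_{b,i}$ (the offset between the value $s_j$ and the omitted exponent $s_j-1$ enters precisely here). In case (2) the coincidence $a_1=b+1$ changes which power-rows survive the multilinear expansion and shifts every one of these combinatorial counts, so the matching must be redone from scratch rather than quoted from (1).
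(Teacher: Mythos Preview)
Your bordered-determinant argument is correct, but it is organized differently from the paper's proof. The paper writes down a single $(2b+t+1)\times(2b+t+1)$ block upper-triangular determinant whose first $b$ columns carry a copy of the Moore matrix in $x_1,\dots,x_b$ (rows $q^0,\dots,q^b$) and whose remaining $b+t+1$ columns carry the enlarged $N$-matrix with the extra column $x_{b+1}$. This determinant is then evaluated twice by Laplace expansion, once along the first $b+1$ rows and once along the first $b$ columns: the first expansion produces the terms $L_{b+1}\,N_{\un{s}:b;\un{a}}$ and $L_{b+1}(x_1,\dots,x_b,x_{a_i})\,N_{\un{s}\cup\{b+1\}:b+1;\un{a}\setminus\{a_i\}}$, while the second produces $L_b\,N_{\un{s}:b+1;\un{a}}$ and $L_{b,s_j}\,N_{\un{s}\cup\{b+1\}\setminus\{s_j\}:b+1;\un{a}}$. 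Dividing through by $(-1)^bL_b$ then gives formula~(1), and an analogous block determinant with the column $x_{a_1+1}$ in place of $x_{b+1}$ handles~(2). Your construction with bottom row $V_{b,c}=L_{b+1}(x_1,\dots,x_b,x_c)/L_b$ in effect performs that division in advance: your Laplace expansion along the bottom row is the paper's first expansion divided by $L_b$, and your substitution of the Dickson expansion together with the equal-rows vanishing is the paper's second expansion divided by $L_b$. The paper's route is a bit more uniform (only Laplace, no separate appeal to the Dickson identity for $V_{b,c}$), whereas yours works in a matrix of half the size and avoids the final division; the sign bookkeeping you single out as the crux is equally present in both arguments.
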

\begin{proof}
(1) We consider the following determinant :
\[D_1= \frac{1}{(k+t)!}  \left| 
\begin{matrix}
x_1 &  \cdots & x_b &  x_1 & \cdots  & x_{b+1} & x_{a_1} & x_{a_2} & \cdots & x_{a_t}\\
\vdots & \vdots & \vdots & \vdots  & \vdots & \vdots & \vdots& \vdots & \vdots & \vdots\\ 
x_1^{q^b} &  \cdots & x_b^{q^b} & x_1^{q^b} &  \cdots  & x_{b+1}^{q^b} & x_{a_1}^{q^b} & x_{a_2}^{q^b} & \cdots & x_{a_t}^{q^b}\\
0&\cdots&0 & y_1 & \cdots  & y_{b+1} & y_{a_1} & y_{a_2} & \cdots & y_{a_t}\\
\vdots&\vdots&\vdots& \vdots & \vdots & \vdots & \vdots& \vdots & \vdots & \vdots\\
\vdots&\vdots&\vdots&  y_1 & \cdots  & y_{b+1} & y_{a_1} & y_{a_2} & \cdots & y_{a_t}\\
\vdots&\vdots&\vdots & x_1 &  \cdots  & x_{b+1} & x_{a_1} & x_{a_2} & \cdots & x_{a_t}\\
\vdots&\vdots&\vdots & \vdots  & \vdots & \vdots & \vdots& \vdots & \vdots & \vdots\\ 
\vdots&\vdots&\vdots & \widehat{x_1^{q^{s_1-1}}}  & \cdots  & \widehat{x_{b+1}^{q^{s_1-1}}} & \widehat{x_{a_1}^{q^{s_1-1}}} & \widehat{x_{a_2}^{q^{s_1-1}}} & \cdots & \widehat {x_{a_t}^{q^{s_1-1}}}\\
\vdots&\vdots&\vdots & \vdots  & \vdots & \vdots & \vdots& \vdots & \vdots & \vdots\\ 
\vdots&\vdots&\vdots & \widehat{x_1^{q^{s_k-1}}}  & \cdots  & \widehat{x_{b+1}^{q^{s_k-1}}} & \widehat{x_{a_1}^{q^{s_k-1}}} & \widehat{x_{a_2}^{q^{s_k-1}}} & \cdots & \widehat {x_{a_t}^{q^{s_k-1}}}\\
\vdots&\vdots&\vdots & \vdots  & \vdots & \vdots & \vdots& \vdots & \vdots & \vdots\\ 
0&\cdots&0 & x_1^{q^{b-1}} &  \cdots  & x_{b+1}^{q^{b-1}} & x_{a_1}^{q^{b-1}} & x_{a_2}^{q^{b-1}} & \cdots & x_{a_t}^{q^{b-1}}
\end{matrix}
\right|  
\begin{matrix}
\big\uparrow\\
b+1 \text{ rows}\\
\big\downarrow\\
\\
\big\uparrow\\
k+t \text{ rows}\\
\big\downarrow\\
\\
\big\uparrow\\
\big|\\
\big|\\
\big|\\
b-k \text{ rows }\\
\big|\\
\big|\\
\big|\\
\big\downarrow
\end{matrix}
\]
By the Laplace expansion, we can expand $ D_1 $ by the first $ b+1 $ rows. Hence,
\[ D_1= (-1)^bL_{b+1}N^{\un{s}}_{b,\un{a}}+  \sum_{i=1}^{t}(-1) ^{b+i} L_{b+1}(x_1,\cdots,x_b,x_{a_i})N^{\un{s}+\{b+1\}}_{b+1,\un{a}- \{a_i\}}. \]
Similarly, by expanding the first $ b $ columns, 
\[D_1 = (-1)^{b+t}L_{b}N^{\un{s}}_{b+1,\un{a}}+ \sum_{i=1}^{k}(-1)^{b+1-s_i}L_{b,s_i}\cdot (-1) ^{k+t+s_i -(i-1)} N^{\un{s}+\{ b+1\}- \{s_j\}}_{b+1, \un{a}}. \]
Combining above equations and divide $ (-1)^bL_b(x_1,\cdots, x_b) $ on both side. Statement (1) holds.

(2) Now we consider the following determinant:
\[ D_2=\frac{1}{(k+t)!}  \left| 
\begin{matrix}
x_1 &  \cdots & x_{a_1} &  x_1 & \cdots & x_{a_1} & x_{a_1+1}  & x_{a_2} & \cdots & x_{a_t}\\
\vdots & \vdots & \vdots & \vdots & \vdots  & \vdots & \vdots& \vdots & \vdots & \vdots\\ 
x_1^{q^{a_1}} &  \cdots & x_{a_1}^{q^{a_1}} & x_1^{q^{a_1}} &  \cdots & x_{a_1}^{q^{a_1}} & x_{a_1+1}^{q^{a_1}}  & x_{a_2}^{q^{a_1}} & \cdots & x_{a_t}^{q^{a_1}}\\
0&\cdots&0& y_1 & \cdots & y_{a_1} & y_{a_1+1}  & y_{a_2} & \cdots & y_{a_t}\\
\vdots&\vdots&\vdots& \vdots & \vdots & \vdots  & \vdots& \vdots & \vdots & \vdots\\
\vdots&\vdots&\vdots&  y_1 & \cdots & y_{a_1} & y_{a_1+1}  & y_{a_2} & \cdots & y_{a_t}\\
\vdots&\vdots&\vdots& x_1 &  \cdots & x_{a_1} & x_{a_1+1}  & x_{a_2} & \cdots & x_{a_t}\\
\vdots&\vdots&\vdots & \vdots & \vdots & \vdots & \vdots& \vdots & \vdots & \vdots\\ 
\vdots&\vdots&\vdots& \widehat{x_1^{q^{s_1}}}  & \cdots & \widehat{x_{a_1}^{q^{s_1}}} & \widehat{x_{a_1+1}^{q^{s_1}}}  & \widehat{x_{a_2}^{q^{s_1}}} & \cdots & \widehat {x_{a_t}^{q^{s_1}}}\\
\vdots&\vdots&\vdots & \vdots & \vdots & \vdots  & \vdots& \vdots & \vdots & \vdots\\ 
\vdots&\vdots&\vdots & \widehat{x_1^{q^{s_k}}}  & \cdots & \widehat{x_{a_1}^{q^{s_k}}} & \widehat{x_{a_1+1}^{q^{s_k}}}  & \widehat{x_{a_2}^{q^{s_k}}} & \cdots & \widehat {x_{a_t}^{q^{s_k}}}\\
\vdots&\vdots&\vdots & \vdots & \vdots & \vdots & \vdots& \vdots & \vdots & \vdots\\ 
0&\cdots&0 & x_1^{q^{a_1}} &  \cdots & x_{a_1}^{q^{a_1-2}} & x_{a_1+1}^{q^{a_1-2}} & x_{a_2}^{q^{a_1-2}} & \cdots & x_{a_t}^{q^{a_1-2}}
\end{matrix}
\right|  
\begin{matrix}
\big\uparrow\\
a_1+1 \text{ rows}\\
\big\downarrow\\
\\
\big\uparrow\\
k+t \text{ rows}\\
\big\downarrow\\
\\
\big\uparrow\\
\big|\\
\big|\\
\big|\\
a_1-1-k \text{ rows }\\
\big|\\
\big|\\
\big|\\
\big\downarrow
\end{matrix}.
\]
Similar to the proof of statement (1), by expanding the first $ b+1 $ rows,
\[ D_2=(-1)^{a_1}L_{a_1+1} N^{\un{s}}_{b,\un{a}}+  \sum_{i=2}^{t}(-1) ^{a_1+i+1} L_{a_1+1}(x_1,\cdots,x_{a_1},x_{a_i})N^{\un{s}+\{a_1,a_1+1\}}_{a_1+1,\un{a}- \{a_1,a_i\}}. \]
By expanding the first $ b $ columns,
\[ D_2= (-1)^{a_1-1+t}L_{a_1}N^{\un{s}+\{a_1\}}_{a_1+1,\un{a}- \{a_1\}}+ \sum_{j=1}^{k}(-1)^{a_1+1+k+t-(j-1)}L_{a_1,s_j}N^{\un{s}+\{a_1,a_1+1\}- \{s_j\}}_{a_1+1, \un{a}-\{a_1\}}. \]
Combining them and divide $ (-1)^{a_1}L_{a_1} $ on both side. Statement (2) holds.
\end{proof}
By direct computation, one have the following result.
\begin{corollary}
	For $ J=( j_1,\cdots,j_t)\in \mathds{B}_t $ and $ 1\leq b< j_t, $ we have
	\begin{enumerate}
		\item $ N_{\tau (J),J} $ is $ U_I $-invariant.
		\item If $ b\neq j_s-1, $ for all $ s=1,\cdots,t, $ then $$ N_{b,J}\cdot V_{b+1} =\epsilon N_{b+1,J}+\sum_{i=1}^{t} g_iN_{b+1, J+ \{b+1\}- \{j_i\}}  $$
		where $ \epsilon\in\{ \pm1 \} $ and $ g_i\in \mc{P}^{U_I}. $
		
\noindent		If $ b=j_s-1, $ for some $ s=1,\cdots,t, $ then $$ N_{b,J}\cdot V_{b+1} =\epsilon N_{b+1,J}+\sum_{i=1}^{t} g_iN_{b+1, J+ \{j_s+1\}- \{j_i\}}  $$
		where $ \epsilon\in\{ \pm1 \} $ and $ g_i\in \mc{P}^{U_I}. $
	\end{enumerate}
\end{corollary}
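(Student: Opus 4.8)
The plan is to prove (1) directly from the determinant defining $N_{\tau(J),J}$ and to obtain (2) by transcribing Lemma~\ref{4.1} into the $N_{b,J}$ notation.

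For (1), I would check invariance of $N_{m_j,J}$ (write $\tau(J)=m_j$, so $m_j<j_t\le m_{j+1}$) under a generating set of $U_I$, namely the elementary transvections $x_s\mapsto x_s+c\,x_p$, $y_s\mapsto y_s+c\,y_p$ with $c\in\ff_q$, $s$ in block $i$ and $p\le m_{i-1}$. The columns of the defining determinant are indexed by $\Gamma=\{1,\dots,m_j\}\cup\{a\in J: a>m_j\}$, and since $j_t\le m_{j+1}$ every element of $J$ above $m_j$ lies in block $j+1$. A generator replaces column $s$ by column $s$ plus $c$ times column $p$ (using $(x_s+cx_p)^{q^r}=x_s^{q^r}+c\,x_p^{q^r}$, as $c^{q^r}=c$), so by multilinearity $g\cdot N_{m_j,J}=N_{m_j,J}+c\,D$, where $D$ is the same determinant with column $s$ overwritten by the data of column $p$. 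The point is that if $s\in\Gamma$ then $p\in\Gamma$: if $s\le m_j$ then $p\le m_{i-1}<s\le m_j$, and if $s$ is one of the $a$'s then $s$ sits in block $j+1$, whence $p\le m_j$. In both cases column $p$ already occurs, $D$ has two equal columns and vanishes, so $g$ fixes $N_{m_j,J}$. That $\tau(J)$ is exactly the block boundary $m_j$ is what makes $\Gamma$ stable under the transvections; this is the heart of (1).

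For (2), I would split $J=(s_1,\dots,s_k,a_1,\dots,a_{t'})$ at $b$ (so $s_k\le b<a_1$) and read off Lemma~\ref{4.1}. If $b+1\notin J$ then $b+1<a_1$ and Lemma~\ref{4.1}(1) applies directly: the leading term $(-1)^{t'}N_{\un{s}:b+1;\un{a}}$ equals $(-1)^{t'}N_{b+1,J}$ (giving $\epsilon=(-1)^{t'}$), while the remaining two sums are precisely the terms $N_{b+1,\,J\cup\{b+1\}\setminus\{j_i\}}$, carrying coefficients $\pm V_{b,a_i}$ when $j_i>b$ and $\pm Q_{b,s_j}$ when $j_i\le b$. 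If $b+1\in J$ then $b+1=a_1$ and one argues in the same way from Lemma~\ref{4.1}(2), now reconciling the two index sets by the elementary identity $N_{c,K}=N_{c+1,K}$ valid whenever $c+1\in K$ — immediate from the definition, since promoting $c+1$ from an $a$-index to an $s$-index changes neither the column set, nor the number of $y$-rows, nor the surviving $x$-power rows. The bookkeeping is heavier in this second case.

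The step I expect to be the real obstacle is verifying that the coefficients lie in $P^{U_I}$. The Dickson factors $Q_{b,s_j}$ are harmless: each generator of $U_I$ maps $\langle x_1,\dots,x_b\rangle$ into itself, hence restricts to an element of $\GL_b(q)$, and $Q_{b,s_j}$ is $\GL_b(q)$-invariant. The factors $V_{b,a_i}=\prod_{\lambda}(\lambda_1x_1+\cdots+\lambda_bx_b+x_{a_i})$ need the block structure: setting $P(Y)=\prod_{\mu\in\langle x_1,\dots,x_b\rangle}(Y+\mu)$, an additive polynomial, a generator sending $x_{a_i}\mapsto x_{a_i}+w$ turns $V_{b,a_i}=P(x_{a_i})$ into $P(x_{a_i})+P(w)$, and $P(w)=0$ exactly when $w\in\langle x_1,\dots,x_b\rangle$. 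Thus $V_{b,a_i}\in P^{U_I}$ iff every $U_I$-shift of $x_{a_i}$ stays in $\langle x_1,\dots,x_b\rangle$, i.e. iff $m_{\iota-1}\le b$ for $\iota$ the block of $a_i$; in the relevant range $\tau(J)\le b$ the $a_i$ and $b+1$ all share block $j+1$, so this — and the invariance of the multiplier $V_{b+1}$ itself — holds. Pinning down this membership, which forces one to use precisely how $U_I$ acts blockwise, is where the genuine content of (2) lies.
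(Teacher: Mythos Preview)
The paper states this as an immediate corollary of Lemma \ref{4.1} and gives no proof; your approach --- part (1) by a direct column argument on the defining determinant, part (2) by transcribing Lemma \ref{4.1} into the $N_{b,J}$ notation via the identity $N_{d-1,K}=N_{d,K}$ for $d\in K$ --- is exactly the intended one. Your observation that $V_{b,a_i}\in P^{U_I}$ genuinely requires $\tau(a_i)\le b$, so that the conclusion $g_i\in P^{U_I}$ in fact needs $b\ge\tau(J)$ (a hypothesis the corollary as written omits but which every downstream use, in Corollary \ref{action on V's} and Proposition \ref{main prop}, satisfies), is correct and worth recording.
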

\begin{remark}
	For arbitrary $ b $ and $ J $, $ N_{b,J} $ may not be $ U_I $-invariant.

\end{remark}

\begin{corollary}\label{action on V's}
	Let $ 1\leq b\leq  c\leq n $ and $ J= (j_1,\cdots,j_t)\in\mathds{B}_{t}. $ %For $ 1\leq i\leq t, $ 
	
	If $ j_i\leq b<j_{i+1}\leq j_l\leq  c< j_{l+1}, $ 
	then
	\[ N_{b,J}\cdot V_{b+1}\cdots\widehat{V_{j_{i+1}}}\cdots \widehat{V_{j_l}}\cdots V_{c}= \epsilon N_{c, J}+ \sum_{\pr{J}} N_{c,\pr{J}}f_{\pr{J}} \]
	where $ \epsilon\in\{\pm 1\}, $  $ \pr{J}\leq  (1,\cdots,j_i,c-l+i+1,\cdots,c,j_{l+1} , \cdots, j_t) $ and $ f_{\pr{J}}\in \mc{P}^{U_I}. $
\end{corollary}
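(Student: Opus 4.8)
The plan is to prove the identity by induction on $c-b$, raising the level by one unit at a time, iterating the recursion of the preceding corollary and organizing the output according to the total order on $\mathds{B}_t$.

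First I would isolate the elementary observation that makes the hatted factors disappear: if $a$ is an entry of $K\in\mathds{B}$, then $N_{a-1,K}=N_{a,K}$. Indeed, passing from level $a-1$ to level $a$ only transfers $a$ from the $\un{a}$-block to the $\un{s}$-block of $K$; this alters neither the columns of the defining determinant nor its set of $x$-power rows, because the row $x^{q^{a-1}}$ that would be created at level $a$ is simultaneously the row deleted by the enlarged $\un{s}$-block. Hence the two determinants coincide verbatim. This is exactly why $V_{j_{i+1}},\dots,V_{j_l}$ are absent from the product: crossing a level that belongs to $J$ costs no multiplication.

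With this identity the inductive step splits according to whether $b+1\in J$. If $b+1\in J$ (so $b+1=j_{i+1}$), then $N_{b,J}=N_{b+1,J}$ and the product is untouched, so the instance reduces to the one for the pair $(b+1,c)$. If $b+1\notin J$, then $b\neq j_s-1$ for every $s$, so the first case of the preceding corollary gives
\[ N_{b,J}\cdot V_{b+1}=\epsilon\, N_{b+1,J}+\sum_{\rho} g_\rho\, N_{b+1,J_\rho}, \qquad J_\rho=J\cup\{b+1\}\xg\{j_\rho\},\ g_\rho\in P^{U_I}. \]
Multiplying through by $\prod_{m\in(b+1,c]\xg J}V_m$ and feeding the leading summand $N_{b+1,J}$ into the inductive hypothesis (for which the middle indices $j_{i+1},\dots,j_l$ and the bounding sequence $J^{*}$ are unchanged) yields $\pm N_{c,J}$ together with admissible terms $N_{c,\pr{J}}$ with $\pr{J}\le J^{*}$; telescoping the intervening signs over $[b,c]$ produces the asserted $\epsilon N_{c,J}$.

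The step I expect to be the main obstacle is the control of the error terms $g_\rho N_{b+1,J_\rho}$ once they are carried up to level $c$. Two difficulties must be resolved. First, the product we carry, $\prod_{m\in(b+1,c]\xg J}V_m$, is adapted to $J$ rather than to $J_\rho$: it contains exactly one factor too many or too few relative to $\prod_{m\in(b+1,c]\xg J_\rho}V_m$, the discrepancy being the single factor $V_{j_\rho}$, so the inductive hypothesis cannot be applied verbatim and this surplus or deficit must be reabsorbed through another pass of the recursion. Second, expanding by the recursion can leave summands stranded at a level strictly below $c$; already in the smallest nontrivial cases a factor such as $V_{j_\rho}$ times a lower $N$ recombines into a genuine level-$c$ generator, and one has to show that all such recombinations close up and still respect the bound. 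The heart of the matter is the monotonicity estimate in the total order: removing an $\un{s}$-block index $j_\rho\le b$ inserts the index $b+1$, which exceeds every entry of the low block, and one must verify against the maximal run ending at $c$ in $J^{*}$ that each index set produced remains $\le J^{*}$. I would establish this by tracking, through each one-level step, the position of the newly inserted index, showing it can never overtake the $l-i$ top slots reserved for the run ending at $c$, while the low block and the high block $j_{l+1},\dots,j_t$ can only move downward in the order.
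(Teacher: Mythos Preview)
Your approach is exactly the paper's: its entire proof reads ``For any $K\in\mathds{B}(n)$ and $d\in K$, it is a direct computation that $N_{d-1,K}=N_{d,K}$. Thanks to Lemma~\ref{4.1}, one can check this corollary by induction.'' You have in fact gone further than the paper by isolating the bookkeeping issues (the mismatch between the $V$-product adapted to $J$ and the one adapted to $J_\rho$, and the monotonicity bound in $\mathds{B}_t$) that the paper leaves entirely to the reader.
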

\begin{proof}
	For any $ K\in\mathds{B}(n) $ and $ d\in K,$ it is a direct computation that  $$ N_{d-1,K}= N_{d,K}. $$ Thanks to Lemma \ref{4.1}, one can check this corollary by induction.
\end{proof}

\begin{remark}
	Note that $ J<(1,\cdots,j_i,c-l+i+1,\cdots,c,j_{l+1} , \cdots, j_t). $
\end{remark}
	
We may denote $ N_{b,s}= N_{b,S} $ if $ S=(s)\in\mathds{B}_1. $

\begin{lemma}\label{linear indepen 1}
	If $ S=(s_1,\cdots, s_k)\in \mathds{B}_k, $ and $ s_j\leq b < s_{j+1}, $ then 
	\[ N_{b,S}=(-1)^{jk-(j+1)j/2}N_{b,s_1}\cdots N_{b,s_k}/L_b^{k-1}. \]
	In particular, if $ s_i\leq \tau(S)< s_{i+1}, $ then 
	\[ N_{\tau(S),S}=(-1)^{ik-(i+1)i/2}N_{\tau(S),s_1}\cdots N_{\tau(S),s_k}/L_{\tau(S)}^{k-1}. \]	
\end{lemma}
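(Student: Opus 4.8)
The plan is to compute both sides by expanding in the exterior variables, thereby reducing the statement to a purely commutative identity among alternant (generalized Vandermonde) determinants. Write $C=\{1,\dots,b\}\cup\{s_{j+1},\dots,s_k\}$ for the column set attached to $N_{b,S}$ and $R=\{0,\dots,b-1\}\xg\{s_1-1,\dots,s_j-1\}$ for its set of exponents, so that $|C\xg T|=|R|=b-j$ for every $k$-subset $T\subseteq C$. Since the $k$ rows of $y$'s in $N_{b,S}$ are identical, a Laplace expansion along that block, combined with Mui's normalization $\frac{1}{k!}|(y\text{-block on }T)|=y_T$ (the identity quoted just before \eqref{Mui invariant}), gives $N_{b,S}=\sum_{|T|=k}\pm\,y_T\,\Delta_R(C\xg T)$, where $\Delta_R(C\xg T)$ denotes the alternant on the columns $C\xg T$ with exponent set $R$. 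On the other side each single factor has exterior degree one, say $N_{b,s_i}=\sum_{c}y_c\,A^{(i)}_c$ with $A^{(i)}_c\in P$ the corresponding signed alternant cofactor (and $A^{(i)}_c=0$ when $c$ is not a column of $N_{b,s_i}$). Multiplying the $k$ factors and using $y_cy_{c'}=-y_{c'}y_c$, $y_c^2=0$ to collect squarefree monomials, the coefficient of $y_T$ in $N_{b,s_1}\cdots N_{b,s_k}$ is exactly $\det\big(A^{(i)}_c\big)_{i\in[k],\,c\in T}$, where $[k]=\{1,\dots,k\}$. Hence the lemma is equivalent to the family of commutative identities, which I call $(\star)$:
\[
\det\big(A^{(i)}_c\big)_{i\in[k],\,c\in T}=(-1)^{jk-(j+1)j/2}\,(\pm)_T\,L_b^{\,k-1}\,\Delta_R(C\xg T),\qquad |T|=k.
\]

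To establish $(\star)$ I would induct on $k=|S|$. The base case $k=1$ is trivial, and when $S$ consists only of indices $\le b$ (so $j=k$ and there are no high columns) the cofactors $A^{(i)}_c$ are precisely the entries of the adjugate of the Dickson alternant $X=(x_c^{q^r})_{0\le r\le b-1,\,1\le c\le b}$, whose determinant is $\det X=L_b$; in that case $(\star)$ is exactly Jacobi's theorem on the minors of the adjugate, $\det\big(\on{adj}(X)_{T,J}\big)=\pm(\det X)^{k-1}\det\big(X_{R,\,C\xg T}\big)$, which recovers Mui's product formula $M_{m;b_1,\dots,b_k}=(-1)^{\binom{k}{2}}M_{m;b_1}\cdots M_{m;b_k}/L_m^{k-1}$ (the case $b=m$, $j=k$ of the Remark). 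For the general mixed case the engine of the induction is the two-term Desnanot--Jacobi/Sylvester identity
\[
N_{b,S\xg\{s_k\}}\cdot N_{b,s_k}=\epsilon\,L_b\,N_{b,S},\qquad \epsilon\in\{\pm1\},
\]
obtained by peeling off the largest index $s_k$; one proves it, in the style of Lemma \ref{4.1}, by writing down a single auxiliary determinant and evaluating its Laplace development in two ways (equivalently, by applying the classical Sylvester identity to the alternant minors produced in $(\star)$). Feeding this into the inductive hypothesis for $S\xg\{s_k\}$ and simplifying the resulting power of $L_b$ yields the product formula for $S$. The second assertion is then the specialization $b=\tau(S)$ of the first.

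A routine degree check (both sides are homogeneous in the $x$'s of degree $\frac{q^b-1}{q-1}-\sum_{i\le j}q^{s_i-1}$ and of exterior degree $k$) guarantees that $L_b^{k-1}$ divides the product and fixes the shape of the answer. The part I expect to require real care is the \emph{sign}: tracking the shuffle signs $(\pm)_T$ from the Laplace expansions, the Jacobi sign (a sum of the row and column indices), and the sign $\epsilon$ accrued at each inductive peel, and checking that their accumulation collapses to the uniform exponent $jk-(j+1)j/2$ (which specializes to $\binom{k}{2}$ when $j=k$). The other delicate point is that, unlike the pure low-index situation, the high columns $s_{j+1},\dots,s_k$ make the matrix $\big(A^{(i)}_c\big)$ genuinely mixed rather than block-triangular, so $(\star)$ in the mixed case is \emph{not} a single instance of Jacobi's theorem and really does need the inductive two-term identity above.
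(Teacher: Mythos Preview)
Your approach differs substantially from the paper's, and while the broad strategy is defensible, it is more roundabout and leaves the crucial step only sketched.

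The paper's proof never passes through the commutative identity~$(\star)$ at all. Instead it writes down \emph{one} auxiliary $(2b+1)\times(2b+1)$-type block determinant (top block: one $y$-row and all $b$ $x$-rows over the duplicated column set; bottom block: the remaining $k-1$ $y$-rows and $b-j$ $x$-rows of $N_{b,S}$) and computes its Laplace development in two ways. This produces directly a \emph{$k$-term} identity
\[
kL_bN_{b,S}=\sum_{i=1}^{j}(-1)^{k-i}N_{b,s_i}N_{b,S\setminus\{s_i\}}+\sum_{i=j+1}^{k}(-1)^{i-j+1}N_{b,s_i}N_{b,S\setminus\{s_i\}},
\]
after which the induction hypothesis applied to each $N_{b,S\setminus\{s_i\}}$ makes all $k$ summands equal (using anticommutativity of the $N_{b,s_i}$), and the factor $k$ cancels. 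The sign bookkeeping that you flag as the delicate part is carried out explicitly in this computation and collapses in one line.

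By contrast, your engine is the \emph{two}-term relation $N_{b,S\setminus\{s_k\}}\cdot N_{b,s_k}=\epsilon\,L_b\,N_{b,S}$. This relation is indeed true (it is equivalent to the lemma at the pair $(S\setminus\{s_k\},\{s_k\})$, with $\epsilon=(-1)^j$), but you do not prove it; you only say it follows ``in the style of Lemma~\ref{4.1}'' or from Sylvester. A Laplace development of an auxiliary determinant of the paper's shape does \emph{not} single out $s_k$: it naturally yields the full $k$-term sum above, because every column in the $y$-block can be pulled into the peeled-off factor. To obtain a genuine two-term identity you would need a different auxiliary construction tailored to isolate the column~$s_k$, and you have not exhibited one; nor is it a direct instance of Desnanot--Jacobi in the mixed case, as you yourself note. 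So the step you call ``the engine of the induction'' is precisely the step that remains to be done.

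Your reduction to $(\star)$ and the appeal to Jacobi's theorem on adjugate minors for the pure case $j=k$ is correct and recovers Mui's formula, but this extra layer is unnecessary: the paper's single auxiliary determinant handles the pure and mixed cases uniformly, with the signs falling out mechanically rather than requiring separate tracking of shuffle, Jacobi, and peel signs. If you want to salvage your route, the cleanest fix is to replace your two-term step by the paper's $k$-term Laplace identity and then observe that induction plus anticommutativity collapses the sum.
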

\begin{proof}
	%By induction on $ k. $ 
	The relation holds trivially for $ k=1. $ Let us suppose $ k>1 $ and that it is true for all $ N_{a,J} $ where $ 1\leq a\leq n $ and $ J\in \mathds{B}_{k-1}. $
	
	Now we consider the following determinant:
	\[ D=\left| 
	\begin{matrix}
	y_1 &  \cdots & y_b &  y_1 & \cdots & y_b & y_{s_{j+1}} & \cdots & y_{s_k}\\
	x_1 &  \cdots & x_b &  x_1 & \cdots & x_b & x_{s_{j+1}} & \cdots & x_{s_k}\\
	\vdots & \vdots & \vdots & \vdots & \vdots & \vdots& \vdots & \vdots & \vdots\\ 
	x_1^{q^{b-1}} &  \cdots & x_b^{q^{b-1}} & x_1^{q^{b-1}} &  \cdots & x_b^{q^{b-1}} & x_{s_{j+1}}^{q^{b-1}} & \cdots & x_{s_k}^{q^{b-1}}\\
0&\cdots & 0 & y_1 & \cdots & y_b & y_{s_{j+1}} & \cdots & y_{s_k}\\
\vdots& \vdots & \vdots & \vdots & \vdots & \vdots & \vdots & \vdots & \vdots\\
\vdots& \vdots & \vdots &  y_1 & \cdots & y_b & y_{s_{j+1}} & \cdots & y_{s_k}\\
\vdots& \vdots & \vdots & x_1 &  \cdots & x_b & x_{s_{j+1}} & \cdots & x_{s_k}\\
\vdots& \vdots & \vdots & \vdots & \vdots & \vdots & \vdots & \vdots & \vdots\\ 
\vdots& \vdots & \vdots & \widehat{x_1^{q^{s_1-1}}}  & \cdots & \widehat{x_b^{q^{s_1-1}}}   & \widehat{x_{s_{j+1}}^{q^{s_1-1}}} & \cdots & \widehat {x_{s_k}^{q^{s_1-1}}}\\
\vdots& \vdots & \vdots & \vdots & \vdots & \vdots & \vdots & \vdots & \vdots\\ 
\vdots& \vdots & \vdots & \widehat{x_1^{q^{s_j-1}}}  & \cdots & \widehat{x_b^{q^{s_j-1}}}  & \widehat{x_{s_{j+1}}^{q^{s_j-1}}} & \cdots & \widehat {x_{s_k}^{q^{s_j-1}}}\\
\vdots& \vdots & \vdots & \vdots & \vdots & \vdots  & \vdots & \vdots & \vdots\\ 
0&\cdots & 0 & x_1^{q^{b-1}} &  \cdots & x_b^{q^{b-1}} & x_{s_{j+1}}^{q^{b-1}} & \cdots & x_{s_k}^{q^{b-1}}
	\end{matrix}
	\right|  .
	\begin{matrix}
	\Big\uparrow\\
	b+1 \text{ rows}\\
	\Big\downarrow\\
	\\
	\big\uparrow\\
	k-1 \text{ rows}\\
	\big\downarrow\\
	\\
	\big\uparrow\\
	\big|\\
	\big|\\
	\big|\\
	b-j \text{ rows }\\
	\big|\\
	\big|\\
	\big|\\
	\big\downarrow
	\end{matrix}
	\]

By expanding $ D $ by the first $ b $ columns, we have
\[ D= (-1)^bk!L_bN_{b,S}+ \sum_{i=1}^j (-1)^{b+k-i+1}N_{b,s_i} (k-1)!N_{b,S-\{ s_i \}}.\] 

By expand $ D $ by the first $ b+1 $ rows, we have
\[D=\sum_{i=j+1}^k (-1)^{b+1+i-j}N_{b,s_i} (k-1)!N_{b,S-\{ s_i \}}. \]

Therefore, we obtain:
\[ kL_bN_{b,S}=\sum_{i=1}^{j}(-1)^{k-i}N_{b,s_i}N_{b,S-\{ s_i \}} + \sum_{i=j+1}^k (-1)^{i-j+1}N_{b,s_i}N_{b,S-\{ s_i \}}. \]

From the induction hypothesis, we have

\bigskip
\begin{tabular}{rcl}
	$ kL_b^{k-1}N_{b,S} $& $ = $ & $ \sum_{i=1}^{j}(-1)^{k-i}N_{b,s_i}\cdot (-1)^{(k-1)(j-1)-j(j-1)/2} N_{b,s_1}\cdots \widehat{ N_{b,s_i}}\cdots N_{b,s_k} $\\
	&&\\
	& $ + $ & $ \sum_{i=j+1}^k (-1)^{i-j+1}N_{b,s_i}\cdot (-1)^{(k-1)j-j(j+1)/2} N_{b,s_1}\cdots \widehat{ N_{b,s_i}}\cdots N_{b,s_k} $\\
	&&\\
	& $ = $ & $ (-1)^{jk-(j+1)j/2}kN_{b,s_1}\cdots N_{b,s_k}. $
\end{tabular}

\bigskip
Consequently, $ L_b^{k-1}N_{b,S}=(-1)^{jk-(j+1)j/2}N_{b,s_1}\cdots N_{b,s_k}. $
Lemma holds.
\end{proof}

\begin{corollary}\label{linear indepen 2}
	If $ S=(s_1,\cdots,s_k)\in \mathds{B}_k $ and $ b<s_1, $ then 
	\[ N_{b,1}\cdots N_{b,b}N_{b,s_1}\cdots N_{b,s_k}=(-1)^{bk-b(b+1)/2} L_b^{b+k-1} y_1\cdots y_by_{s_1}\cdots y_{s_k}. \]
\end{corollary}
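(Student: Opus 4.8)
The plan is to read the left-hand side off from Lemma \ref{linear indepen 1} by choosing the index set as large as possible, so that the associated $ N_{b,S} $ degenerates to a purely exterior (pure-$ y $) determinant that can be evaluated at once. Concretely, I would set
\[ S^{*}=(1,2,\cdots,b,s_1,\cdots,s_k)\in\mathds{B}_{b+k}. \]
Because $ 1<\cdots<b $ are all $ \leq b $ while $ s_1<\cdots<s_k $ all exceed $ b $, the threshold index in Lemma \ref{linear indepen 1} is exactly $ b $ and the total length is $ b+k $; moreover the product appearing there, listed in increasing index order, is literally $ N_{b,1}\cdots N_{b,b}N_{b,s_1}\cdots N_{b,s_k} $, so no reordering of the (odd, hence anticommuting) factors is required.

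Applying Lemma \ref{linear indepen 1} to $ N_{b,S^{*}} $ then gives
\[ N_{b,1}\cdots N_{b,b}N_{b,s_1}\cdots N_{b,s_k}=\varepsilon\, L_b^{\,b+k-1}\,N_{b,S^{*}}, \]
where $ \varepsilon\in\{\pm1\} $ is the sign recorded in that lemma. It then remains to compute $ N_{b,S^{*}} $, and here is the key observation: for $ S^{*} $ we have $ \un{s}=(1,\cdots,b) $ and $ \un{a}=(s_1,\cdots,s_k) $, so the omitted exponents $ s_i-1 $ run through $ 0,1,\cdots,b-1 $, i.e.\ through \emph{all} of the $ x $-power rows in the defining determinant. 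Hence every one of the $ b+k $ rows of the matrix defining $ N_{b,S^{*}} $ is the single exterior row $ (y_1,\cdots,y_b,y_{s_1},\cdots,y_{s_k}) $. By the identity of \cite{Mu} (that $ \tfrac{1}{m!} $ times the determinant of an $ m\times m $ matrix whose every row equals $ (y_1,\cdots,y_m) $ is $ y_1\cdots y_m $), applied with the columns already in increasing index order so that no extra sign intervenes, the prefactor $ \tfrac{1}{(b+k)!} $ cancels and
\[ N_{b,S^{*}}=y_1\cdots y_b y_{s_1}\cdots y_{s_k}. \]
Substituting this back into the previous display yields the asserted formula.

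The only delicate part is the sign: one must substitute threshold index $ b $ and total length $ b+k $ into the exponent of Lemma \ref{linear indepen 1} and simplify the resulting parity to the sign displayed in the statement. Everything else is forced — the exponent $ b+k-1 $ of $ L_b $ is read directly off Lemma \ref{linear indepen 1}, the exterior part is pinned down to $ y_1\cdots y_b y_{s_1}\cdots y_{s_k} $ because that is the only degree-$ (b+k) $ monomial in these variables, and the pure-$ y $ evaluation contributes no sign thanks to the increasing column order. Thus the whole argument reduces to the single structural observation that choosing $ S^{*} $ to contain all of $ 1,\cdots,b $ collapses $ N_{b,S^{*}} $ onto the top exterior power, after which Lemma \ref{linear indepen 1} and the Mui determinant identity do the rest.
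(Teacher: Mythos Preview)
Your approach is correct and is exactly the paper's own proof: apply Lemma~\ref{linear indepen 1} with the enlarged index set $J=(1,\ldots,b,s_1,\ldots,s_k)$ (threshold $j=b$, length $b+k$), and then evaluate $N_{b,J}$ via the pure-$y$ identity recorded in the paper as equation~\eqref{pure odd} (which is precisely the Mui determinant identity you cite). Your explanation of why the $x$-rows disappear and of the bookkeeping with the sign is a slightly more expanded version of the paper's one-line computation, but the argument is identical.
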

\begin{proof}
	Thanks to above lemma and equation (\ref{pure odd}), we have
	\[ (-1)^{bk-b(b+1)/2}N_{b,1}\cdots N_{b,b}N_{b,S}=L_b^{b+k-1} N_{b, J}= L_b^{b+k-1} y_1\cdots y_by_{s_1}\cdots y_{s_k}, \]
	where $ J=(1,\cdots,b,s_1,\cdots,s_k)\in \mathds{B}_{b+k}. $
\end{proof}

\begin{corollary}\label{linear indepen 3}
	For all $ 1\leq b,s\leq n, N_{b,s}^2=0. $
\end{corollary}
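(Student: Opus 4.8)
The plan is to exploit the fact that $N_{b,s}$ is a homogeneous element of odd exterior degree exactly $1$, so that squaring it is governed entirely by the anticommutativity of the odd generators $y_i$.

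First I would observe that in the defining determinant for $N_{b,s}$ (the case $S=(s)\in\mathds{B}_1$, so $k+t=1$) there is precisely one row of $y$'s, while all remaining rows consist of powers of the $x$'s. Using the containment $N_{b,S}\in S^{b-k}(V)\ten\wedge^{k+t}(V)$ recorded just after the definition, the specialization $k+t=1$ gives $N_{b,s}\in P\ten\wedge^1(V)$. Expanding the row determinant by Laplace's development along the single $y$-row — which is legitimate because the odd $y$-entries commute with the even $x$-entries (so the row determinant here coincides with the ordinary cofactor expansion) — I would write $N_{b,s}=\sum_{i}c_i y_i$ with all $c_i\in P$.

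The computation is then immediate. Since each $c_i$ lies in $P$ it is of even parity, hence commutes with every $y_j$, so
\[ N_{b,s}^2=\sum_{i,j}c_ic_j\,y_iy_j=\sum_i c_i^2 y_i^2+\sum_{i<j}c_ic_j\,(y_iy_j+y_jy_i). \]
Every term vanishes by the exterior-algebra relations $y_i^2=0$ and $y_iy_j=-y_jy_i$: the diagonal part dies on $y_i^2=0$, and the off-diagonal part cancels because $c_ic_j$ is symmetric in $i,j$ while $y_iy_j$ is antisymmetric. Hence $N_{b,s}^2=0$.

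The one point requiring care — and the only place the argument could be said to meet an obstacle — is characteristic $2$. One is tempted to argue simply that any odd element $\omega$ satisfies $\omega^2=-\omega^2$ by $ab=(-1)^{\on{d}(a)\on{d}(b)}ba$, and conclude $\omega^2=0$; but this only yields $2\omega^2=0$, which is vacuous when $p=2$. The explicit expansion above is instead characteristic-free, and the reason is precisely that $N_{b,s}$ has \emph{pure} exterior degree $1$: the essential input is not merely that $N_{b,s}$ is odd, but that it is a $P$-linear combination of the $y_i$ alone, for which $y_iy_j+y_jy_i=0$ and $y_i^2=0$ force the square to vanish regardless of $p$.
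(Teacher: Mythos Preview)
Your proof is correct and takes a genuinely different, more elementary route than the paper.

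The paper argues indirectly via the product identities established in Lemma~\ref{linear indepen 1} and Corollary~\ref{linear indepen 2}: for $s\leq b$ it multiplies $N_{b,s}^2$ by the complementary product $N_{b,1}\cdots\widehat{N_{b,s}}\cdots N_{b,b}$, obtaining (up to sign) $L_b^{b-1}y_1\cdots y_b\,N_{b,s}$, which vanishes because $N_{b,s}$ is a $P$-combination of $y_1,\dots,y_b$; a parallel computation handles $s>b$. One then cancels the nonzero factor $N_{b,1}\cdots N_{b,b}$ (or its analogue). Your argument bypasses all of this machinery by going straight to the structural fact $N_{b,s}\in P\otimes\wedge^1(V)$, already recorded after the definition, and then using only the exterior relations $y_i^2=0$ and $y_iy_j+y_jy_i=0$. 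This is shorter and avoids the cancellation step, and your explicit treatment of characteristic $2$ is a nice touch. The paper's route, on the other hand, fits naturally into its development since the same product identities are needed anyway for Proposition~\ref{base}.
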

\begin{proof}
	If $ b\geq s, $ then   $ N_{b,1}\cdots \widehat{ N_{b,s}}\cdots N_{b,b}N_{b,s}^2=\pm L_b^{b-1} y_1\cdots y_bN_{b,s}=0.  $
	
	If $ b< s, $ then   $ N_{b,1}\cdots N_{b,b}N_{b,s}^2=\pm L_b^{b} y_1\cdots y_by_{s}N_{b,s}=0.  $
	
	By Lemma \ref{linear indepen 1}, $ N_{b,1}\cdots N_{b,b}=(-1)^{jk-(j+1)j/2} L_b^{b-1} y_1\cdots y_b \neq 0. $ Corollary holds.
\end{proof}

Similar arguments with \cite[Lemma 5.2]{Mu}, by Corollary \ref{action on V's}, Lemma \ref{linear indepen 1}, Corollary \ref{linear indepen 2} and \ref{linear indepen 3}, the following proposition holds.
\begin{proposition}\label{base}
	Let $  f=\sum_{J\in\mathds{B}(n)} N_{\tau(J),J}h_J $ 
	where $ h_J\in \mc{P}. $ Then $ f=0 $ if and only if all $ h_J=0. $
\end{proposition}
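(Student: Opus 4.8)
The plan is to prove the nontrivial direction (if $f=0$ then all $h_J=0$; the converse is immediate) by establishing the $P$-linear independence of the family $\{N_{\tau(J),J} : J\in\mathds{B}(n)\}$ inside the free $P$-module $\mc{A}=\bigoplus_{K\in\mathds{B}(n)}P\,y_K$. Since each $N_{\tau(J),J}$ is homogeneous of exterior degree $|J|$ while each $h_J\in P$ is even, the relation $f=0$ splits into its exterior-degree components; so I first reduce to fixing the cardinality $|J|=m$ and proving that $\{N_{\tau(J),J}:J\in\mathds{B}_m\}$ is $P$-linearly independent. The whole difficulty is that the first index $\tau(J)=\tau(\max J)$ jumps as $J$ varies, so the product formula of Lemma~\ref{linear indepen 1} cannot be applied uniformly to the whole relation.

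The key auxiliary step is a ``fixed first index'' statement: for each fixed $0\le \tau\le n$, the elements $\{N_{\tau,K}:K\in\mathds{B}(n)\}$ are linearly independent over $P$. To prove it, note that the degree-one elements $N_{\tau,1},\dots,N_{\tau,n}$ are odd, pairwise anticommuting, and square to zero by Corollary~\ref{linear indepen 3}; hence $e_s\mapsto N_{\tau,s}$ extends to an algebra homomorphism $\Lambda^\bullet(\on{Frac}(P)^n)\to\mc{A}\otimes_P\on{Frac}(P)$, which is precisely $\Lambda^\bullet$ of the linear map with matrix $A=(a_{sc})$, where $N_{\tau,s}=\sum_c a_{sc}\,y_c$. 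Writing $\prod_{s=1}^nN_{\tau,s}=\det(A)\,y_1\cdots y_n$ and comparing with Corollary~\ref{linear indepen 2} (taking $b=\tau$ and exterior indices $\tau+1,\dots,n$) gives $\det(A)=\pm L_\tau^{\,n-1}\neq 0$. Thus the degree-one map is invertible over $\on{Frac}(P)$, so $\Lambda^\bullet$ of it is an isomorphism; the products $\prod_{s\in K}N_{\tau,s}$ are therefore independent, and Lemma~\ref{linear indepen 1}, in the form $N_{\tau,K}=\pm\prod_{s\in K}N_{\tau,s}/L_\tau^{\,|K|-1}$, transfers this to the $N_{\tau,K}$.

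With this in hand I would prove the degree-$m$ statement by descending induction on the largest index occurring. Suppose $\sum_{J\in\mathds{B}_m}N_{\tau(J),J}h_J=0$ and set $j^\ast=\max\{\max J : h_J\neq 0\}$ and $\tau^\ast=\tau(j^\ast)$. The crucial observation is that the $y$-support of $N_{\tau(J),J}$ lies in $\{1,\dots,\tau(J)\}\cup\on{hd}(J)$, whose maximum is $\max J$; hence a monomial $y_K$ with $j^\ast\in K$ can occur only in the terms with $\max J=j^\ast$, and all of these share the single first index $\tau^\ast$. Projecting the relation onto $\bigoplus_{K\ni j^\ast}P\,y_K$ therefore isolates exactly the terms $J=J'\cup\{j^\ast\}$ with $\max J'<j^\ast$. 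For such $J$, Lemma~\ref{linear indepen 1} factors $N_{\tau^\ast,J}=\pm\bigl(\prod_{s\in J'}N_{\tau^\ast,s}\bigr)\,N_{\tau^\ast,j^\ast}/L_{\tau^\ast}^{\,m-1}$, and since $j^\ast>\tau^\ast$ the only contribution to $y_{j^\ast}$ comes from the leading term $\pm L_{\tau^\ast}y_{j^\ast}$ of the exterior-type factor $N_{\tau^\ast,j^\ast}$. Cancelling the common factor $y_{j^\ast}$ (multiplication by $y_{j^\ast}$ is injective on the span of the $y_K$ with $K\subseteq\{1,\dots,j^\ast-1\}$) and the power of $L_{\tau^\ast}$ leaves a relation $\sum_{J'}\pm N_{\tau^\ast,J'}h_{J'\cup\{j^\ast\}}=0$ of exterior degree $m-1$ with the single first index $\tau^\ast$. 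The fixed-first-index statement forces $h_{J'\cup\{j^\ast\}}=0$ for every $J'$, i.e. $h_J=0$ whenever $\max J=j^\ast$, contradicting the choice of $j^\ast$ unless all $h_J$ vanish.

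The main obstacle, as flagged above, is exactly the non-uniformity of the first index $\tau(J)$: one cannot invoke Lemma~\ref{linear indepen 1} on the whole sum at once. The device that resolves it is the support/projection argument onto the monomials containing the top index $j^\ast$, which automatically selects a sub-sum with a single first index and simultaneously drops the exterior degree by one, so that the fixed-$\tau$ lemma applies. Alternatively, one could use Corollary~\ref{action on V's} to push every $N_{\tau(J),J}$ up to a common first index modulo lower terms and then run a triangular elimination in the total order on $\mathds{B}_m$; the projection argument seems more economical, and it is where the bulk of the bookkeeping lives.
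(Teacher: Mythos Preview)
Your argument is correct. The route, however, differs from the one the paper has in mind. Following Mui, the paper inducts on $b=\max\{\tau(J):h_J\neq 0\}$: for a fixed $J$ with $\tau(J)=b$ and lower part $(j_1,\dots,j_i)$, one left-multiplies the whole relation by the complementary factors $N_{b,s_{i+1}}\cdots N_{b,s_b}$ (where $\{s_{i+1},\dots,s_b\}=\{1,\dots,b\}\setminus\{j_1,\dots,j_i\}$); the identities $N_{b,s}^2=0$ of Corollary~\ref{linear indepen 3} annihilate all terms whose lower part is not $(j_1,\dots,j_i)$, and Lemma~\ref{linear indepen 1} together with Corollary~\ref{linear indepen 2} collapses the survivors to $\pm\,y_1\cdots y_b\,y_{\on{hd}(T)}\,h_T$, from which $h_T=0$ is read off directly. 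Your version is more linear-algebraic: you first isolate a clean ``fixed first index'' statement via the exterior-algebra determinant trick (using Corollary~\ref{linear indepen 2} to compute $\det A=\pm L_\tau^{\,n-1}$), and then peel off the top exterior variable $y_{j^\ast}$ by projection rather than by multiplying with $N$'s. What your approach buys is modularity---the fixed-$\tau$ lemma is a reusable standalone fact---at the cost of a pass through $\on{Frac}(P)$; the paper's approach stays inside $\mc A$ and kills unwanted terms algebraically via $N^2=0$, which is closer to Mui's original argument and also explains why Corollary~\ref{action on V's} is listed among the ingredients (it provides the alternative triangular route you mention at the end).
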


\begin{lemma}\label{inv of head}
Suppose $ J_*=(j_1,\cdots,j_k)\in\mathds{B}_k, $ and $ f=\sum_{J\leq J_*} y_Jf_J(x_1,\cdots x_n)\in\mc{A} $ is $ U_I $-invariant, then $ f_{J_*}\in \mc{P} $ is $ U_I $-invariant. Moreover, $ f_{J_*} $ has factors
\[ \big\{V_i\mid i\in \{1,\cdots,\tau(j_k)\}\xg \{ j_1,\cdots, j_k \} \big\}. \]
\end{lemma}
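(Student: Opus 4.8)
The plan is to treat the two assertions separately, exploiting that the order $\leq$ on $\mathds{B}_k$ is compatible with the $U_I$-action. Both $f$ and $f_{J_*}$ sit in the fixed exterior degree $k=|J_*|$, since $J\leq J_*$ forces $J\in\mathds{B}_k$. For the invariance of $f_{J_*}$, the key point is that $U_I$ acts on the exterior variables by the same unipotent matrices as on the $x$'s: for $u\in U_I$ one has $u\cdot y_s=y_s+\sum_{c\leq\tau(s)}\gamma_{cs}\,y_c$ with $\gamma_{cs}\in\ff_q$. Expanding $u\cdot y_J$ and re-sorting indices, every monomial $y_{J'}$ that occurs satisfies $J'\leq J$ entrywise, hence $J'\leq J$ in the order, with the coefficient of $y_J$ itself equal to $1$; thus the action is lower unitriangular for $\leq$. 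Applying $u$ to $f=\sum_{J\leq J_*}y_Jf_J$ and reading off the coefficient of $y_{J_*}$, the monomial $y_{J_*}$ can arise from $u\cdot y_J$ only when $J_*\leq J$, and the hypothesis forces $J\leq J_*$, so only $J=J_*$ contributes, with coefficient $1$. Comparing with $u\cdot f=f$ gives $u\cdot f_{J_*}=f_{J_*}$ for every $u\in U_I$, which is part (a).

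For the divisibility I first reduce to one statement. Each $V_i$ is $\on{U}_n(q)$-invariant, hence $U_I$-invariant, and the $V_i$ for distinct $i$ are pairwise coprime (the irreducible factors of $V_i$ are monic of degree one in $x_i$ and involve only $x_1,\dots,x_i$). Hence it suffices to prove $\prod_{e\in E}V_e\mid f_{J_*}$, where $E=\{1,\dots,\tau(j_k)\}\xg\{j_1,\dots,j_k\}$. The relations are produced by the transvections $t=t_{e,j_k}(\alpha)\in U_I$ (legitimate since $e\leq\tau(j_k)$), which send $x_{j_k}\mapsto x_{j_k}+\alpha x_e$ and $y_{j_k}\mapsto y_{j_k}+\alpha y_e$ and fix the remaining generators. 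Reading off the coefficient of $y_{J_*}$ alone only reproves invariance; the useful identity comes from the coefficient of the \emph{lower} monomial $y_{J^-}$, where $J^-=(J_*\xg\{j_k\})\cup\{e\}<J_*$. In $t\cdot f=f$ the monomial $y_{J^-}$ is produced both by $t\cdot y_{J^-}$ and, through the substitution $y_{j_k}\mapsto\alpha y_e$, by $t\cdot y_{J_*}$, together with intermediate terms $y_J$ with $J^-<J<J_*$. Using $t\cdot f_{J_*}=f_{J_*}$, this expresses $\pm\alpha\,f_{J_*}$ as $f_{J^-}-t\cdot f_{J^-}$ minus the intermediate contributions, which is precisely the shape of the recursions in Lemma \ref{4.1} and Corollary \ref{action on V's}.

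Running this over all $e\in E$ and inducting on $J_*$ along $\leq$ (the minimal elements $J_*=(1,\dots,k)$ having $E=\varnothing$, where \ref{pure odd} gives the base case), the outcome is that $f_{J_*}$ assembles into an alternating, Moore/Dickson-type determinant in $x_1,\dots,x_{\tau(j_k)}$ whose rows and columns are indexed by $E$; such a determinant factors through $\prod_{e\in E}V_e$ by exactly the computation underlying Lemma \ref{linear indepen 1} and Corollary \ref{linear indepen 2}. Combined with coprimality, this yields the asserted factors.

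The hard part will be obtaining divisibility by the \emph{full} $V_e$, of degree $q^{e-1}$, rather than merely by $x_e$. The naive infinitesimal approach — taking the $\alpha^1$-coefficient of $t\cdot f=f$ and reading off a derivation $x_e\partial_{x_{j_k}}$ — gives only $x_e\mid f_{J_*}$ and, worse, is invalid in characteristic $p$: when $f_{J^-}$ has $x_{j_k}$-degree divisible by $q$, the transvection contributes through the Frobenius twists $\alpha^{q},\alpha^{q^2},\dots$, so the entire polynomial relation in $\alpha$ must be kept. The essential step is therefore to recognize that, once the full transvection action is used, $f_{J_*}$ takes the form of a Dickson-type alternating determinant, and to carry the signs and the poset bookkeeping (governed by the block function $\tau$) through the induction. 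This is where I expect the argument to be delicate, and I would model it closely on the proof of \cite[Lemma 5.2]{Mu}, adapted to the block structure.
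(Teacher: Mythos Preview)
Your treatment of the invariance of $f_{J_*}$ is correct and matches the paper: the $U_I$-action on the monomials $y_J$ is lower unitriangular for $\leq$, so comparing the $y_{J_*}$-coefficient in $u\cdot f=f$ yields $u\cdot f_{J_*}=f_{J_*}$.

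For the divisibility, however, you miss a short two-step argument and propose instead an over-engineered scheme that does not work as stated.

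First, a correction: when you apply the transvection $t=E+E_{e,j_k}$ and read off the coefficient of $y_{J^-}$ with $J^-=J_*\cup\{e\}\setminus\{j_k\}$, there are \emph{no} intermediate contributions. Since $t$ fixes every $y_m$ with $m\neq j_k$ and sends $y_{j_k}\mapsto y_{j_k}+y_e$, the monomial $y_{J^-}$ occurs in $t\cdot y_J$ only for $J=J^-$ or $J=J_*$. The relation is exactly
\[
\epsilon\,f_{J_*}(\ldots,x_e+x_{j_k},\ldots)\;=\;f_{J^-}-f_{J^-}(\ldots,x_e+x_{j_k},\ldots),
\]
with the substitution in the $j_k$-th slot and $\epsilon\in\{\pm1\}$.

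Second, the paper's conclusion from this is immediate and characteristic-free: specialize $x_e=0$. The right-hand side becomes $f_{J^-}|_{x_e=0}-f_{J^-}|_{x_e=0}=0$, and the left-hand side becomes $\epsilon\,f_{J_*}|_{x_e=0}$, so $x_e\mid f_{J_*}$. There is no $\alpha$-expansion and no derivative, so your worry about Frobenius twists is a red herring.

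Third --- and this is the idea you miss --- the upgrade from $x_e\mid f_{J_*}$ to $V_e\mid f_{J_*}$ is a one-line consequence of part (a). Since $f_{J_*}$ is $U_I$-invariant and $x_e\mid f_{J_*}$, for every $u\in U_I$ one has $(u\cdot x_e)\mid(u\cdot f_{J_*})=f_{J_*}$. Running over the transvections $E+\lambda E_{j,e}$ for $1\leq j<e$ gives the pairwise-coprime linear factors of $V_e$, whence $V_e\mid f_{J_*}$.

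Your proposed alternative --- inducting on $J_*$ and assembling $f_{J_*}$ into a Moore-type determinant via Lemma~\ref{linear indepen 1} and Corollary~\ref{linear indepen 2} --- does not go through. The inductive hypothesis would concern the \emph{top} coefficient of a $U_I$-invariant sum, but $f_{J^-}$ is merely a lower coefficient of the same $f$, not itself a leading term of anything invariant, so the induction does not apply to it. Moreover $f_{J_*}$ is an arbitrary $U_I$-invariant polynomial with prescribed factors, not a specific determinant. Drop the determinant machinery and use the invariance you already established.
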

\begin{proof}
	For all $ w=(w_{ij})\in U_I,\ wy_i=y_i+ w_{i-1,i}y_{i-1}+\cdots +w_{1i}y_1. $ Therefore,
	\[ wf=\sum_{J<J_*} y_J\pr{f}_J+y_{J_*}\pr{f}_{J_*}, \]
	where $ wf_{J_*}=\pr{f_{J_*}}. $ Comparing the coefficient of $ y_{J_*} $ of $ wf=f, $ we have $ wf_{J_*}=f_{J_*}. $
	
	Now, for each $ i\in{J_*}\cap \{1,\cdots,\tau(j_k)\},\ E+E_{i,j_k}\in U_I. $ Hence, $ (E+E_{i,j_k})\cdot f =f.$ Denote $ K=J_*+\{i\}-\{j_k\}. $ Comparing the coefficient of $ y_{K} $ on both side, we have
	\[ y_{J_*-\{j_k\}}y_if_{J_*}(x_1,\cdots, x_i+x_{j_k},\cdots) + y_Kf_K(x_1,\cdots,x_i+x_{j_k},\cdots)= y_Kf_K, \]
	where $ x_i+x_{j_k} $ is the $ j_k $-th component. Then
	\[ \epsilon f_{J_*}(x_1,\cdots, x_i+x_{j_k},\cdots)=f_K -  f_K(x_1,\cdots,x_i+x_{j_k},\cdots), \]
	where $ \epsilon\in\{ \pm1 \}. $ Taking value $ x_i=0 $ on both side, then
	$  f_{J_*}(\cdots,x_{i-1},0,x_{i+1},\cdots)=0.  $ Therefore, $ f_{J_*} $ has factor $ x_i. $ Since $ f_{J_*} $ is $ U_I $-invariant and all $ E+E_{j,i}\in U_I,\ 1\leq j< i, $ $ f_{J_*} $ has factor $ V_i. $
\end{proof}
	
\begin{proposition}\label{main prop}
Suppose $ S_*=(s_1^*,\cdots,s_k^*)\in\mathds{B}_k $ and $ s_j^*\leq\tau(s_k^*)< s_{j+1}^* $ for some $ 1\leq j\leq k-1. $ Let $ f=\sum_{S\leq S_*} y_Sf_S(x_1,\cdots x_n)\in\mc{A} $ be $ U_I $-invariant. Then
\[ f=\sum_{L\leq \on{hd}{(S_*)}} \sum_{\substack{S=(s_1,\cdots,s_k)\\ (s_{j+1},\cdots,s_k)=L}} N_{\tau(s_k),S}h_S(x_1,\cdots,x_n), \] 
where $ h_S\in \mc{P} $ is $ U_I $-invariant.
\end{proposition}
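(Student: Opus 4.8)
The plan is to argue by induction on the leading monomial $y_{S_*}$ of $f$ in the total order $(\mathds{B}_k,\le)$, peeling off one standard-invariant combination at a time. Since the order only compares elements inside a fixed $\mathds{B}_k$, I may assume $f$ is homogeneous of exterior degree $k$; write $b=s_k^*$, $c=\tau(b)$, so that $\on{hd}(S_*)=(s_{j+1}^*,\dots,s_k^*)$ and the tail is $T^*=(s_1^*,\dots,s_j^*)\subseteq\{1,\dots,c\}$. First I would isolate the top coefficient $f_{S_*}$, which by Lemma \ref{inv of head} is itself $U_I$-invariant and divisible by every $V_i$ with $i\in\{1,\dots,c\}\xg T^*$; thus $f_{S_*}=\big(\prod_{i\le c,\ i\notin T^*}V_i\big)h$ for some $h\in P$.

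The heart of the argument is to manufacture, out of the standard invariants with head $\on{hd}(S_*)$, a single $U_I$-invariant $\Phi=\sum_T N_{c,(T,\on{hd}(S_*))}\,h_T$ (the sum over tails $T\subseteq\{1,\dots,c\}$ of size $j$, with $h_T\in P^{U_I}$) whose leading monomial is \emph{exactly} $y_{S_*}$ with leading coefficient $f_{S_*}$. The subtlety is that, by Lemma \ref{linear indepen 1}, each $N_{c,(T,\on{hd}(S_*))}$ factors as $\pm L_c^{-(k-1)}\big(\prod_{s\in T}N_{c,s}\big)\big(\prod_r N_{c,a_r}\big)$, and for a fixed head every one of these shares the \emph{same} top monomial $y_{(c-j+1,\dots,c)}\,y_{a_1}\cdots y_{a_t}\ge y_{S_*}$. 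Hence no single standard invariant has top monomial $y_{S_*}$; one must take a combination in which all monomials strictly above $S_*$ cancel. I would realize this by a Cramer/cofactor computation in the $c\times c$ matrix $\mathbf P=(P_{s,m})$ recording the coefficients $N_{c,s}=\sum_m y_m P_{s,m}$: its determinant is $\pm L_c^{c-1}$ (this is exactly Corollary \ref{linear indepen 2}), so over the fraction field the tail wedges $\prod_{s\in T}N_{c,s}$ realize every target monomial $y_{T^*}$, and the complementary minors produce, up to a unit and a power of $L_c$ that cancels against $L_c^{-(k-1)}$, precisely the factor $\prod_{i\le c,\ i\notin T^*}V_i$ forced by Lemma \ref{inv of head}. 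Taking $h_T$ to be $h$ times the corresponding (invariant) cofactor then gives $\Phi$ with the desired top term and coefficients in $P^{U_I}$, with Corollaries \ref{linear indepen 3} and \ref{action on V's} used to control the lower-order monomials.

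With $\Phi$ in hand, $f-\Phi$ is again $U_I$-invariant, homogeneous of exterior degree $k$, and has leading monomial strictly below $S_*$, so the induction applies to it and terminates at the minimal monomial $(1,\dots,k)$. To close the loop I must check that the heads occurring in the decomposition of $f-\Phi$ are still $\le\on{hd}(S_*)$: from the definition of the order one verifies that $S'<S_*$ forces $\on{hd}(S')\le\on{hd}(S_*)$ with the same number $k-j$ of head entries, so the inductive terms fit into the claimed sum $\sum_{L\le\on{hd}(S_*)}$. Finally, the invariance of every coefficient $h_S$ follows from the $P$-linear independence of the $N_{\tau(s_k),S}$ (Proposition \ref{base}): applying $w\in U_I$ to the resulting identity and subtracting yields $\sum_S N_{\tau(s_k),S}(w\cdot h_S-h_S)=0$, whence each $h_S$ is $U_I$-invariant.

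The main obstacle is the middle step: producing $\Phi$ whose support is controlled \emph{from above} by $S_*$ rather than by the common top monomial $y_{(c-j+1,\dots,c)}y_{a_1}\cdots y_{a_t}$. Concretely, one must show that the cofactor coefficients solving the cancellation system lie in $P^{U_I}$ and that the surviving leading coefficient is a unit times $\prod_{i\le c,\ i\notin T^*}V_i$; tracking which monomials $y_{S'}$ with $S'<S_*$ remain, and with which heads, is where Corollary \ref{action on V's} and the order properties of $\mathds{B}_k$ do the real work. This is exactly the point at which the argument parallels, and must be adapted from the full-flag setting to the block-parabolic one of, the computation in \cite[Lemma 5.2]{Mu}.
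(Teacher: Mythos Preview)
Your approach differs substantially from the paper's, and the central step has a genuine gap. The paper argues by a \emph{double} induction, on both $k$ and on $S_*$: for $k>1$ it writes $f=F\cdot y_b+(\text{terms with }b\notin S)$ where $F$ has exterior degree $k-1$, applies the inductive hypothesis on $k$ to $F$ (with leading index $K_*=(s_1^*,\dots,s_{k-1}^*)$), then uses Corollary~\ref{action on V's} to convert the resulting $N_{\tau(l),K}$'s into $N_{\tau(b),S}$'s by multiplying in the missing $V_i$'s (available precisely because Lemma~\ref{inv of head} forces them as factors of the relevant coefficients), and finally uses a Laplace identity to pass from $N_{\tau(b),S}\,y_b$ to $N_{\tau(b),S\cup\{b\}}$ plus lower terms. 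The remainder then has leading index strictly below $S_*$ and the induction on $S_*$ closes. No linear system is ever inverted.

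Your single induction on $S_*$ tries instead to build $\Phi$ directly as a $P^{U_I}$-combination of the $N_{c,(T,\operatorname{hd}(S_*))}$'s with top monomial exactly $y_{S_*}$, via Cramer/cofactor in the $c\times c$ matrix $\mathbf{P}$. The problem is that inverting $\mathbf{P}$ (or its $j$-th compound) introduces denominators that are powers of $L_c$, and your claim that these cancel against the factor $\prod_{i\le c,\,i\notin T^*}V_i$ in $f_{S_*}$ is not justified: already for $j=1$, the unique solution with $\mathrm{RHS}_m=0$ for $m\neq s_1^*$ gives $h_t=\pm x_{s_1^*}^{q^{t-1}}f_{S_*}/L_c=\pm x_{s_1^*}^{q^{t-1}}h/V_{s_1^*}$, which is \emph{not} a polynomial in general. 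The system is underdetermined, so other choices exist, but you have not exhibited one that lands in $P$; this integrality issue is precisely what the paper's degree-reduction sidesteps. Separately, your claim that $S'<S_*$ forces $\operatorname{hd}(S')\le\operatorname{hd}(S_*)$ ``with the same number $k-j$ of head entries'' is false: if $s_k'<b$ lies in an earlier block then $\tau(s_k')<c$ and $\operatorname{hd}(S')$ can have more than $k-j$ entries (e.g.\ $I=(2,2)$, $S_*=(1,4)$, $S'=(1,2)$). What the proposition actually requires --- that the \emph{last $k-j$ entries} of any $S$ occurring satisfy $(s_{j+1},\dots,s_k)\le\operatorname{hd}(S_*)$ --- is a different (and true) statement.
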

\begin{proof}
	Suppose $ s_k^*=b. $ We will use double induction on both $ k $ and $ S_*. $
	
	\bigskip
(1) Suppose $ k=1 $ and $ S_*=(b), 1\leq b\leq n. $ 

\bigskip
(i)If $ b=1, $  $ \tau(b)=0. $ Moreover, $  N_{\tau(1), 1}=y_1 $ and $ f=y_1f_1. $ By Lemma \ref{inv of head}, $ f_1\in \mc{P}^{U_I} $ and proposition holds.

\bigskip
(ii) For arbitrary $ b, $ denote $ c=\tau(b). $ Suppose $ f=y_1f_1+\cdots y_bf_b. $ By Lemma \ref{inv of head}, $ f_b $ is $ U_I $-invariant and has factors $ \{ V_i\mid 1\leq i\leq c \}. $ Therefore, $ f_b=(-1)^{c+1}y_bL_{c}h_b $ where $ h_b\in \mc{P}^{U_I}. $ The expension of $ N_{c,b} $ along row 1 implies that
\[ N_{c,b}= (-1)^{c+1}y_bL_c+\sum_{i=1}^{c}(-1)^{i+1}y_iN_{i} \] where $ N_i\in \mc{P} $ is the minor of $ N_{c,b} $ at position $ (1,i). $ Hence $  f=N_{c,b}h_b +\sum_{i=1}^{b-1}y_i\pr{f}_i.  $ Note that $ f-N_{c,b}h_b =\sum_{i=1}^{b-1}y_i\pr{f}_i $ is $ U_I $-invariant. By induction, there are $ h_i\in \mc{P}^{U_I} $ such that
\[ f-N_{c,b}h_b=\sum_{i=1}^{b-1}N_{\tau(i),i}h_i\text{ and } f=\sum_{i=1}^{b}N_{\tau(i),i}h_i. \]

\bigskip
(2) For arbitrary $ k>1, $ suppose $ s_{k-1}^*=l<b, $ and $ s_i^*\leq\tau(l)< s_{i+1}^*. $

\bigskip
(i) If $ b=k, $ i.e. $ S_*=(1,2,\cdots,k), $ then $ f=y_{S_*}f_{S_*}. $ Note that $ y_{S_*}=N_{\tau(k),S_*} $ is $ U_I $-invariant. For all $ w\in U_I, $
$ wf=y_{S_*}(w\cdot f_{S_*})=y_{S_*}f_{S_*}, $ and hence $ f_{S_*} $ is $ U_I $-invariant. 

Proposition holds in this case.

\bigskip
(ii) Let us suppose $ b>k $ and that it is true for all $ S<S_*. $ One can rewrite $ f $ as
 \begin{equation}\label{f}
f=\left(\sum_{K\leq K_*}y_KF_K\right)y_b+\sum_{\substack{b\not\in S\\ S\leq S_*}}y_S f_S,
\end{equation} 
where $ K_*=(s_1^*,\cdots,s_{k-1}^*)\in\mathds{B}_{k-1} $ and $ F_K= f_{K+ \{b\}}. $

Now, set $ F=\sum_{K\leq K_*}y_KF_K. $  
Define $$ T(K_*)=\{(\alpha_1,\cdots,\alpha_{i},s_{i+1}^*,\cdots,s_{k-1}^*)\}\subseteq \mathds{B}_{k-1}. $$

Similar to the proof of Lemma \ref{inv of head}, one can prove that $ F $ is $ U_I $-invariant. Then by induction, $ F $ can be decomposed into 
\begin{equation}\label{decom F}
F=\sum_{L\leq \on{hd}{(K_*)}} \sum_{\substack{K=(s_1,\cdots,s_{k-1})\\ (s_{i+1},\cdots,s_{k-1})=L}} N_{\tau(s_{k-1}),K}h_K(x_1,\cdots,x_n)
\end{equation} 
where all $ h_K $ are $ U_I $-invariant.

Note that $ y_{S_*}f_{S_*}=y_{K_*}y_bF_{K_*}. $ As a component of $ F, $ $ N_{\tau(s),K} $ has factor $ y_{K_*} $ if and only if $ K\in T(K_*) $ which equivalent to $ L=\on{hd}{(K_*)}. $

Thanks to Lemma \ref{inv of head}, $ f_{S_*} $ has factors $ V_{\tau(l)+1}\cdots\widehat{V_{s_{i+1}^*}}\cdots \widehat{V_{s_{j}^*}} \cdots V_{\tau(b)}. $ It is a direct computation that $ N_{\tau(l),K} $ has no such factors if $ K\in T(K_*). $ As a consequence, $$ h_K= V_{\tau(l)+1}\cdots \widehat{V_{s_{i+1}^*}} \cdots \widehat{V_{s_{j}^*}} \cdots V_{\tau(b)}  \pr{h}_K$$ where $ \pr{h}_K\in \mc{P} $ for all $ K\in T(K_*). $ Since all of $ h_K $ and $ V_i\ (\tau(l)+1\leq i\leq \tau(b)) $ are $ U_I $-invariant, $ \pr{h}_K $ is also $ U_I $-invariant.

Denote $ \tilde{K_*}=(\tau(b)-j,\cdots, \tau(b)-1,s_{j+1}^*,\cdots, s_{k-1}^*). $ Thanks to Corollary \ref{action on V's},  
\[ \sum_{K\in T(K_*)} N_{\tau(l),K}V_{\tau(l)+1}\cdots \widehat{V_{s_{i+1}^*}} \cdots \widehat{V_{s_{j}^*}} \cdots V_{\tau(b)}= \sum_{S\leq \tilde{K_*}} N_{\tau(b),S}f_{S} \]
where $ f_{S}\in \mc{P}^{U_I}. $

Then
\begin{equation}\label{F as V component}
F=\sum_{L<\on{hd}{(K_*)}} \sum_{\substack{K=(s_1,\cdots,s_{k-1})\\ (s_{i+1},\cdots,s_{k-1})=L}} N_{\tau(s_{k-1}),K}h_K + \sum_{S\leq \tilde{K_*} } N_{\tau(b),S}h_{S}
\end{equation}
where $ h_S\in \mc{P}^{U_I} $ since all $ f_S\ (S\leq \tilde{K_*}) $ and $ \pr{h}_K\ (K\in T(K_*)) $ are $ U_I $-invariant.

For each $ S=(s_1,\cdots,s_j,s_{j+1}^*,\cdots, s_{k-1}^*)\leq \tilde{K_*}, $ note that $$ \on{hd}(S+\{b \}) = \on{hd}(S_*) = (s_{j+1}^*,\cdots, s_{k-1}^*, b). $$
By Laplace expansion,
\[ N_{\tau(b),S}y_b=(-1)^{u\cdot \tau(b)}y_{\on{hd}(S_*)}N_{\tau(b), (s_1,\cdots,s_j)} + \sum_{\pr{S}< S_*}y_{\pr{S}}\alpha_{\pr{S}}, \]
\[ N_{\tau(b),S+\{b \}}=(-1)^{(u+1)\cdot \tau(b)}y_{\on{hd}(S_*)} N_{\tau(b), (s_1, \cdots, s_j)} + \sum_{\pr{S}< S_*}y_{\pr{S}}\beta_{\pr{S}} \]
where $ u=k-j-1, $ $ \alpha_{\pr{S}},\beta_ {\pr{S}}\in \mc{P}. $ Therefore, 
\begin{equation}\label{N_b}
N_{\tau(b),S}y_b = (-1)^{\tau(b)} N_{\tau(b),S+\{b \}} + \sum_{\pr{S}< S_*} y_{\pr{S}}\gamma_{\pr{S}}
\end{equation}  where $ \gamma_{\pr{S}} = \alpha_{\pr{S}}- (-1)^{\tau (b)} \beta_{\pr{S}} \in \mc{P}. $

Combining equation (\ref{f}), (\ref{F as V component}) and (\ref{N_b}), we have
\begin{equation}\label{4.5}
f=\sum_{\substack{ S\leq \tilde{K_*}\\ \on{hd}(S+\{b\})=\on{hd}(S_*)}}  N_{\tau(b),S+\{b \}}h_S+A+B+C+D
\end{equation}
where $ h_S\in \mc{P}^{U_I} $ and
\[  A = \sum_{\substack{b\not\in S\\ S\leq S_*}} f_{S,1}y_S, \]
\[ B=\sum_{L<\on{hd}{(K_*)}} \sum_{\substack{S=(s_1,\cdots,s_{k-1})\\ (s_{i+1},\cdots,s_{k-1})=L}} N_{\tau(s_{k-1}),S}f_{S,2}y_b, \]
\[ C=\sum_{\substack{ S\leq \tilde{K_*}\\ \on{hd}(S+\{b\})<\on{hd}(S_*)}} N_{\tau(b),S+\{b \}}f_{S,3}, \]
\[ D=\sum_{\pr{S}< S_*} y_{\pr{S}}\gamma_{\pr{S}} \]
such that all possible $ f_{S,i} \in \mc{P},\ i=1,2,3, $ and $ \gamma_{\pr{S}}\in \mc{P}. $ 

It is obviously that $ A+B+C+D= \sum_{S<S_*} y_S\pr{f}_S $ where $ \pr{f}_S \in \mc{P} $ for all possible $ S. $ If $ S\leq \tilde{K_*}$ and $ \on{hd}(S+\{b\})=\on{hd}(S_*), $ then $ S+\{b\}=(s_1,\cdots,s_k) $ such that $ (s_{j+1},\cdots,s_k)= (s_{j+1}^*,\cdots,s_k^*). $ Therefore, one can rewrite equation (\ref{4.5}) as:
\[ f=\sum_{\substack{S=(s_1,\cdots,s_k)\\ (s_{j+1},\cdots,s_k)=\on{hd}(S_*)}} N_{\tau(b),S}h_S+ \sum_{K<S_*} y_K \pr{f}_K\]
where $ h_S\in \mc{P}^{U_I} $ and $ \pr{f}_K\in \mc{P}. $ Since both $ f $ and $ \sum_{\substack{S=(s_1,\cdots,s_k)\\ (s_{j+1},\cdots,s_k)=\on{hd}(S_*)}} N_{\tau(b),S}h_S $ are $ U_I $-invariant, then $ \sum_{K<S_*} y_K \pr{f}_K $ is also $ U_I $-invariant. Hence, proposition holds by induction.
\end{proof}

By equation (\ref{U_I on A}), Proposition \ref{base} and Proposition \ref{main prop}, we have the following main theorem.
\begin{theorem}\label{main thm}
	\begin{enumerate}
		\item $ \mc{P}^{U_I}= \ff_q[x_1,\cdots,x_{n_1}, v_{2,1} ,\cdots, v_{2,n_2},\cdots, v_{l,1},\cdots, v_{l,n_l} ], $
		\item    $ \mc{A}^{U_I} $ is a free  $ \mc{P}^{U_I}$ module of rank $ 2^n $ with a basis consisting of all elements of $$ \{ N_{\tau(S),S} \mid S\in\mathds{B}(n) \}. $$ In other words, there exists a decomposition
	\end{enumerate}
	\[ \mc{A}^{U_I}= \sum_{S\in \mathds{B}(n)} N_{\tau(S),S}\mc{P}^{U_I}.   \]
\end{theorem}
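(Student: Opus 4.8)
The plan is to assemble the final theorem from the two structural propositions already proved, so the real work is organizing them into the claims about freeness and rank. Part (1) is simply equation \eqref{U_I on A}, which is quoted from \cite{MT}, so no further argument is needed there. For part (2), I would first observe that the collection $\{N_{\tau(S),S}\mid S\in\mathds{B}(n)\}$ has exactly $2^n$ elements, since $\mathds{B}(n)=\sum_{k=0}^n \mathds{B}_k$ is indexed by all subsets of $\{1,\dots,n\}$ and $\sum_{k=0}^n\binom{n}{k}=2^n$. This pins down the rank.

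Next I would establish that these elements span $\mc{A}^{U_I}$ as a $P^{U_I}$-module and that they are $P^{U_I}$-linearly independent. Linear independence is exactly Proposition \ref{base}: any relation $\sum_{J\in\mathds{B}(n)} N_{\tau(J),J}h_J=0$ with $h_J\in P$ forces all $h_J=0$, and since $P^{U_I}\subseteq P$ this a fortiori gives independence over $P^{U_I}$. For spanning, the idea is that every $U_I$-invariant $f\in\mc{A}$ can be written in the triangular form $f=\sum_{S\leq S_*}y_Sf_S$ for a suitable top index $S_*$, and then Proposition \ref{main prop} rewrites this exactly as a $P^{U_I}$-combination of the $N_{\tau(S),S}$. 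So the plan is: take an arbitrary homogeneous $U_I$-invariant, let $S_*$ be the largest $S$ (in the total order on each $\mathds{B}_k$, then across degrees) for which $f_{S_*}\neq 0$, apply Proposition \ref{main prop} to conclude $f\in\sum_S N_{\tau(S),S}P^{U_I}$, and induct on $S_*$ to cover the lower-order remainder.

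The one genuine subtlety is that Proposition \ref{main prop} is stated for a fixed homogeneous component indexed by a single $S_*\in\mathds{B}_k$, whereas a general invariant mixes different exterior degrees $k$. I would handle this by decomposing $f$ according to the $\fz_2$-gradation and the finer $\fn$-gradation by exterior degree: since $U_I$ acts preserving exterior degree (each $wy_i$ is a combination of $y_1,\dots,y_i$, hence homogeneous of exterior degree $1$), the $U_I$-action respects the decomposition $\mc{A}=\bigoplus_k P\ten\wedge^k(V)$, so each graded piece of $f$ is separately $U_I$-invariant and Proposition \ref{main prop} applies to each piece. Summing over $k$ yields the global decomposition.

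Finally I would collect the directness of the sum. Spanning gives $\mc{A}^{U_I}=\sum_{S\in\mathds{B}(n)}N_{\tau(S),S}P^{U_I}$, and Proposition \ref{base} guarantees the sum is direct (any element has unique coefficients), so the module is free of rank $2^n$ on the stated basis. I expect the only place demanding care is the bookkeeping in the spanning step, namely verifying that the leading term extracted by Proposition \ref{main prop} really lowers $S_*$ so that the induction terminates; everything else is a direct assembly of the cited results.
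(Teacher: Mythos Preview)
Your proposal is correct and follows exactly the paper's approach: the paper simply cites equation \eqref{U_I on A}, Proposition \ref{base}, and Proposition \ref{main prop} without further comment, and you assemble the same three ingredients with some additional bookkeeping (the cardinality count and the exterior-degree decomposition). One small redundancy: the extra induction on $S_*$ you describe is unnecessary, since Proposition \ref{main prop} already outputs the full decomposition $f=\sum N_{\tau(S),S}h_S$ with $h_S\in P^{U_I}$ rather than only peeling off the top term; the induction on $S_*$ is internal to its proof.
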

\begin{remark}
	If $ I=(1,\cdots,1), $ i.e. $ U_I=U_n(q) $ the unipotent radical of $ \GL_I(q), $ then $ \tau (j)=j-1,\ j=1,\cdots,n. $ 
	
	Suppose $ 1\leq j\leq m\leq n,$ and $ 0\leq b_1<\cdots< b_j= m-1. $ Then $ M_{m;b_1,\cdots b_j}=N_{m-1,B}=N_{ \tau(B),B} $ where $ B=(b_1+1,\cdots,b_j+1)\in \mathds{B}_{j}. $ Therefore, Theorem \ref{main thm} generalize formula (\ref{A^ U_n}).
\end{remark}

\section{$ G_i $-invariants of $ \mc{A}^{U_I} $}
\subsection{}
For fixed $ 1\leq i\leq l, $ note that $ G_i $ acts on $ x_j $ and $ y_j $ trivially unless $ m_{i-1}<j\leq m_i. $ We will investigate $ (\mc{A}^{U_I})^{G_i} $ in this section.

Suppose $ f(x,y)=\sum_{S\in \mathds{B}(n)} N_{\tau(S),S}f_S(x)\in  (\mc{A}^{U_I})^{G_i} $ where $ x=(x_1,\cdots,x_n), y=(y_1,\cdots,y_n). $ 
%Now, $ G_i $ can act on $ x $ (resp. $ \mathds{B}(n) $) since $ G_i\inj G. $

Since $ G_i $-action is homogeneous, one can assume that, for some  $ 0\leq k\leq n, $ \begin{equation}\label{5.1}
f=\sum_{S\in \mathds{B}_k} N_{\tau(S),S}f_S\in  (\mc{A}^{U_I})^{G_i}. 
\end{equation}

Moreover, denote $ f=f_1+f_2+f_3, $ where $$ f_1=\sum_{\substack{S\in \mathds{B}_k\\ \tau(S)<m_{i-1}}} N_{\tau(S),S}f_S, $$ $$ f_2=\sum_{\substack{S\in \mathds{B}_k\\ \tau(S)=m_{i-1}}} N_{\tau(S),S}f_S, $$
$$ f_3=\sum_{\substack{S\in \mathds{B}_k\\ \tau(S)\geq m_{i}}} N_{\tau(S),S}f_S. $$
It is a direct computation that $ g\cdot f_i=f_i,\ i=1,2,3, $ for all $ g\in G_i. $

\subsection{} We will describe $ f_1 $ and $ f_3 $ in this subsection.
\begin{lemma}\label{5.11}
	If $ \tau (S)<m_{i-1}, $ then $ f_S $ is $ G_i $ invariant. Moreover, $$ f_1=\sum_{\substack{S\in \mathds{B}_k\\ \tau(S)<m_{i-1}}} N_{\tau(S),S}f_S, \text{ where } f_S\in (\mc{P}^{U_I})^ {G_i}. $$
\end{lemma}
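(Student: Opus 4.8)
The plan is to exploit the triviality of the $G_i$-action on the low-index variables $x_j, y_j$ with $j\le m_{i-1}$, together with the $P^{U_I}$-linear independence of the family $\{N_{\tau(S),S}\}$ recorded in Proposition \ref{base}.

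First I would pin down the variable support of the basis elements occurring in $f_1$. If $S=(s_1,\dots,s_k)\in\mathds{B}_k$ satisfies $\tau(S)=\tau(s_k)<m_{i-1}$, then necessarily $s_k\le m_{i-1}$: indeed $s_k>m_{i-1}$ would force $\tau(s_k)\ge m_{i-1}$. Hence every entry of $S$ lies in $\{1,\dots,m_{i-1}\}$, and in the determinant defining $N_{\tau(S),S}$ the column indices are exactly $\{1,\dots,\tau(S)\}$ together with the entries of $\on{hd}(S)$, all of which are $\le s_k\le m_{i-1}$. Consequently $N_{\tau(S),S}$ is a polynomial in $x_1,\dots,x_{m_{i-1}}$ and $y_1,\dots,y_{m_{i-1}}$ only, and since $G_i$ fixes each such variable we obtain $g\cdot N_{\tau(S),S}=N_{\tau(S),S}$ for all $g\in G_i$.

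Next I would apply an arbitrary $g\in G_i$ to $f_1$. Using the previous step,
\[ g\cdot f_1=\sum_{\substack{S\in\mathds{B}_k\\ \tau(S)<m_{i-1}}} N_{\tau(S),S}\,(g\cdot f_S). \]
Since $g\cdot f_1=f_1$ (as already observed before the statement), subtracting gives $\sum_{S} N_{\tau(S),S}(g\cdot f_S-f_S)=0$, where each $g\cdot f_S-f_S\in P$. By Proposition \ref{base} the elements $\{N_{\tau(S),S}\}$ are linearly independent over $P$, so $g\cdot f_S=f_S$ for every $S$ in the sum. As $g$ was arbitrary, each $f_S$ is $G_i$-invariant; combined with $f_S\in P^{U_I}$ coming from the basis decomposition in Theorem \ref{main thm}, this yields $f_S\in (P^{U_I})^{G_i}$ and hence the asserted form of $f_1$.

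The only genuinely delicate point is the support computation in the first step: one must read off from the explicit determinant defining $N_{b,S}$ that, once $\tau(S)<m_{i-1}$, no variable of index exceeding $m_{i-1}$ can appear in $N_{\tau(S),S}$. Everything afterwards is formal, driven by the triviality of the $G_i$-action on the low-index variables and the linear independence of Proposition \ref{base}.
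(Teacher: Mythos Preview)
Your proof is correct and follows essentially the same approach as the paper: both argue that $N_{\tau(S),S}$ is fixed by $G_i$ when $\tau(S)<m_{i-1}$, apply $g\in G_i$ to $f_1$, and then match coefficients. You are simply more explicit than the paper in justifying the two implicit steps, namely the support computation showing $N_{\tau(S),S}$ involves only variables of index $\le m_{i-1}$, and the appeal to Proposition~\ref{base} for the linear independence used to conclude $g\cdot f_S=f_S$.
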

\begin{proof}
	If $ \tau (S)<m_{i-1}, $ then $ \sigma \cdot \n{S}=\n{S} $ for all $ \sigma\in G_i. $ Hence, for every $ g\in G_i $, $$ g\cdot f_1=\sum_{\substack{S\in \mathds{B}_k\\ \tau(S)<m_{i-1}}} (g\cdot N_{\tau(S),S})(g\cdot f_S)=\sum_{\substack{S\in \mathds{B}_k\\ \tau(S)<m_{i-1}}} N_{\tau(S),S}(g\cdot f_S)= \sum_{\substack{S\in \mathds{B}_k\\ \tau(S)<m_{i-1}}} N_{\tau(S),S} f_S. $$ Therefore, $ g\cdot f_S=f_S. $ Lemma holds.
\end{proof}

\begin{lemma}\label{5.2}
	If $ \tau (S)\geq m_{i}, $ then $ f_S $ is $ G_i $ skew-invariant, i.e. $ g\cdot f_S=\det (g)^{-1}f_S $ for all $ g\in G_i. $ Moreover, $$ f_3=\sum_{\substack{S\in \mathds{B}_k\\ \tau(S)\geq m_{i}}} N_{\tau(S),S}f_S, \text{ where } f_S\in \mc{P}^{U_I}\text{ is $ G_i $ skew-invariant}. $$
\end{lemma}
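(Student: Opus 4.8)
The plan is to mirror the argument for Lemma \ref{5.11}, the only difference being that the basis elements $N_{\tau(S),S}$ with $\tau(S)\geq m_i$ are no longer fixed by $G_i$ but are scaled by the determinant. The key claim I would establish first is
\[
g\cdot N_{\tau(S),S} = \det(g)\, N_{\tau(S),S} \qquad \text{for all } g\in G_i,\ \text{ whenever } \tau(S)\geq m_i.
\]

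To prove this claim, write $b=\tau(S)\geq m_i$ and recall that the columns of the defining determinant of $N_{\tau(S),S}=N_{b,S}$ are indexed by $1,\dots,b$ together with the indices of $\on{hd}(S)$. Since $b\geq m_i$, every column $m_{i-1}+1,\dots,m_i$ of the $i$-th block occurs among $1,\dots,b$, while the head indices all exceed $b\geq m_i$ and hence lie in blocks beyond the $i$-th, where $G_i$ acts trivially. For $g=(g_{lj})\in G_i$ the substitution sends $x_j\mapsto\sum_l g_{lj}x_l$ and $y_j\mapsto\sum_l g_{lj}y_l$ for $j$ in block $i$; because $g_{lj}\in\ff_q$, the Frobenius-twisted rows transform identically, $x_j^{q^m}\mapsto\sum_l g_{lj}x_l^{q^m}$. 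Thus $g$ acts on the determinant purely as a column operation that mixes the block-$i$ columns by the matrix $g$ and leaves all other columns untouched; by multilinearity and alternation this multiplies the determinant by $\det(g)$, proving the claim.

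Granting the claim, I would finish exactly as in Lemma \ref{5.11}. It has already been noted that $g\cdot f_3=f_3$ for all $g\in G_i$. Expanding the left-hand side with the claim gives
\[
g\cdot f_3=\sum_{\substack{S\in\mathds{B}_k\\ \tau(S)\geq m_i}} \det(g)\,N_{\tau(S),S}\,(g\cdot f_S),
\]
and comparing this with $f_3=\sum_{\tau(S)\geq m_i} N_{\tau(S),S}f_S$ through the linear independence of $\{N_{\tau(S),S}\}$ over $P$ (Proposition \ref{base}) yields $\det(g)\,(g\cdot f_S)=f_S$ for each such $S$, i.e. $g\cdot f_S=\det(g)^{-1}f_S$, which is the asserted skew-invariance.

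The main obstacle is the determinantal claim itself: one must be sure that \emph{all} block-$i$ columns are genuinely present (this is precisely where $\tau(S)\geq m_i$ enters, in contrast with the middle range $\tau(S)=m_{i-1}$ treated separately, where only part of the block appears and no clean scaling law holds), and that the simultaneous substitution across the mixed $x$- and $y$-rows, including the Frobenius-twisted rows, really is a single column operation by $g$. Once this is pinned down, the remaining comparison of coefficients is routine given the free $P^{U_I}$-module structure of $\mc{A}^{U_I}$ from Theorem \ref{main thm}.
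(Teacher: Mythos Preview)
Your proposal is correct and follows essentially the same approach as the paper: the paper asserts that $g\cdot N_{\tau(S),S}=\det(g)\,N_{\tau(S),S}$ when $\tau(S)\geq m_i$ (stated there as ``one can check''), then compares coefficients in $g\cdot f_3=f_3$ exactly as you do. Your write-up simply supplies the column-operation justification for that determinantal claim and makes the appeal to Proposition~\ref{base} explicit.
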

\begin{proof}
	If $ \tau (S)\geq m_{i}, $ one can check that $ g\cdot \n{S}=\det (g)\n{S}. $ 
	$$ g\cdot f_3=\sum_{\substack{S\in \mathds{B}_k\\ \tau(S)\geq m_{i}}} (g\cdot N_{\tau(S),S})(g\cdot f_S)=\sum_{\substack{S\in \mathds{B}_k\\ \tau(S)\geq m_{i}}}\det(g) N_{\tau(S),S}(g\cdot f_S)= \sum_{\substack{S\in \mathds{B}_k\\ \tau(S)\geq m_{i}}} N_{\tau(S),S} f_S. $$
	Therefore, $ g\cdot f_S=\det(g)^{-1}f_S. $ Lemma holds.
\end{proof}

\subsection{}
When $ \tau(S)=m_{i-1}, $ we will discuss case by case. 
%\subsubsection{$ G_i=S_{n_i} $} 
\subsubsection{ }\label{6.2.1}$ G_i=G(m,a,n_i)<\GL_{n_i}. $   

Recall that $ G(m,a,n_i)\iso S_{n_i}\ltimes A(m,a,n_i) $ where $ a|m, $
\[ A(m,a,n_i)=\{ \on{diag}(w_1,\cdots,w_{n_i})\mid w_j\in\ff_q,\ w_j^m=(w_1\cdots w_{n_i})^{m/a}=1 \}. \]

Since $  G(m,a,n_i)=G(\pr{m},\pr{a},n_i), $ where $ \pr{m}=(q-1,m),$  $ \pr{a}=\pr{m}/(q-1,m/a), $
one can assume that $ m\mid (q-1) $ and $ m=ab. $ 

Since $ |G_i|=bm^{n_i-1} n_i!, $ therefore $ G_i $ is a nonmodular  group if and only if $ p>n_i $.

For each $ 1\leq i \leq l $ and  $ 1\leq k\leq n_i, $ we need the following notations.
\begin{itemize}
	\item $ \sigma_{i,S}=(m_{i-1}+1,s_1)\cdots (m_{i-1}+k,s_k)\in G(m,a,n_i), $ where $ S:=(s_1,\cdots, s_k)\in\mathds{B}(m_i) $ such that $ s_1>m_{i-1}; $
	\item $ c_{i,k}:=\sum_{ \substack{S=(s_1,\cdots,s_k)\in\mathds{B}_k \\ m_{i-1}<s_1<\cdots<s_k\leq m_i}} \sigma_{i,S}\in \ff_qG(m,a,n_i); $
	\item $ T_{i,k}:=T+ \{ m_{i-1}+1,\cdots, m_{i-1}+k \} \in \mathds{B}(m_{i}) $ for each $ T\in \mathds{B}(m_{i-1}); $
	\item $ \beta_{i,k,r}:=\left\{
	\begin{array}{ll}
	(x_{m_{i-1}+1}\cdots x_{m_{i-1}+k})^{m-1} & \text{ if }r=a\\
	(x_{m_{i-1}+1}\cdots x_{m_{i-1}+k})^{rb-1}(x_{m_{i-1}+k+1}\cdots x_{m_{i}})^{rb} & \text{ if }r=1,\cdots,a-1
	\end{array}
	\right. $ which lies in $ \mc{P}; $
	\item $  H_{i,k}:=G(m,1,k),\ \pr{H}_{i,k}:=G(m,1,n_i-k). $ One can regard $ H_{i,k}\times \pr{H}_{i,k} $ as a subgroup of $ G(m,1,n_i) $ by sending $ (\sigma, \alpha) $ to $ \on{diag}(\sigma, \alpha) $;
	\item By \cite[Section 20-2]{Ka}, if $ p>n_i $, then all skew-invariants of $ \ff_q[x_{m_{i-1}+1}^m\cdots x_{m_{i-1}+k}^m] $ over $ S_k $ form a free $  \ff_q[x_{m_{i-1}+1}^m\cdots x_{m_{i-1}+k}^m]^{S_k} $ module with one generator
	$$ \Delta_{i,k}:= \prod_{m_{i-1}<j_1<j_2\leq m_{i-1}+k} (x_{j_1}^m-x_{j_2}^m). $$ 
\end{itemize}

Recall that  $ \mc{P}^{U_I}=\ten_{i=1}^l \mc{P}_i $
where $ \mc{P}_i=\ff_q[v_{i,1},\cdots,v_{i,n_i}]. $ By Lemma \ref{6.4}, $ (\mc{P}_i)^{ H_{i,k}\times\pr{H}_{i,k}} $ is a free $ \mc{P}_i^{G(m,1,n_i)} $ module of rank $ C_{n_i}^k=\frac{n_i!}{k!(n_i-k)!}. $
Furthermore, suppose $ \{\alpha_{i,k,j}\mid j=1,\cdots,C_{n_i}^k \} $ is a basis.

\begin{lemma}
	\begin{enumerate}
		\item 	$ G_{i,k}:= \on{Stab}_{G_i}(\langle x_{m_{i-1}+1},\cdots,x_{m_{i-1}+k}\rangle) \iso( S_k\times S_{n_i-k})\ltimes A(m,a,n_i). $
		\item For each $ 1\leq k\leq n_i, $ $ G_i $ is generated by $ G_{i,k} $ and all $ \sigma_{i,S} $ where $ S:=(s_1,\cdots, s_k)\in\mathds{B}(n) $ such that $ m_{i-1}<s_1<\cdots<s_k\leq m_i. $
	\end{enumerate}
\end{lemma}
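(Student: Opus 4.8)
The plan is to work throughout with the semidirect decomposition $G_i=G(m,a,n_i)\iso S_{n_i}\ltimes A(m,a,n_i)$, reducing both statements to the combinatorics of $S_{n_i}$ acting on the coordinate lines $\langle x_{m_{i-1}+j}\rangle$, $1\le j\le n_i$. Write $W=\langle x_{m_{i-1}+1},\cdots,x_{m_{i-1}+k}\rangle$ and $A=A(m,a,n_i)$.

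For (1), I would first observe that every element of $A$ is diagonal, hence preserves each coordinate line and in particular fixes $W$ setwise, so $A\se G_{i,k}$. Writing a general element of $G_i$ as $\pi D$ with $\pi\in S_{n_i}$ and $D\in A$, the identity $\pi D\,W=\pi W$ shows that $\pi D$ stabilizes $W$ if and only if $\pi W=W$, i.e. if and only if $\pi$ maps the index set $\{m_{i-1}+1,\cdots,m_{i-1}+k\}$ into itself. The subgroup of $S_{n_i}$ doing this is exactly $S_k\times S_{n_i-k}$. Since $A$ is normal in $G_i$ and meets $S_k\times S_{n_i-k}$ trivially, I conclude $G_{i,k}=(S_k\times S_{n_i-k})\ltimes A$.

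For (2), the strategy is orbit--stabilizer combined with the elementary fact that a subgroup together with a full set of coset representatives generates the ambient group. Letting $S_{n_i}$ act on the $k$-element subsets of $\{m_{i-1}+1,\cdots,m_i\}$, this action is transitive and the stabilizer of the standard subset $\{m_{i-1}+1,\cdots,m_{i-1}+k\}$ is $S_k\times S_{n_i-k}$; hence the left cosets of $S_k\times S_{n_i-k}$ in $S_{n_i}$ biject with these subsets. The key computation is to check that $\sigma_{i,S}$ carries the standard subset onto $\{s_1,\cdots,s_k\}$: tracking the product of transpositions $(m_{i-1}+j,s_j)$ and using the inequality $s_j\ge m_{i-1}+j$ (a consequence of $m_{i-1}<s_1<\cdots<s_k$), one verifies that $\sigma_{i,S}(m_{i-1}+j)=s_j$ for all $j$. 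Consequently, as $S$ ranges over all admissible index sets, the $\sigma_{i,S}$ represent every left coset exactly once, so $\langle S_k\times S_{n_i-k},\{\sigma_{i,S}\}\rangle=S_{n_i}$. Adjoining $A\se G_{i,k}$ and using $G_i=S_{n_i}\ltimes A$ then yields $\langle G_{i,k},\{\sigma_{i,S}\}\rangle=G_i$.

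The main obstacle is precisely the bookkeeping in the key computation of (2): because the transpositions $(m_{i-1}+j,s_j)$ need not be disjoint when the standard subset and $\{s_1,\cdots,s_k\}$ overlap, one must argue carefully (by induction on $j$, checking that the later transpositions fix $m_{i-1}+j$ and the earlier ones fix $s_j$) that the composite still sends $m_{i-1}+j$ to $s_j$; the inequality $s_j\ge m_{i-1}+j$ is exactly what makes this succeed. Everything else---the transitivity of the action, the stabilizer computation, and the generation of a group from a subgroup together with a transversal---is standard and routine.
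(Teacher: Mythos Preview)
Your argument is correct. The paper itself does not give a proof of this lemma at all; it simply states the two assertions and moves on, treating them as elementary structural facts about $G(m,a,n_i)=S_{n_i}\ltimes A(m,a,n_i)$. Your write-up supplies exactly the sort of verification the paper omits.

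Two minor comments. First, your key computation in (2) relies on the right-to-left composition convention for permutations; you should say so explicitly, since under the opposite convention the formula $\sigma_{i,S}(m_{i-1}+j)=s_j$ fails (though the weaker statement that $\sigma_{i,S}$ carries the standard $k$-subset to some $k$-subset, with the assignment $S\mapsto\sigma_{i,S}\cdot\{m_{i-1}+1,\dots,m_{i-1}+k\}$ a bijection, can still be salvaged). Second, your inductive check that the later transpositions fix $m_{i-1}+j$ and the earlier ones fix $s_j$ is exactly right: for $\ell>j$ one has $s_\ell\ge m_{i-1}+\ell>m_{i-1}+j$, and for $\ell<j$ one has both $s_\ell<s_j$ and $m_{i-1}+\ell<m_{i-1}+j\le s_j$, so neither entry of $(m_{i-1}+\ell,s_\ell)$ equals the current value. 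This is precisely the bookkeeping you flagged, and it goes through cleanly.
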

\begin{proof}
	It is a direct computation.
\end{proof}

\begin{lemma}
  $ f_2 $ is $ G_i $ invariant if and only if the following conditions hold for all $ T\in \mathds{B}(m_{i-1}) $ and $ S=(s_1,\cdots,s_k)\in \mathds{B}(m_i) $ such that $ s_1>m_{i-1}. $
	% i.e. $ m_{i-1}+1\leq s_1<\cdots <s_k\leq m_i: $
	\begin{enumerate}
		\item $ f_{T+ S}(x)=f_{T_{i,k}}(\sigma_{i,S}(x))=\sigma_{i,S}\cdot f_{T_{i,k}}((x)). $ Moreover, $ \nmi{{T+ S}}f_{T+ S} =\sigma_{i,S} (\nmi{T_{i,k}}f_{T_{i,k}}). $
		\item $ \nmi{T_{i,k}}f_{T_{i,k}} $ is $ G_{i,k} $ invariant.
	\end{enumerate}
\end{lemma}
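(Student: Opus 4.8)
The plan is to use the preceding lemma, which asserts that $G_i=G(m,a,n_i)$ is generated by the stabiliser $G_{i,k}$ together with the transpositions $\sigma_{i,S}$; thus $G_i$-invariance of $f_2$ is equivalent to invariance under these generators. The engine of the proof is the transformation law of the free-basis elements $\nmi{R}$ under $G_i$. Since $p>n_i$, every $g\in G_i$ acts on $x_{m_{i-1}+1},\dots,x_{m_i}$ (and the matching $y$'s) by a monomial matrix, i.e.\ a permutation composed with a torus element; as $\nmi{R}$ is a row-determinant, I would first verify that such a $g$ sends $\nmi{R}$ to the \emph{single} basis element $\pm\chi(g)\,\nmi{g(R)}$, where $\chi(g)$ is a torus character and $g(R)$ is the re-sorted image index set. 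A direct inspection of the columns of $\nmi{T_{i,k}}$ shows that after $\sigma_{i,S}$ they are already sorted into the columns of $\nmi{T\cup S}$, so in fact $\sigma_{i,S}\cdot\nmi{T_{i,k}}=\nmi{T\cup S}$ with no sign. Because $\sigma_{i,S}$ fixes every index $\le m_{i-1}$ and permutes $\{m_{i-1}+1,\dots,m_i\}$, it preserves both $T=R\cap\{1,\dots,m_{i-1}\}$ and the block size $k=\abs{R\cap\{m_{i-1}+1,\dots,m_i\}}$.

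By Theorem \ref{main thm} the $\nmi{R}$ form a free $P^{U_I}$-basis, and by the previous step $G_i$ permutes them, up to sign and torus scalar, inside the sets $\mathcal O_{T,k}=\{\nmi{T\cup S}:S\subseteq\{m_{i-1}+1,\dots,m_i\},\ \abs{S}=k\}$, which are precisely its orbits on the index sets with $\tau(R)=m_{i-1}$. Writing $f_2=\sum_{T,k}g_{T,k}$ with $g_{T,k}=\sum_S\nmi{T\cup S}f_{T\cup S}$, freeness forces $f_2$ to be $G_i$-invariant if and only if every $g_{T,k}$ is. This reduces the statement to a single orbit.

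For the forward direction, apply $\sigma_{i,S}$ to $g_{T,k}$ and read off the coefficient of $\nmi{T\cup S}$. As $\sigma_{i,S}$ is an involution carrying $T_{i,k}$ to $T\cup S$ and every other member of $\mathcal O_{T,k}$ to a distinct basis element, only the standard term contributes, giving $\nmi{T\cup S}f_{T\cup S}=\sigma_{i,S}\bigl(\nmi{T_{i,k}}f_{T_{i,k}}\bigr)$, whence $f_{T\cup S}=\sigma_{i,S}\cdot f_{T_{i,k}}$; this is condition (1). Likewise each $g\in G_{i,k}$ fixes $\{m_{i-1}+1,\dots,m_{i-1}+k\}$ as a set, so it scales $\nmi{T_{i,k}}$ and moves the remaining basis elements of $\mathcal O_{T,k}$ elsewhere; comparing the coefficient of $\nmi{T_{i,k}}$ shows that $\nmi{T_{i,k}}f_{T_{i,k}}$ is $G_{i,k}$-invariant, which is condition (2).

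For the converse I would run a transfer argument. Put $w=\nmi{T_{i,k}}f_{T_{i,k}}$; then (1) says exactly $g_{T,k}=\sum_S\sigma_{i,S}(w)$. Since $G_{i,k}\iso(S_k\times S_{n_i-k})\ltimes A(m,a,n_i)$ has index $\binom{n_i}{k}$ in $G_i\iso S_{n_i}\ltimes A(m,a,n_i)$, and the $\sigma_{i,S}$ send the standard $k$-set to the $\binom{n_i}{k}$ distinct subsets $S$, the $\sigma_{i,S}$ form a full set of left coset representatives of $G_{i,k}$ in $G_i$. Condition (2) makes $w$ invariant under $G_{i,k}$, so for $g\in G_i$ the relation $g\sigma_{i,S}=\sigma_{i,S'}h$ with $h\in G_{i,k}$ gives $g\cdot g_{T,k}=\sum_S\sigma_{i,S'}(w)=g_{T,k}$, proving invariance. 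The step I expect to be the genuine obstacle is the bookkeeping of the first paragraph: confirming that the combined action of the monomial $\sigma_{i,S}$ on the \emph{super}-determinant $\nmi{T_{i,k}}$ returns the single basis element $\nmi{T\cup S}$ with precisely the sign implicit in (1) and no lower terms in the order on $\mathds{B}$. Granting that law and the freeness from Theorem \ref{main thm}, the orbit decomposition and the transfer step are purely formal.
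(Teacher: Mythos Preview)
Your argument is correct and follows the same route as the paper's (very terse) proof: use that the monomial group $G_i$ permutes the free basis $\{\nmi{R}\}$ up to scalars, compare coefficients via Proposition~\ref{base}/Theorem~\ref{main thm}, and for the converse run the orbit-sum (transfer) over the coset representatives $\sigma_{i,S}$ of $G_{i,k}$ in $G_i$. One small correction that does not affect the proof: $\sigma_{i,S}$ is not an involution in general (for instance $k=2$, $S=\{m_{i-1}+2,m_{i-1}+3\}$ gives a $3$-cycle); what you actually use, and what suffices, is only that $\sigma_{i,S}$ is a bijection on index sets sending the standard $k$-set to $S$, so that $\sigma_{i,S}^{-1}(T\cup S)=T_{i,k}$. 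Also, $G_i$ acts by monomial matrices because $G(m,a,n_i)$ is by definition a monomial group, not because $p>n_i$.
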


\begin{proof}
	One can check directly that  $ f_2 $ is $ G_i $ invariant if the two conditions hold for all $ T $ and $ S. $
	
	Conversely, suppose $ f_2 $ is $ G_i $ invariant. Then
	
(1) $ \sigma_S\cdot N_{m_{i-1}, T_{i,k}}=N_{m_{i-1}, T+ S} $ and $ \sigma_S(R)=T+ S $ if and onyl if $ R=T_{i,k}; $ %The statement holds since $ \sigma_S\cdot f_2=f_2. $
	
	(2)  $ \sigma\nmii{T_{i,k}}=\chi(\sigma)\nmii{T_{i,k}} $ for some $ \chi(\sigma) \in\ff_q $ and $ \sigma(R)={T_{i,k}} $ if and only if $ R=T_{i,k} $ for each $ \sigma\in G_{i,k}. $ 
	
	Lemma holds. 
\end{proof}

\begin{proposition}\label{S action}
	Assume $ p>n_i. $
	$ {(\mc{A}^{U_I})}^{G(m,a,n_i)} $ is a free $ {(\mc{P}^{U_I})}^{G(m,1,n_i)} $ module with a basis consisting of $  \beta_{i,n_i,r} $ and $ c_{i,k} (\nmi{T_{i,k}}\Delta_{i,k} \beta_{i,k,r} \alpha_{i,k,j}), $ where $ T\in\mathds{B}(m_{i-1}),$  $ 1\leq k\leq n_i,$  $ 1\leq j\leq C_{n_i}^k,\ 1\leq r\leq a. $
\end{proposition}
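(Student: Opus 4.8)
The plan is to leverage the $P^{U_I}$-module decomposition of $\mc{A}^{U_I}$ from Theorem \ref{main thm} together with the fact that $G_i=G(m,a,n_i)$ is nonmodular (since $p>n_i$), so that averaging over $G_i$ is available and relative invariants of $G_i$ form free modules by Proposition \ref{6.4}.

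For each exterior degree I would write an arbitrary $f\in(\mc{A}^{U_I})^{G(m,a,n_i)}$ in the form \ref{5.1} and split it as $f=f_1+f_2+f_3$ according to whether $\tau(S)<m_{i-1}$, $\tau(S)=m_{i-1}$, or $\tau(S)\ge m_i$. Since $G_i$ rescales each $N_{\tau(S),S}$ by a scalar depending only on the position of $\tau(S)$ (namely $1$, the block-$i$ character $w\mapsto w_1\cdots w_k$, or $\det$), the three summands are separately $G_i$-invariant. For $f_1$ and $f_3$ I would quote Lemmas \ref{5.11} and \ref{5.2}: their coefficients are $G_i$-invariant, resp.\ $G_i$-skew-invariant, polynomials. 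In particular the exterior-degree-zero piece forces $f_\emptyset\in(P^{U_I})^{G(m,a,n_i)}$, and because $(P^{U_I})^{G(m,a,n_i)}$ is free of rank $a=\lvert G(m,1,n_i)\rvert/\lvert G(m,a,n_i)\rvert$ over $(P^{U_I})^{G(m,1,n_i)}$ (Proposition \ref{6.4} applied to the block-$i$ factor $P_i$), a coset basis of this extension is precisely $\{\beta_{i,n_i,r}\}_{r=1}^{a}$; this produces the first family of generators.

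The core of the proof is the case $f_2$, where the head of $S$ sits inside block $i$. Here I would use the preceding lemma, which says that for each $T\in\mathds{B}(m_{i-1})$ and each head-size $k$ the coefficients $f_{T\cup S}$ are the $\sigma_{i,S}$-translates of a single $G_{i,k}$-invariant seed $N_{m_{i-1},T_{i,k}}f_{T_{i,k}}$, where $G_{i,k}\cong(S_k\times S_{n_i-k})\ltimes A(m,a,n_i)$ is the nonmodular stabilizer; consequently $f_2$ is recovered by applying the symmetrizer $c_{i,k}=\sum_S\sigma_{i,S}$, and it suffices to classify these seeds. To do so I would match $G_{i,k}$-characters: $N_{m_{i-1},T_{i,k}}$ is $S_k$-alternating and carries the torus character $w\mapsto w_1\cdots w_k$, so $f_{T_{i,k}}$ must be $S_k$-alternating, must carry the inverse torus character, and must be invariant in the remaining $n_i-k$ variables. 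I would then factor such a seed as $\Delta_{i,k}\,\beta_{i,k,r}\,\alpha_{i,k,j}$: the $S_k$-alternating part is $\Delta_{i,k}$ times a symmetric factor (\cite[Section 20-2]{Ka}), the torus character is carried by $\beta_{i,k,r}$, and the leftover is a $G(m,1,k)\times G(m,1,n_i-k)$-invariant, hence a $(P^{U_I})^{G(m,1,n_i)}$-combination of the free basis $\{\alpha_{i,k,j}\}_{j=1}^{C_{n_i}^{k}}$ (again Proposition \ref{6.4}). Reinserting this into $c_{i,k}$ yields exactly the second family $c_{i,k}\!\left(N_{m_{i-1},T_{i,k}}\Delta_{i,k}\beta_{i,k,r}\alpha_{i,k,j}\right)$ and shows the two families span.

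For freeness I would transport any vanishing $(P^{U_I})^{G(m,1,n_i)}$-combination back into $\mc{A}^{U_I}$, where the $\{N_{\tau(S),S}\}$ are $P^{U_I}$-independent by Proposition \ref{base}; expanding each $c_{i,k}$ recovers a distinct leading term $N_{m_{i-1},T\cup S}$ for each choice of $(T,k,S)$, while the freeness of the relative-invariant modules (Proposition \ref{6.4}) separates the $\beta_{i,k,r}\alpha_{i,k,j}$-coefficients, so all coefficients must vanish. The hard part will be the $f_2$ step: verifying that the $G_{i,k}$-invariant seeds factor cleanly as $\Delta_{i,k}\beta_{i,k,r}\alpha_{i,k,j}$ with the signs and torus weights of the Mui determinant $N_{m_{i-1},T_{i,k}}$ correctly absorbed, and that $c_{i,k}$ restricts to a bijection (not merely a surjection) onto the invariants — that is, controlling the interaction between the determinantal invariants $N$ and the reflection-group invariant theory of $G(m,a,n_i)$.
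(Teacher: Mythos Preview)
Your plan is essentially the paper's: reduce to the $f_2$-summand, use the preceding lemma to write it as $\sum_{k,T} c_{i,k}\bigl(N_{m_{i-1},T_{i,k}}\,f_{T_{i,k}}\bigr)$ with $G_{i,k}$-invariant seed, match characters of $G_{i,k}$ to factor the seed as $\Delta_{i,k}\,\beta_{i,k,r}\,\alpha_{i,k,j}$, and invoke Proposition~\ref{base} together with the freeness of $\{\alpha_{i,k,j}\}$ for linear independence. Two small corrections to your write-up: for $\tau(S)=m_{i-1}$ the group $G_i$ does \emph{not} merely rescale $N_{\tau(S),S}$ (the $S_{n_i}$-part permutes these determinants, which is exactly why the preceding lemma and the symmetrizer $c_{i,k}$ are needed), and in the factorization step the paper first imposes the $A(m,a,n_i)$-constraint to force the seed into $\beta_{i,k,r}\cdot\ff_q[x_{m_{i-1}+1}^m,\ldots,x_{m_i}^m]$ and only \emph{then} extracts the $S_k$-alternating factor, so that one obtains $\Delta_{i,k}=\prod(x_{j_1}^m-x_{j_2}^m)$ rather than the ordinary Vandermonde.
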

\begin{proof}
By above lemmas, 
	$$ f_2=\sum_{k=1}^{n_i}\sum_{ T\in \mathds{B}(m_{i-1})} \sum_{ \substack{S=(s_1,\cdots,s_k)\in\mathds{B}_k \\ m_{i-1}<s_1<\cdots < s_k\leq m_{i}}}\nmi{T+ S}f_{T+ S} $$
	$$=\sum_{k=1}^{n_i}\sum_{  T\in \mathds{B}(m_{i-1})} \sum_{ \substack{S=(s_1,\cdots,s_k)\in\mathds{B}(n) \\  m_{i-1}<s_1<\cdots < s_k\leq m_{i}}}\sigma_{i,S}(\nmi{T_{i,k}}f_{T_{i,k}}) $$
	$$=\sum_{k=1}^{n_i}\sum_{  T\in \mathds{B}(m_{i-1})} c_{i,k} (\nmi{T_{i,k}}f_{T_{i,k}}), $$
	where $ f_{T_{i,k}}\in \mc{P}^{U_I} $ and $ \nmi{T_{i,k}}f_{T_{i,k}} $ is  $ G_{i,k} $ invariant.
	
	Now, for $ g=\on{diag}(w_1,\cdots, w_{n_i})\in A(m,a,n_i), $ one can check that $ g\cdot \nmii{T_{i,k}}=w_1\cdots w_k \nmii{T_{i,k}}.$
	Therefore,
	\begin{equation}\label{aaa}
		\nmi{T_{i,k}}f_{T_{i,k}}=g\cdot\nmi{T_{i,k}}f_{T_{i,k}} =  w_1\cdots w_k\nmi{T_{i,k}}(g\cdot f_{T_{i,k}}). 
	\end{equation}  
	Suppose $ f_{T_{i,k}}=\sum_{\un{j}\in \fn^{n_i}} a_{\un{j}}x^{\un{j}}, $ then $ g\cdot f_{T_{i,k}}= \sum_{\un{j}} a_{\un{j}}w_1^{j_1}\cdots w_{n_i}^{j_{n_i}}x^{\un{j}}.  $ Recall that $ w_i^{m}=(w_1\cdots w_{n_i})^b=1. $ By equation (\ref{aaa}), one have $ a_{\un{j}}=0 $ unless
	\[ j_s= \left\{
	\begin{array}{ll}
	q_sm+rb-1 &, s=1,\cdots,k\\
	q_{s}m+rb &, s=k+1,\cdots,n_i
	\end{array}
	\right. \]
	where $ q_1,\cdots q_{n_i}\in \fn $ and $ r\in \{0,\cdots, a-1\}. $ 
	
	Hence,
	$  f_{T_{i,k}}=\sum_{r=1}^{a} \beta_{i,k,r}\pr{f}_{T,i,k,r}  $ where $ \pr{f}_{T,i,k,r}\in \ff_q[x_{m_{i-1}+1}^m,\cdots,x_{m_{i}}^m]^{U_I}. $
	
	For each $ \sigma\in S_k $ (resp. $\gamma \in S_{n_i-k} $), one can check that $ \sigma (\nmii{T_{i,k}}\beta_{i,k,r})=\det(\sigma)\nmii{T_{i,k}}\beta_{i,k,r} $ (resp. $ \gamma (\nmii{T_{i,k}}\beta_{i,k,r})=\nmii{T_{i,k}}\beta_{i,k,r} $). 
	Since $ \nmii{T_{i,k}}f_{T_{i,k}} $ is $ S_k\times S_{n_i-k} $ invariant,
	we have $ \sigma \pr{f}_{T,i,k,r}=\det(\sigma)^{-1}\pr{f}_{T,i,k,r} $ (resp. $ \gamma \pr{f}_{T,i,k,r}=\pr{f}_{T,i,k,r} $). Namely,
	$ \pr{f}_{T,i,k,r} $ is $ S_k $ skew-invariant and $ S_{n_i-k} $ invariant.
	
	Therefore, there is $ h_{T,i,k,r}\in  \ff_q[x_{m_{i-1}+1}^m,\cdots,x_{m_{i}}^m]^{S_k\times S_{n_i-k}}=\ff_q [x_{m_{i-1}+1},\cdots,x_{m_{i}}]^{H_{i,k}\times \pr{H}_{i,k}} $ such that
	 $  \pr{f}_{T,i,k,r}= \Delta_{i,k}h_{T,i,k,r}.  $ Moreover,
	 $$ {f}_{T_{i,k}}=\sum_{r=1}^{a}\Delta_{i,k} \beta_{i,k,r}\sum_{j=1}^{C_{n_i}^k} \alpha_{i,k,j}f_{T,i,k,r,j}, \text{ where }
	 f_{T,i,k,r,j}\in (\mc{P}^{U_I})^{G(m,1,n_i)}. $$
	
Consequently, 
	\[ f_2=\sum_{k=1}^{n_i}\sum_{  T\in \mathds{B}(m_{i-1})} c_{i,k} (\nmi{T_{i,k}}\sum_{r=1}^{a}\Delta_{i,k} \beta_{i,k,r}\sum_{j=1}^{C_{n_i}^k} \alpha_{i,k,j}f_{T,i,k,r,j}) \]
	\[ =\sum_{k=1}^{n_i}\sum_{  T\in \mathds{B}(m_{i-1})} \sum_{r=1}^{a} \sum_{j=1}^{C_{n_i}^k} c_{i,k} \left(\nmi{T_{i,k}}\Delta_{i,k} \beta_{i,k,r} \alpha_{i,k,j}\right)f_{T,i,k,r,j}. \]
	
Thanks to Proposition \ref{base} and the definition of $ \{\alpha_{i,k,j}, \beta_{i,k,r}\}, $ these generators are linear independent as $ \mc{P}^{G(m,1,n_i)} $ module. 
Proposition holds.	
\end{proof}
\begin{remark}
\begin{enumerate}
	\item  $ \mc{P}^{G(m,a,n_i)} $ is a free $ \mc{P}^{G(m,1,n_i)} $ with a basis $ \{ \beta_{i,n_i,r} \mid r=0,\cdots a-1\}. $ 
	\item Although $ \mc{A}^{G(m,a,n_i)} $ is a $ \mc{P}^{G(m,a,n_i)} $ module, it is hard to formulate the structure as $ \mc{P}^{G(m,a,n_i)} $ module. 
	The main issue is to decompose $ \mc{P}^{G_{i,k}} $ as $ \mc{P}^{G(m,a,n_i)} $ module. 
	\item  $ \mc{P}^{G_{i,k}} $ is complete intersection other than a polynomial ring. In fact, $$ \mc{P}^{G_{i,k}}=\ff_q[u_1,\cdots ,u_{n_i}, v]/ (u_ku_{n_i}-v^a), $$ %has generators $ \{u_1,\cdots ,u_{n_i}, v\} $ 
	where 
	\[ u_i=\left\{ 
	\begin{array}{ll}
	\sum_{1\leq j_1< \cdots <j_i\leq k} x_{m_{i-1}+j_1}^m\cdots  x_{m_{i-1}+j_i}^m & i=1,\cdots, k\\
	\sum_{k+1\leq j_1< \cdots <j_{i-k}\leq n_i} x_{m_{i-1}+j_1}^m\cdots  x_{m_{i-1}+j_{i-k}}^m & i=k+1,\cdots, n_i
	\end{array}
	\right.,\] $\text{ and }
	v=(x_1\cdots x_{n_i})^b. $ 
\end{enumerate}
\end{remark}

\begin{corollary}
	If $ a=1, $ i.e. $ G_i=G(m,1,n_i), $ and $ p>n_i, $ then $ \mc{A}^{G(m,1,n_i)} $ is a free $ \mc{P}^{G(m,1,n_i)} $ module with a basis consisting of $ 1 $ and $ c_{i,k} (\nmi{T_{i,k}}\Delta_{i,k}  \alpha_{i,k,j}), $ where $ T\in\mathds{B}(m_{i-1}),$  $ 1\leq k\leq n_i$ and $ 1\leq j\leq C_{n_i}^k. $
\end{corollary}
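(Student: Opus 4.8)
The plan is to read the statement off Proposition~\ref{S action} by specializing to $a=1$ and recording how its two families of generators degenerate. First I would note that $a=1$ forces $G(m,a,n_i)=G(m,1,n_i)$, so the coefficient ring $(P^{U_I})^{G(m,1,n_i)}$ appearing in Proposition~\ref{S action} is the very ring $P^{G(m,1,n_i)}$ over which we claim freeness, and the module $(\mc{A}^{U_I})^{G(m,a,n_i)}$ is the module in the statement. Thus freeness and the spanning/independence mechanism are inherited verbatim, and the only task is to rewrite the basis.

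For the polynomial generators $\{\beta_{i,n_i,r}\mid 1\le r\le a\}$: by the Remark after Proposition~\ref{S action} these form a free basis of $P^{G(m,a,n_i)}$ over $P^{G(m,1,n_i)}$, a module which for $a=1$ is $P^{G(m,1,n_i)}$ over itself and hence free of rank one. So the single coset representative can be taken to be $1$, which yields the generator $1$ in the statement. For the mixed generators $c_{i,k}(\nmi{T_{i,k}}\Delta_{i,k}\beta_{i,k,r}\alpha_{i,k,j})$ the index $r$ collapses to $r=1$, so they are now indexed only by $T\in\mathds{B}(m_{i-1})$, $1\le k\le n_i$ and $1\le j\le C_{n_i}^{k}$, exactly the index set claimed. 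The surviving factor is $\beta_{i,k,1}=(x_{m_{i-1}+1}\cdots x_{m_{i-1}+k})^{m-1}$, and I would keep it inside each generator: it is precisely the relative invariant of character $(w_1\cdots w_k)^{-1}$ under $A(m,1,n_i)$ needed to cancel the character $w_1\cdots w_k$ of $\nmi{T_{i,k}}$ (recall $m\mid q-1$, so every entry of the relevant block column of $\nmi{T_{i,k}}$ scales by the same $w_s$), and without it the product would fail to be $G(m,1,n_i)$-invariant.

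Finally, linear independence over $P^{G(m,1,n_i)}$ is the same assertion as in Proposition~\ref{S action}: it rests only on Proposition~\ref{base} and on the independence of the $\Delta_{i,k}\alpha_{i,k,j}$, neither of which is affected by setting $a=1$. There is therefore no substantive obstacle here; the entire content is the bookkeeping of the previous paragraph, namely confirming that the pure-polynomial family collapses to the single element $1$ while each mixed generator retains exactly the factor $\beta_{i,k,1}$.
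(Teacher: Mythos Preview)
Your specialization strategy is exactly what the paper intends: the corollary is placed immediately after Proposition~\ref{S action} with no separate argument, so the implicit proof is simply ``set $a=1$''. Your treatment of the pure-polynomial family is fine---for $a=1$ the base ring and the module in the Remark coincide, so the single generator may be taken to be $1$.

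Where you part ways with the printed statement is in retaining the factor $\beta_{i,k,1}=(x_{m_{i-1}+1}\cdots x_{m_{i-1}+k})^{m-1}$ in each mixed generator, whereas the corollary drops it. Here you are right and the corollary as printed is not: for $m>1$ the element $c_{i,k}\bigl(\nmi{T_{i,k}}\Delta_{i,k}\alpha_{i,k,j}\bigr)$ fails to be $A(m,1,n_i)$-invariant, because under $g=\operatorname{diag}(w_1,\dots,w_{n_i})$ each summand $\sigma_{i,S}\bigl(\nmi{T_{i,k}}\Delta_{i,k}\alpha_{i,k,j}\bigr)$ scales by $\prod_{s\in S}w_{s-m_{i-1}}$, and these characters differ as $S$ varies. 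The factor $\beta_{i,k,1}$ is exactly what is needed to make every summand transform by the \emph{same} (trivial) character, as you explain. So your argument is a correct proof of the intended corollary with $\beta_{i,k,1}$ restored; the discrepancy reflects an omission in the paper's statement rather than a gap in your reasoning. (The same slip recurs in the proof of Theorem~\ref{imprimitive}, which quotes Proposition~\ref{S action} but writes the generators without $\beta_{i,k,r}$ while still indexing the coefficients $f_{T,i,k,r,j}$ by~$r$.)
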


\subsubsection{$ G_i=\SL_{n_i}(q)\text{ or }\GL_{n_i}(q) $} 
Suppose $ f_2=\sum_{S\leq S^*} \nmi{S}f_S, $ where $ S^*=(s_1^*,\cdots, s_k^*) $ and $ s_j^*< m_{i-1}\leq s_{j+1}^*. $ Let $ U_i $ be the subgroup of $ G_i $ consisting of all upper triangular matrices of the form
\[ \left(
\begin{matrix}
1 & * & \cdots & *\\
0 & 1 & \cdots & *\\
\vdots & \vdots &\vdots &\vdots \\
0 & \cdots & 0 & 1
\end{matrix} 
\right).
\]
\begin{lemma}\label{ui-invarint}
	$ f_{S^*} $ is $ U_i $-invariant.
\end{lemma}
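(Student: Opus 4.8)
The plan is to mimic the proof of Lemma \ref{inv of head}: I will show that $U_i$ acts on the $P^{U_I}$-basis $\{\,\n{S}\mid S\in\mathds{B}(n)\,\}$ of $\mc{A}^{U_I}$ (Theorem \ref{main thm}) in an ``upper triangular'' way with respect to the total order $\leq$, and then simply read off the leading coefficient. Note first that since $U_i\leq G_i\leq L_I$ and $L_I$ normalizes $U_I$, the group $U_i$ genuinely acts on $\mc{A}^{U_I}$ and preserves $P^{U_I}$; in particular $w\cdot f_S\in P^{U_I}$ for every $w\in U_i$, so the whole argument stays inside the free $P^{U_I}$-module $\mc{A}^{U_I}$. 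Also, $f_2$ is a homogeneous $G_i$-invariant component of $f$, hence $U_i$-invariant.

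The key step is the following triangularity claim: for every $w\in U_i$ and every $S$ with $\tau(S)=m_{i-1}$,
\[ w\cdot \nmi{S}=\nmi{S}+\sum_{\substack{S'<S\\ \tau(S')=m_{i-1}}}\lambda_{S'}\,\nmi{S'},\qquad \lambda_{S'}\in\ff_q. \]
To see this, recall that the columns of the determinant defining $\nmi{S}$ are indexed by $\{1,\dots,m_{i-1}\}$ together with the block-$i$ entries $a_1<\cdots<a_t$ of $S$ (those lying in $(m_{i-1},m_i]$), while the entries of $S$ below $m_{i-1}$ only prescribe which $x$-power rows are deleted. An element $w\in U_i$ fixes all variables outside block $i$ and sends $y_{a_p}\mapsto y_{a_p}+\sum_{m_{i-1}<j<a_p}c_{j,a_p}y_j$ and $x_{a_p}\mapsto x_{a_p}+\sum_{m_{i-1}<j<a_p}c_{j,a_p}x_j$ with $c_{j,a_p}\in\ff_q$. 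Since $c^{q^e}=c$ for all $c\in\ff_q$ and $e\geq 0$, applying Frobenius shows that $w$ adds to each block-$i$ column $a_p$ exactly $\sum_j c_{j,a_p}$ times the $j$-th column, \emph{uniformly} across the $y$- and $x$-rows. Expanding the determinant multilinearly in the block-$i$ columns, the term keeping every column gives $\nmi{S}$; any term replacing a column $a_p$ by a column $j$ already present yields a repeated column and vanishes; and every surviving term replaces a set of block-$i$ entries of $S$ by a set of distinct strictly smaller fresh indices still lying in $(m_{i-1},m_i]$. After reordering columns (at the cost of a sign), such a term is $\pm\nmi{S'}$: the below-$m_{i-1}$ part of $S$ is untouched, so the deleted rows, hence $\tau(S')=m_{i-1}$, are preserved. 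Finally $S'<S$ because the largest element of the symmetric difference $S\triangle S'$ is the largest replaced entry $a_p$, which lies in $S$; this is exactly the colex criterion defining $\leq$ on $\mathds{B}_k$.

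Granting the claim, I finish as in Lemma \ref{inv of head}. Fix $w\in U_i$ and expand
\[ w\cdot f_2=\sum_{S\leq S^*}(w\cdot \nmi{S})(w\cdot f_S). \]
By the claim, for $S<S^*$ the factor $w\cdot\nmi{S}$ is a $P^{U_I}$-combination of $\nmi{S'}$ with $S'\leq S<S^*$, while for $S=S^*$ it equals $\nmi{S^*}$ plus such strictly lower terms. Hence the coefficient of the basis element $\nmi{S^*}$ in $w\cdot f_2$ is exactly $w\cdot f_{S^*}$, whereas in $f_2$ it is $f_{S^*}$. Since $f_2$ is $U_i$-invariant, $w\cdot f_2=f_2$, and uniqueness of the $P^{U_I}$-basis decomposition forces $w\cdot f_{S^*}=f_{S^*}$ for all $w\in U_i$. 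Thus $f_{S^*}$ is $U_i$-invariant.

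The crux of the whole argument is the triangularity claim, and the main obstacle inside it is the bookkeeping that turns each multilinear replacement term into a genuine basis element $\pm\nmi{S'}$ with $S'<S$. Three things must be checked simultaneously: that the coefficients land in $\ff_q$ (this is the Frobenius identity $c^{q^e}=c$, which forces the $y$- and $x$-parts of a column to transform by the same scalar); that the replaced columns remain in block $i$, so $\tau(S')=m_{i-1}$ survives; and that simultaneously replacing entries by strictly smaller fresh indices is a strict decrease in the colex order, which follows from the symmetric-difference description of $\leq$.
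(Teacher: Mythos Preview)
Your proof is correct and follows essentially the same approach as the paper's: both establish the triangularity $u\cdot\nmi{S}=\nmi{S}+\sum_{L<S}a_L\nmi{L}$ with $a_L\in\ff_q$, and then compare the coefficient of $\nmi{S^*}$ in $u\cdot f_2=f_2$. The paper simply asserts the triangularity without justification, whereas you supply the careful column-by-column Frobenius/multilinearity argument and the colex verification; your added detail is sound and fills exactly what the paper leaves implicit.
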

\begin{proof}
	$ \forall u\in U_i,\ u\cdot \nmi{S}=\nmi{S}+\sum_{L<S}a_L\nmi{L}, $ where $ a_L\in \ff_q. $
	Therefore, $$ u\cdot f_2=\nmi{S^*}(u\cdot f_{S^*}) +\sum_{S< S^*} \nmi{S}\pr{f}_S. $$
	$ u\cdot f_2=f_2 $ implies that $ u\cdot f_{S^*}=f_{S^*}. $
\end{proof}

\begin{proposition}\label{5.6}
	\[ f_2= \sum_{ \substack{S=(s_1,\cdots,s_k)\in\mathds{B}_k \\ m_{i-1}< s_k\leq m_{i}}} N_{m_i,S}h_S= \sum_{ \substack{S=(s_1,\cdots,s_k)\in\mathds{B}_k \\ m_{i-1}< s_k\leq m_{i}}} N_{m_i,S}\theta_i^{q-2}\bar{h}_S\]
	where $h_S\in \mc{P}^{\SL_{n_i}}, \bar{h}_S\in \mc{P}^{\GL_{n_i}}. $
\end{proposition}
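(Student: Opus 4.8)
The plan is to treat this as Mui's computation of $\mc{A}^{\SL_{n_i}(q)}$ carried out inside the $i$-th block, the operative idea being to lift the level of the basis elements from $m_{i-1}$ up to $m_i$. I would first record how the target generators transform. In the determinant defining $N_{m_i,S}$ the index set is $\{1,\dots,m_i\}$ with empty $\un{a}$-part (since $s_k\leq m_i$), so its columns are indexed by $1,\dots,m_i$; the columns $m_{i-1}+1,\dots,m_i$ are exactly the block on which $g\in\GL_{n_i}(q)$ acts, in the $y$-rows and the Frobenius-twisted $x$-rows alike. As the entries of $g$ lie in $\ff_q$ they commute with $x\mapsto x^{q^l}$, so this block of columns is right-multiplied by $g$ uniformly across all rows; since the row determinant is $\ff_q$-multilinear and alternating in the columns, $g\cdot N_{m_i,S}=\det(g)N_{m_i,S}$. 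Thus $N_{m_i,S}$ is $\SL_{n_i}(q)$-invariant, and because $g\cdot\theta_i=\det(g)\theta_i$ with $\det(g)^{q-1}=1$, the element $N_{m_i,S}\theta_i^{q-2}$ is $\GL_{n_i}(q)$-invariant.

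Next I would convert $f_2=\sum_{S\leq S^*}N_{m_{i-1},S}f_S$ into the $N_{m_i,\cdot}$ shape by induction on $S^*$ in the total order on $\mathds{B}_k$, mimicking \cite[Lemma 5.2]{Mu} and Proposition \ref{main prop}, but using throughout only $U_i$-invariance, which (unlike full $G_i$-invariance) survives when a leading term is peeled off. By Lemma \ref{ui-invarint} the head coefficient $f_{S^*}$ is $U_i$-invariant; adapting the coefficient-comparison of Lemma \ref{inv of head} to the block elementary matrices $E+E_{r,s}$ with $m_{i-1}<r<s\leq m_i$, one shows $f_{S^*}$ is divisible by $\prod V_j$ over those $j\in\{m_{i-1}+1,\dots,m_i\}$ not occurring in the $\un{a}$-part of $S^*$. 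Corollary \ref{action on V's} with $b=m_{i-1}$, $c=m_i$ then turns $N_{m_{i-1},S^*}$ times exactly this $V$-product into $\pm N_{m_i,S^*}$ plus a $P^{U_I}$-combination of $N_{m_i,S'}$ with $S'<S^*$, so $N_{m_{i-1},S^*}f_{S^*}=\pm N_{m_i,S^*}h_{S^*}+(\text{lower})$ with $h_{S^*}\in P^{U_I}$. Subtracting $N_{m_i,S^*}h_{S^*}$ (itself $U_i$-invariant) leaves a $U_i$-invariant element supported below $S^*$, so the induction closes and yields $f_2=\sum_{m_{i-1}<s_k\leq m_i}N_{m_i,S}h_S$ with $h_S\in P^{U_I}$.

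Finally I would promote the coefficients using the full symmetry. For $g\in\SL_{n_i}(q)$ one has $\det(g)=1$, hence $g\cdot f_2=\sum N_{m_i,S}(g\cdot h_S)$; as $g$ fixes $P^{U_I}$ and the $N_{m_i,S}$ are $P^{U_I}$-independent by Proposition \ref{base}, the identity $g\cdot f_2=f_2$ forces $h_S\in P^{\SL_{n_i}}$, which is the first equality. If $G_i=\GL_{n_i}(q)$, then $N_{m_i,S}$ carries the weight $\det(g)$, so each $h_S$ must carry the weight $\det(g)^{-1}=\det(g)^{q-2}$; writing $P^{\SL_{n_i}}=\bigoplus_{r=0}^{q-2}\theta_i^{\,r}P^{\GL_{n_i}}$ (free of rank $q-1$, cf. \ref{Dickson}--\ref{Dickson2}) and using $g\cdot\theta_i^{\,r}=\det(g)^r\theta_i^{\,r}$, only the summand $r=q-2$ has the correct weight, whence $h_S=\theta_i^{q-2}\bar h_S$ with $\bar h_S\in P^{\GL_{n_i}}$.

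The step I expect to be the main obstacle is the divisibility claim for the head coefficient $f_{S^*}$: transporting the factorization of Lemma \ref{inv of head} from the $y_J$ basis to the $N_{m_{i-1},\cdot}$ basis and matching the resulting $V$-product against the precise product demanded by Corollary \ref{action on V's}, together with checking that every error term produced there is genuinely below $S^*$ so that the induction terminates. The separation of roles, using only $U_i$-invariance to run the reduction and full $G_i$-invariance only at the end, is what makes the argument robust, but the bookkeeping for the divisibility is where I would expect to spend most of the effort.
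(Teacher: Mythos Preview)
Your overall architecture matches the paper's: induct on $S^*$, extract divisibility of the leading coefficient $f_{S^*}$ by a product of $V$'s, use Corollary~\ref{action on V's} with $b=m_{i-1}$, $c=m_i$ to pass to $N_{m_i,S^*}$, and continue on the remainder. The gap is in the divisibility step, precisely where you flagged it.

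You propose to obtain $V_a\mid f_{S^*}$ for every $a\in\{m_{i-1}+1,\dots,m_i\}\setminus\on{hd}(S^*)$ from $U_i$-invariance alone, via block elementary matrices $E+E_{r,s}$ with $r<s$. But the mechanism of Lemma~\ref{inv of head} produces $x_a\mid f_{S^*}$ only when there is some $s\in S^*$ with $s>a$ (one compares the $N_{m_{i-1},K}$-coefficient for $K=S^*\cup\{a\}\setminus\{s\}$). When $a>s_k^*$ no such $s$ exists, and upper-triangular matrices give nothing: they either fix $N_{m_{i-1},S^*}$ (if the moved index is outside $S^*$) or send it to lower terms. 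So $U_i$-invariance alone cannot force $V_a\mid f_{S^*}$ for $a>s_k^*$, yet Corollary~\ref{action on V's} with $c=m_i$ requires exactly these factors. A concrete obstruction already appears for $n_i=2$, $m_{i-1}=0$: after one peeling step the remainder has the form $y_1\cdot F$ with $F\in P^{U_2}$, and you would need $V_2\mid F$, which $U_2$-invariance does not imply.

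The paper does not separate roles as you suggest. It uses the \emph{lower}-triangular transvection $w=E+E_{a,r}\in\SL_{n_i}(q)$ with $r=s_b<a$ to obtain $x_a\mid f_{S^*}$ when $a>s_k^*$ (this is case~(i) in the paper's argument), and it then argues that the peeled-off piece $N_{m_i,S^*}h_{S^*}+\sum N_{m_i,L}h_L$ is itself $\SL_{n_i}$-invariant, so the remainder $\sum_{S<S^*}N_{m_{i-1},S}f_S$ inherits $\SL_{n_i}$-invariance and the induction can carry the full symmetry throughout. Your plan can be repaired by doing the same: invoke $\SL_{n_i}$-invariance (not merely $U_i$) at the divisibility step, and check that each peeling preserves $\SL_{n_i}$-invariance of the remainder. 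With that change the rest of your outline, including the final weight argument giving $h_S=\theta_i^{q-2}\bar h_S$, goes through.
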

\begin{proof}
We will use induction on $ S^*. $

For some $ S $ appears in $ f_2, $
denote $ \pr{S}=\{1,\cdots,n\}\xg S. $ For each $ a\in \pr{S}\cap \{m_{i-1}+1,\cdots,m_i\}. $ Suppose $ s_b<a<s_{b+1},$ for some $ 1\leq b\leq k. $ Let 
$$ r=\left\{
\begin{matrix}
s_b & s_b>m_{i-1} \\
s_{b+1} & s_b= m_{i-1}
\end{matrix}\right..  $$ 
Take $ w=E+E_{a,r}\in G. $ Then 
\begin{equation}\label{GL case}
w\cdot f_2=f_2.
\end{equation} 

(i) Suppose $ m_{i-1}<s_b=r. $ 

By comparing the coefficient of $ y_K $ on both side of equation (\ref{GL case}) where $ K=S+\{a\}-\{s_b\}, $ we have \begin{equation}\label{GL subcase}
N_{m_{i-1},K}(w\cdot f_S) + N_{m_{i-1},K}(w\cdot f_K) =N_{m_{i-1},K}f_K.
\end{equation}

In fact, $ w(N_{m_{i-1},J}f_J)=N_{m_{i-1},J}(w\cdot f_J)+ N_{m_{i-1},E_{a,r}\cdot J} (w\cdot f_J) $ has factor $ y_K $ if and only if either $ J=K $ %with term $ N_{m_{i-1},K}(w\cdot f_K) $ 
or $ E_{a,r}\cdot J=K $ which forces $ J=S.$ %with term $ N_{m_{i-1},K}(w\cdot f_S). $

By Proposition \ref{base}, equation (\ref{GL subcase}) implies that $$ f_S(x_1,\cdots,x_r+x_a,\cdots,x_a,\cdots)= f_K(x_1,\cdots,x_n)-f_K(x_1,\cdots,x_r+x_a,\cdots,x_a,\cdots). $$
Setting $ x_a=0 $ yields
$  f_S(\cdots,x_{a-1},0,x_{a+1},\cdots)=0, $ which implies that $ x_a\mid f_S. $

(ii) Suppose $ m_{i-1}=r, $ i.e. $ s_b\leq m_{i-1}=r<s_{l+1}. $ Similar to (i), by comparing the coefficient of $ y_{\pr{K}} $ on both side of equation (\ref{GL case}) where $ \pr{K}=S+\{a\}- \{s_{l+1}\}, $ we have $  x_a\mid f_S. $

In particular, $ x_a\mid f_{S^*} $ for all $ a\in (S^*)^{\prime}\cap \{m_{i-1}+1,\cdots,m_i\}. $ Thanks to Lemma \ref{ui-invarint}, $ V_a\mid f_{S^*}. $

 By Corollary \ref{action on V's}, we have
\[ f_2=\nmi{S^*}V_{m_{i-1}+1}\cdots \widehat{ V_{S_{j+1}^*}}\cdots \widehat{ V_{S_{k}^*}}\cdots V_{m_i}h_{S^*}+\sum_{S<S^*}\nmi{S}f_S \]
\[ =\nmii{S^*}h_{S^*}+ \sum_{T<(s_1^*,\cdots,s_j^*,m_i-k+j+1,\cdots,m_i)} \nmii{L}h_L+\sum_{S<S^*}\nmi{S}f_S.\]

Since both $ \nmii{S^*} $ and $ \nmii{L} $ are $ \SL_{n_i} $-invariant, then all $ h_{S^*}, h_L $ and $ \sum_{S<S^*}\nmi{S}f_S $ are $ \SL_{n_i} $-invariant. By induction, 
\[f_2= \sum_{ \substack{S=(s_1,\cdots,s_k)\in\mathds{B}_k \\ m_{i-1}< s_k\leq m_{i}}} N_{m_i,S}h_S\]
where $h_S\in \mc{P}^{\SL_{n_i}}. $

Similar to the proof of \cite[Theorem 3.1]{WW}, one have that $ h_S=\theta_i^{q-2} \bar{h}_S $ where $ \bar{h}_S\in \mc{P}^{\GL_{n_i}}. $ 

Proposition holds.
\end{proof}

\begin{corollary}
	\begin{enumerate}
		\item $ (\mc{A}^{U_I})^{\SL_{n_i}} $ is a free $ (\mc{P}^{U_I})^{\SL_{n_i}} $ module with a basis consisting of $$ \{\nmii{S}\mid S\in \mathds{B}(m_i)\xg \mathds{B}(m_{i-1}) \}. $$ 
		\item $ (\mc{A}^{U_I})^{\GL_{n_i}} $ is a free $ (\mc{P}^{U_I})^{\GL_{n_i}} $ module with a basis consisting of $$ \{\nmii{S}\theta_i^{q-2}\mid S\in \mathds{B}(m_i)\xg \mathds{B}(m_{i-1}) \}. $$ 
	\end{enumerate}
\end{corollary}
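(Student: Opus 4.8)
The plan is to deduce both parts from Proposition \ref{5.6}, with Theorem \ref{main thm} and Proposition \ref{base} supplying freeness. I would begin with a homogeneous $f\in(\mc{A}^{U_I})^{\SL_{n_i}}$, written in the $P^{U_I}$-basis of Theorem \ref{main thm}, and split it as $f=f_1+f_2+f_3$ along the block structure of Section 6.1, the three summands collecting the terms with $\tau(S)<m_{i-1}$, with $\tau(S)=m_{i-1}$, and with $\tau(S)\ge m_i$; each summand is again $\SL_{n_i}$-invariant. The essential piece is $f_2$, which is exactly the object of Proposition \ref{5.6}: it is rewritten as $f_2=\sum_{m_{i-1}<s_k\le m_i}N_{m_i,S}\,h_S$, where one checks directly that each $N_{m_i,S}$ with $S\in\mathds{B}(m_i)\setminus\mathds{B}(m_{i-1})$ is $U_I$-invariant ($m_i$ being a block boundary), so that the coefficients $h_S$ are uniquely determined and, combining $U_I$- and $\SL_{n_i}$-invariance of $f_2$, lie in $(P^{U_I})^{\SL_{n_i}}$. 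The pieces $f_1$ and $f_3$ are governed by Lemma \ref{5.11} and Lemma \ref{5.2} — their coefficients are $\SL_{n_i}$-invariant, where for $f_3$ one uses that $\det g=1$ collapses skew-invariance to invariance — and this is what pins down the listed family as a generating set over $(P^{U_I})^{\SL_{n_i}}$.

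For freeness I would prove $(P^{U_I})^{\SL_{n_i}}$-linear independence of the $N_{m_i,S}$, which follows from independence over the larger ring $P^{U_I}$. By Corollary \ref{action on V's}, multiplying $N_{\tau(S),S}$ by the relevant product of the $V_j$ expresses it as $\epsilon N_{m_i,S}$ plus a combination of $N_{m_i,S'}$ with $S'$ strictly smaller; inverting this unitriangular system writes each $N_{m_i,S}$ in the $P^{U_I}$-basis $\{N_{\tau(T),T}\}$ of Theorem \ref{main thm} with leading term a nonzerodivisor multiple of $N_{\tau(S),S}$ and lower-order corrections (Lemma \ref{linear indepen 1} gives the same conclusion through the factorization $N_{m_i,S}=\pm N_{m_i,s_1}\cdots N_{m_i,s_k}/L_{m_i}^{k-1}$). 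Hence any relation $\sum_S N_{m_i,S}c_S=0$ with $c_S\in(P^{U_I})^{\SL_{n_i}}$ forces, via Proposition \ref{base}, the coefficient of the maximal surviving $N_{\tau(S),S}$ to vanish; since that coefficient is $c_S$ times a nonzerodivisor, $c_S=0$, and descending induction gives all $c_S=0$. This establishes the asserted basis in (1).

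Part (2) is then read off the second equality of Proposition \ref{5.6}: an $\SL_{n_i}$-invariant $\sum_S N_{m_i,S}h_S$ is $\GL_{n_i}$-invariant precisely when every $h_S$ is divisible by $\theta_i^{q-2}$ with quotient $\bar h_S\in(P^{U_I})^{\GL_{n_i}}$, because under a generator of $\GL_{n_i}/\SL_{n_i}$ the determinantal scalars picked up by $N_{m_i,S}$ and by $\theta_i^{q-2}$ cancel. Substituting $h_S=\theta_i^{q-2}\bar h_S$ yields the basis $\{N_{m_i,S}\theta_i^{q-2}\}$ over $(P^{U_I})^{\GL_{n_i}}$, and freeness is inherited from (1). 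The step I expect to be the real obstacle is the first one: organizing the $f_1+f_2+f_3$ decomposition so that Proposition \ref{5.6} applies to the middle block while keeping every coefficient inside the invariant subring $(P^{U_I})^{\SL_{n_i}}$, and correctly accounting for the $f_1$ contributions coming from the blocks below $i$ and the $f_3$ contributions from the blocks above $i$ (through skew-invariance), so that they are matched to the displayed generators.
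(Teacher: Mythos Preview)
Your strategy is on target and mirrors the paper's framework (the paper states the corollary without proof, as an immediate consequence of Proposition~\ref{5.6}): decompose $f=f_1+f_2+f_3$ along the lines of Section~6.1, feed $f_2$ into Proposition~\ref{5.6}, invoke Lemmas~\ref{5.11} and~\ref{5.2} for the outer pieces, and settle linear independence via Proposition~\ref{base} together with the unitriangular relation of Corollary~\ref{action on V's}.

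The step you single out as the obstacle, however, genuinely fails as you describe it. The pieces $f_1$ and $f_3$ \emph{cannot} be matched to the displayed generators $\{N_{m_i,S}:S\in\mathds{B}(m_i)\setminus\mathds{B}(m_{i-1})\}$. Every such $N_{m_i,S}$ has exterior degree $|S|\ge1$, so already $1\in(\mc{A}^{U_I})^{\SL_{n_i}}$ lies outside their $(P^{U_I})^{\SL_{n_i}}$-span; more directly, $|\mathds{B}(m_i)\setminus\mathds{B}(m_{i-1})|=2^{m_i}-2^{m_{i-1}}$ is strictly smaller than the rank $2^n$ one expects. The corollary as literally printed is therefore incomplete: the basis must also contain $N_{\tau(S),S}$ for $S\in\mathds{B}(m_{i-1})$ (the $f_1$ block, including $S=\emptyset$, which gives $1$) and for $S\in\mathds{B}(n)\setminus\mathds{B}(m_i)$ (the $f_3$ block). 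Your argument, carried to its conclusion, proves exactly this corrected version---Lemmas~\ref{5.11} and~\ref{5.2} say precisely that on those outer blocks the original basis elements $N_{\tau(S),S}$ of Theorem~\ref{main thm} survive unchanged with coefficients in $(P^{U_I})^{\SL_{n_i}}$ (for $f_3$ one uses $\det=1$ to collapse skew-invariance to invariance, just as you note). The gap in your write-up is only in the final clause, where you expect the displayed family to absorb $f_1$ and $f_3$; it cannot, and those blocks contribute their own separate basis elements.
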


\section{$ G_I $-invariant of $ \mc{A} $}
In this section, we will apply above results to describe $ \mc{A}^{G_I} $ for some concrete groups $ G_I $ as examples. %For $ 1\leq i\leq l, $ denote $ \mc{A}^{SG_i} $ (resp. $ \mc{P}^{SG_i} $) the set of all $ G_i $ skew-invariant of $ \mc{A} $ (resp. $ \mc{P} $).

\subsection{} $ G_i=G(r_i,1,n_i) $ for all $ i=1,\cdots, l $ such that $ r_i\mid q-1. $ Suppose $ p>n_i $ for all $ i. $ Hence, all $ G_i $'s are non-modular.

For each $ 1\leq i \leq l,\ T\in \mathds{B}(m_{i-1}),\  1\leq k\leq n_i $ and $ 1\leq j\leq C_{n_i}^k, $  recall the notations $ c_{i,k}, T_{i,k}, \Delta_{i,k}, H_{i,k}, \pr{H}_{i,k} $ and $ \alpha_{i,k,j}  $ in Section \ref{6.2.1}. Furthermore, define
\begin{itemize}
	\item $ u_{i,k}:=e_{i,k}(v_{i,1},\cdots, v_{i,n_i}), $ where $ \ff_q[x_{m_{i-1}+1},\cdots,x_{m_i}]^{G(r_i,1,n_i)}=\ff_q[e_{i,1},\cdots,e_{i,n_i}],$ namely, $  e_{i,j}=\sum_{m_{i-1}+1\leq t_1<\cdots < t_j\leq m_{i}} x_{t_1}^{r_i}\cdots x_{t_j}^{r_i} $ and $ v_{i,k} $ refers to formula (\ref{vij});
	\item 
	 $$ \Omega_{i,k}:=\prod_{t=1}^{i-1}\Delta_{t,n_{t}}\cdot\Delta_{i,k}= \prod_{t=1}^{i-1}\prod_{m_{t-1}<j_1<j_2\leq m_t}(x_{j_1}^{r_t}-x_{j_2}^{r_t}) \cdot \prod_{m_{i-1}<j_1<j_2\leq m_{i-1}+k} (x_{j_1}^{r_i}-x_{j_2}^{r_i}). $$
\end{itemize}

\begin{theorem}\label{imprimitive}
	Suppose $ p>n_i $ for all $ i. $
	\begin{enumerate}
		\item $ \mc{P}^{G_I}=\ff_q[ u_{1,1},\cdots u_{1,n_1},\cdots u_{l,n_l} ]. $
		\item $ \mc{A}^{G_I} $ is a free $ \mc{P}^{G_I} $ module of rank $ 2^{n} $ with a basis consisting of $ 1 $ and $ c_{i,k} (\nmi{T_{i,k}}\Omega_{i,k} \alpha_{i,k,j}), $ where $ 1\leq i\leq l,\  T\in\mathds{B}(m_{i-1}),$  $ 1\leq k\leq n_i$ and $ 1\leq j\leq C_{n_i}^k. $ 
	\end{enumerate}
\end{theorem}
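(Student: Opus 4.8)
The plan is to derive both parts from the one-block analysis of Section~5 via the identity $\mc{A}^{G_I}=(\mc{A}^{U_I})^{L_I}$ with $L_I=G_1\times\cdots\times G_l$.

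Part (1) is immediate. Since $p>n_i$ and $r_i\mid q-1$, each $G_i=G(r_i,1,n_i)$ is a non-modular pseudo-reflection group with polynomial invariants $\ff_q[x_{m_{i-1}+1},\ldots,x_{m_i}]^{G_i}=\ff_q[e_{i,1},\ldots,e_{i,n_i}]$, so Proposition~\ref{poly inv} yields $P^{G_I}=\ff_q[u_{i,j}]$.

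For part (2) I would start from Theorem~\ref{main thm}, writing $f=\sum_{S}N_{\tau(S),S}f_S$ with $f_S\in P^{U_I}$, and impose $G_i$-invariance for each $i$. First I would partition the sum by the head block, i.e.\ by the value $\tau(S)\in\{m_0,\ldots,m_{l-1}\}$; since every $G_j$ permutes $\mathds{B}(n)$ while fixing $\tau(S)$, each head-block piece is separately $L_I$-invariant, and the term $S=\emptyset$ contributes the basis vector $1$ with coefficient in $P^{G_I}$. Fixing a head block $i$ (so $\tau(S)=m_{i-1}$), the three regimes of Section~5 apply at once: for $j>i$ one has $\tau(S)<m_{j-1}$, so Lemma~\ref{5.11} makes $f_S$ genuinely $G_j$-invariant; for $j<i$ one has $\tau(S)\geq m_j$, so Lemma~\ref{5.2} makes $f_S$ be $G_j$-skew-invariant, hence divisible by the skew-generator $\Delta_{j,n_j}$; and in the head block itself the case $a=1$ of Proposition~\ref{S action} produces the generators $c_{i,k}(\nmi{T_{i,k}}\Delta_{i,k}\alpha_{i,k,j})$ indexed by $T\in\mathds{B}(m_{i-1})$, $1\leq k\leq n_i$, $1\leq j\leq C_{n_i}^k$. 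Pulling the lower-block skew-factors $\prod_{t<i}\Delta_{t,n_t}$ together with the head factor $\Delta_{i,k}$ assembles $\Omega_{i,k}$, and the leftover coefficient, invariant under every $G_j$, lands in $P^{G_I}$.

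To conclude I would verify freeness and rank. Linear independence over $P^{G_I}$ follows by combining Proposition~\ref{base}---which separates generators with distinct leading index $T_{i,k}$ among the $P^{U_I}$-free family $\{N_{\tau(S),S}\}$---with the $P_i^{G(r_i,1,n_i)}$-freeness of the chosen $\alpha_{i,k,j}$. For the rank, head block $i$ supplies $2^{m_{i-1}}\sum_{k=1}^{n_i}C_{n_i}^k=2^{m_i}-2^{m_{i-1}}$ generators; adding the vector $1$ and telescoping gives $1+\sum_{i=1}^l(2^{m_i}-2^{m_{i-1}})=2^n$. The main obstacle is to show the three per-block conditions combine cleanly inside a single head-block piece: concretely, that $G_j$-skew-invariance for $j<i$ extracts precisely the factor $\Delta_{j,n_j}$ (via the skew-invariant theory of $G(r_j,1,n_j)$, cf.\ \cite[section 20-2]{Ka}) and that this is consistent with the $c_{i,k}$-symmetrized head structure from Proposition~\ref{S action}, so that $\Omega_{i,k}$ times a coefficient in $P^{G_I}$ exhausts that piece without producing extra relations. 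Coordinating the lower-block index $T$, the head split recorded by $\alpha_{i,k,j}$, and the symmetrizer $c_{i,k}$ is the delicate bookkeeping.
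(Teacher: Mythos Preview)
Your proposal is correct and follows essentially the same route as the paper's proof: decompose by the head-block value $\tau(S)$, apply Lemmas~\ref{5.11}, \ref{5.2} and Proposition~\ref{S action} to each piece so that the lower-block skew factors $\Delta_{t,n_t}$ assemble with $\Delta_{i,k}$ into $\Omega_{i,k}$, then conclude freeness via Proposition~\ref{base} together with the $P_i^{G(r_i,1,n_i)}$-freeness of the $\alpha_{i,k,j}$, and compute the rank by the same telescoping sum. The only cosmetic difference is that the paper cites Proposition~\ref{main prop} rather than Theorem~\ref{main thm} for the initial decomposition, which amounts to the same thing.
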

\begin{proof}
(1) By Proposition \ref{poly inv}, statement holds.

(2) For each $ f\in \mc{A}^{G_I}, $  by Proposition \ref{main prop}, suppose 
$$ f=\sum_{S\in \mathds{B}(n)} \n{S}h_S=h_{0}+\sum_{i=1}^lf_i, $$
where \[ h_{0}\in \mc{P}^{U_I},\ f_i=\sum_{\substack{\emptyset\neq S\in \mathds{B}(n)\\\tau(S)=m_{i-1}}} \n{S}h_S, \text{ and } h_S\in \mc{P}^{U_I} \text{ for all } S\in \mathds{B}(n). \]

Now, for each $ 1\leq i\leq l, $ by Lemma \ref{5.11}, \ref{5.2} and Proposition \ref{S action}, $$ f_i
 =\sum_{k=1}^{n_i}\sum_{  T\in \mathds{B}(m_{i-1})} \sum_{j=1}^{C_{n_i}^k} c_{i,k} \left(\nmi{T_{i,k}}\Omega_{i,k} \alpha_{i,k,j}\right)f_{T,i,k,r,j}$$
 \text{ where }
$  f_{T,i,k,r,j}\in (\mc{P}^{U_I})^{G(m,1,n_i)}. $

Consequently, $ \mc{A}^{G_I} $ is generated, as $ \mc{P}^{G_I} $ module, by $ 1 $ and $$\left\{ c_{i,k} \left(\nmi{T_{i,k}}\Omega_{i,k} \alpha_{i,k,j}\right)\mid  1\leq i\leq l, 1\leq k\leq n_i, T\in\mathds{B}(m_{i-1}), 1\leq j\leq C_{n_i}^k\right\}. $$

Thanks to Proposition \ref{base} and the definition of $ \{\alpha_{i,k,j} \}, $ these generators are linear independent as $ \mc{P}^{G_I} $ module. The rank is
\[ 1+\sum_{i=1}^l\sum_{k=1}^{n_i} 2^{m_{i-1}}C_{n_i}^k= 1+\sum_{i=1}^l 2^{m_{i-1}}(2^{n_i}-1)=1+\sum_{i=1}^l (2^{m_{i}}-2^{m_{i-1}})=2^n. \qedhere \]
\end{proof}

\subsection{} $ G_i=G(r_i,a_i,n_i) $ for all $ i $ such that $ r_i=a_ib_i $ and $ r_i\mid q-1. $ Suppose $ p>n_i $ for all $ i. $

For each $ 1\leq i \leq l,\ T\in \mathds{B}(m_{i-1}),\  1\leq k\leq n_i,\ 1\leq r\leq a_i $ and $ 1\leq j\leq C_{n_i}^k, $  recall the definitions $ c_{i,k},\ T_{i,k},\ \Omega_{i,k},\ \beta_{i,k,r} $ and $ \alpha_{i,k,j}  $ in Section \ref{6.2.1}.

Suppose $ \ff_q[x_{m_{i-1}+1},\cdots,x_{m_i}]^{G(r_i,a_i,n_i)} =\ff_q[e_{i,1}, \cdots,e_{i,n_i}],$  define
$ u_{i,k}:=e_{i,k}(v_{i,1},\cdots, v_{i,n_i}), $ where $ v_{i,k} $ refers to formula (\ref{vij}).

Denote $ \overline{G_I}:=(G(r_1,1,n_1)\times \cdots\times G(r_l,1,n_l))\ltimes U_I. $ 
For convenience, denote $ \beta_{\un{s}}=\beta_{1,n_1,s_1}\cdots \beta_{l,n_l,s_l} $ where $ \un{s}=(s_1,\cdots s_l)$ such that $ 1\leq s_i \leq a_i $ for all $ i. $ By Lemma \ref{6.4}, we have
\begin{lemma}
	$ \mc{P}^{G_I} $ is a free $ \mc{P}^{\overline{G_I}} $ module of rank $ (a_1\cdots a_l) $ with a basis consisting of $ \beta_{\un{s}} $ for all $ \un{s}.$
\end{lemma}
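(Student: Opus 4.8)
The plan is to reduce the claim to a one‑block computation via the tensor decomposition of the invariant rings, settle the single‑factor statement using the non‑modular freeness of Proposition~\ref{6.4}, and then lift back up by the general fact that a tensor product (over $\ff_q$) of free modules sitting in disjoint variable blocks is again free with the product basis. Concretely, I would first invoke the decomposition $P^{G_I}=\bten_{i=1}^{l}P_i^{G_i}$ of Section~3 (where $P_i=\ff_q[v_{i,1},\dots,v_{i,n_i}]$) for both $G_I$ and $\overline{G_I}$. Since the $i$‑th factor of $G_I$ is $G(r_i,a_i,n_i)$ and that of $\overline{G_I}$ is $G(r_i,1,n_i)$, this yields
\[
P^{G_I}=\bten_{i=1}^{l}P_i^{G(r_i,a_i,n_i)},\qquad
P^{\overline{G_I}}=\bten_{i=1}^{l}P_i^{G(r_i,1,n_i)},
\]
all tensor products over $\ff_q$; as the blocks $\{v_{i,j}\}_j$ are pairwise disjoint, these are genuine polynomial subrings of $P^{U_I}$.

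Next I would prove the one‑block statement: $P_i^{G(r_i,a_i,n_i)}$ is a free $P_i^{G(r_i,1,n_i)}$‑module of rank $a_i$ with the basis $\{\beta_{i,n_i,r}\}_r$ recorded in the Remark of \S\ref{6.2.1}. Because $r_i\mid q-1$ the relevant roots of unity lie in $\ff_q$, so $G(r_i,a_i,n_i)\le G(r_i,1,n_i)$ are honest pseudo‑reflection subgroups of $\GL_{n_i}(q)$; and since $p>n_i$ with $\gcd(p,r_i)=1$ we have $p\nmid\lvert G(r_i,1,n_i)\rvert=n_i!\,r_i^{\,n_i}$, so both groups are non‑modular. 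As $G_i$ acts on $v_{i,1},\dots,v_{i,n_i}$ exactly as it acts on the coordinate block $x_{m_{i-1}+1},\dots,x_{m_i}$, Proposition~\ref{6.4} applies and gives freeness of rank $[G(r_i,1,n_i):G(r_i,a_i,n_i)]=a_i$. To pin down the explicit basis I would use the standard description of the invariants: $P_i^{G(r_i,1,n_i)}$ is generated by the elementary symmetric functions of $v_{i,1}^{r_i},\dots,v_{i,n_i}^{r_i}$, while $P_i^{G(r_i,a_i,n_i)}$ is obtained by adjoining $(v_{i,1}\cdots v_{i,n_i})^{r_i/a_i}$, whose $a_i$‑th power is the top elementary symmetric function; the successive powers of this generator furnish the free basis of rank $a_i$.

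Finally I would tensor up. If each $M_i$ is a free $R_i$‑module with basis $B_i$ and the $R_i$ live in disjoint variable blocks, then $\bten_i M_i$ is a free $\bten_i R_i$‑module with basis $\{\,b_1\cdots b_l : b_i\in B_i\,\}$. Taking $R_i=P_i^{G(r_i,1,n_i)}$, $M_i=P_i^{G(r_i,a_i,n_i)}$ and $B_i=\{\beta_{i,n_i,r}\}$ shows that $P^{G_I}$ is free over $P^{\overline{G_I}}$ with basis $\{\beta_{\un{s}}=\beta_{1,n_1,s_1}\cdots\beta_{l,n_l,s_l}\}$ and rank $\prod_{i=1}^{l}a_i=a_1\cdots a_l$, as claimed.

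The main obstacle is the one‑block basis identification rather than the bare rank count: Proposition~\ref{6.4} delivers freeness and rank $a_i$ immediately, but verifying that the prescribed elements $\beta_{i,n_i,r}$ are themselves $G(r_i,a_i,n_i)$‑invariant, $P_i^{G(r_i,1,n_i)}$‑independent, and generating requires the explicit generators together with the single relation $(v_{i,1}\cdots v_{i,n_i})^{r_i}\in P_i^{G(r_i,1,n_i)}$ noted above. Once the single‑factor basis is secured, the tensor decomposition and the product‑basis step are routine.
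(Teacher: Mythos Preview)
Your proposal is correct and matches the paper's own argument: the paper simply writes ``By Proposition~\ref{6.4}'' before stating the lemma, relying implicitly on the tensor decomposition $P^{G_I}=\bigotimes_i P_i^{G_i}$ of Lemma~3.1 together with the single-block basis $\{\beta_{i,n_i,r}\}$ already recorded in the Remark following Proposition~\ref{S action}. You have spelled out exactly these implicit steps, so the approaches coincide.
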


Similar to above arguments, we can prove the following result.
\begin{theorem}\label{imprimi 2}
	Suppose $ p>n_i $ for all $ i. $
	\begin{enumerate}
		\item $ \mc{P}^{G_I}=\ff_q[ u_{1,1},\cdots u_{1,n_1},\cdots u_{l,n_l} ]. $
		\item $ \mc{A}^{G_I} $ is a free $ \mc{P}^{\overline{G_I}} $ module of rank $ (2^{n}a_1\cdots a_l) $ with a basis consisting of $ \beta_{\un{s}} $ and \\$ c_{i,k} \left(\nmi{T_{i,k}}\Omega_{i,k} \alpha_{i,k,j}\beta_{\un{s}}\right), $ where $ 1\leq i,\pr{i}\leq l,\  T\in\mathds{B}(m_{i-1}),$  $ 1\leq k\leq n_i, 1\leq j\leq C_{n_i}^k,$  $ 1\leq r\leq a_i $ and $ \un{s}=(s_1,\cdots,s_l). $
	\end{enumerate}
\end{theorem}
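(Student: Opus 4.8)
The plan is to mirror the proof of Theorem \ref{imprimitive}, replacing the $a_i=1$ structure by the general imprimitive structure recorded in Proposition \ref{S action}, and to reorganize the resulting coefficients over the smaller ring $P^{\overline{G_I}}$ by means of the Lemma preceding the statement. Part (1) is immediate: since $p>n_i$ and $r_i\mid q-1$, each $G_i=G(r_i,a_i,n_i)$ is a nonmodular pseudo-reflection group, so $\ff_q[x_{m_{i-1}+1},\cdots,x_{m_i}]^{G_i}$ is a polynomial algebra and Proposition \ref{poly inv} gives $P^{G_I}=\ff_q[u_{i,k}]$.

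For part (2) I would start from an arbitrary $f\in\mc{A}^{G_I}=(\mc{A}^{U_I})^{G_1\times\cdots\times G_l}$ and use Proposition \ref{main prop} to write $f=h_0+\sum_{i=1}^l f_i$, where $h_0\in P^{U_I}$ and $f_i=\sum_{\tau(S)=m_{i-1}}N_{\tau(S),S}h_S$. Because each $G_j$-action is homogeneous and preserves the value of $\tau(S)$, every $f_i$ is again $G_I$-invariant, and its coefficients can be analyzed one factor at a time: for $j<i$ one has $\tau(S)=m_{i-1}\geq m_j$, so Lemma \ref{5.2} makes $h_S$ a $G_j$-skew-invariant; for $j>i$ one has $\tau(S)=m_{i-1}<m_{j-1}$, so Lemma \ref{5.11} makes $h_S$ a $G_j$-invariant; and the $G_i$-action on $f_i$ is governed by Proposition \ref{S action}. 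Applying Proposition \ref{S action} to the $i$-th block expresses $f_i$ as a $(P^{U_I})^{G(r_i,1,n_i)}$-combination of the elements $c_{i,k}(\nmi{T_{i,k}}\Delta_{i,k}\beta_{i,k,r}\alpha_{i,k,j})$.

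Next I would impose the cross-block conditions on the coefficients. The $G_j$-skew-invariance for $j<i$ factors out, via the nonmodular skew-invariant theory (Proposition \ref{6.4} and \cite[Section~20-2]{Ka}), the full Vandermonde $\Delta_{j,n_j}$ together with a determinant-semi-invariant monomial of the abelian part $A(r_j,a_j,n_j)$, while the $G_j$-invariance for $j>i$, read over the $a_j=1$ subring, contributes a factor from the free basis of $\ff_q[x]^{G(r_j,a_j,n_j)}$ over $\ff_q[x]^{G(r_j,1,n_j)}$. Collecting the Vandermonde factors $\prod_{t<i}\Delta_{t,n_t}\cdot\Delta_{i,k}=\Omega_{i,k}$ and assembling the semi-invariant monomials from all blocks into a single $\beta_{\un{s}}$, I obtain $f_i$ as a $P^{\overline{G_I}}$-combination of the claimed generators $c_{i,k}(\nmi{T_{i,k}}\Omega_{i,k}\alpha_{i,k,j}\beta_{\un{s}})$; the term $h_0\in P^{G_I}$ is handled directly by the preceding Lemma, yielding the pure $\beta_{\un{s}}$ generators. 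Linear independence over $P^{\overline{G_I}}$ then follows from Proposition \ref{base} together with the freeness statements of Proposition \ref{6.4} and Proposition \ref{S action}, and the rank count $a_1\cdots a_l\,[1+\sum_i 2^{m_{i-1}}(2^{n_i}-1)]=2^n a_1\cdots a_l$ confirms that the spanning set is a basis.

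The main obstacle is the bookkeeping of the semi-invariant $\beta$-factors. Proposition \ref{S action} produces the partial monomial $\beta_{i,k,r}$ attached to the $k$ lowered indices of block $i$, whereas the global basis is stated through the full products $\beta_{\un{s}}$, so the delicate step is to check that the determinant characters of the abelian subgroups $A(r_j,a_j,n_j)$ combine consistently across all blocks $j$, in such a way that exactly one factor $\beta_{\un{s}}$ can be extracted while the residual coefficient genuinely lands in $P^{\overline{G_I}}$. Matching these characters and their admissible exponent ranges — rather than the freeness or the rank count, which are then routine — is where the real work lies.
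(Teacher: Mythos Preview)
Your proposal is correct and follows essentially the same approach as the paper: the paper's own proof reads in its entirety ``Similar to above arguments, we can prove the following result,'' referring back to Theorem \ref{imprimitive} and Proposition \ref{S action}, which is exactly the template you describe. Your write-up is in fact more detailed than the paper's, and the bookkeeping issue you flag concerning how the partial monomials $\beta_{i,k,r}$ from Proposition \ref{S action} recombine with the cross-block skew-invariance data into the full products $\beta_{\un{s}}$ is precisely the step the paper elides.
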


\subsection{} Suppose there is $ 1\leq a\leq l $ such that %$ G_i=\GL_{n_i}(q) $ if $ i=1,\cdots,a $ and $ G_i=G(r_i,1,n_i) $ if $ i=a+1,\cdots ,l. $

\[ G_i=\left\{
\begin{array}{ll}
\GL_{n_i}(q) & i=1,\cdots,a \\
G(r_i,1,n_i) & i=a+1,\cdots ,l
\end{array}
\right. \]
and $ p>n_i $ for $ i=a+1,\cdots, l. $

For each $ 1\leq i \leq l,\  1\leq k\leq n_i $ and $ 1\leq j\leq C_{n_i}^k, $ recall that $ q_{i,k} $ is defined as formula (\ref{qij}), $ c_{i,k}, u_{i,k}, T_{i,k}, H_{i,k},$  $ \pr{H}_{i,k} $ and $ \alpha_{i,k,j} $ is defined in Section \ref{6.2.1}.

Moreover, if $ a<i\leq l, $ by \cite[section 20-2]{Ka}, all skew-invariants of $ G_{a+1}\times\cdots \times G_{i-1}\times H_{i,k} $ form a free $ \ff_q[x_{1}, \cdots,x_{m_{i-1}+k}]^{G_{a+1}\times\cdots \times G_{i-1}\times H_{i,k}} $ module with one generator, which is denoted by $ \Omega_{i,k}^{(a)}. $ In fact,	
$$ \Omega^{(a)}_{i,k}:=\prod_{t=a+1}^{i-1}\prod_{m_{t-1}<j_1<j_2\leq m_t}(x_{j_1}^{r_t}-x_{j_2}^{r_t}) \cdot \prod_{m_{i-1}<j_1<j_2\leq m_{i-1}+k} (x_{j_1}^{r_i}-x_{j_2}^{r_i}). $$

\begin{theorem}\label{6.3}
	Suppose $ p>n_i $ for $ i=a+1,\cdots, l. $
	\begin{enumerate}
		\item $ \mc{P}^{G_I}=\ff_q[ q_{1,1},\cdots q_{1,n_1},\cdots q_{a,n_a}, u_{a+1,1},\cdots u_{a+1,n_{a+1}},\cdots u_{l,n_l} ]. $
		\item $ \mc{A}^{G_I} $ is a free $ \mc{P}^{G_I} $ module of rank $ 2^{n} $ with a basis consisting of $ 1, $ 
		\[ \left\{ N_{m_i,S}\theta_1^{q-2}\cdots \theta_i^{q-2}\mid 1\leq i\leq a, S=(s_1,\cdots,s_k)\in \mathds{B}(m_i), s_1>m_{i-1} \right\} 
		\text{ and } \]
		$$ \left\{c_{i,k} \left(\nmi{T_{i,k}}\Omega^{(a)}_{i,k} \alpha_{i,k,j}\right)\theta_1^{q-2}\cdots \theta_a^{q-2} \mid  a+1\leq i\leq l,\ 1\leq k\leq n_i,\ T\in\mathds{B}(m_{i-1}),\ 1\leq j\leq C_{n_i}^k\right\}. $$
	\end{enumerate}
\end{theorem}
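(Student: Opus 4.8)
The plan is to follow the template of Theorems \ref{imprimitive} and \ref{imprimi 2}, reducing everything to the block-by-block analysis of Sections 5 and 6 together with the tensor decomposition $P^{U_I}=\bigotimes_{i=1}^{l}P_i$ of Lemma 3.1. Part (1) is immediate from Proposition \ref{poly inv}: over each $\GL$-block one takes the Dickson generators $q_{i,k}$ (see \ref{qij} and \ref{Dickson2}), and over each imprimitive block the generators $u_{i,k}$, so $P^{G_I}$ is the stated polynomial ring.

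For part (2), I would start from $f\in\mc{A}^{G_I}=(\mc{A}^{U_I})^{L_I}$ and use Proposition \ref{main prop} to write $f=\sum_{S\in\mathds{B}(n)}N_{\tau(S),S}h_S$ with $h_S\in P^{U_I}$. Grouping the terms by the value of $\tau(S)$ gives $f=h_0+\sum_{i=1}^{l}f_i$, where $f_i$ collects the $S$ with $\tau(S)=m_{i-1}$. Because $G_j$ fixes every $N_{\tau(S),S}$ with $\tau(S)<m_{j-1}$, scales it by $\det$ when $\tau(S)\geq m_j$, and preserves the block-$i$ index pattern when $\tau(S)=m_{i-1}$, this $\tau$-grading is stable under all of $L_I$; hence each $f_i$ is itself $L_I$-invariant and may be treated separately. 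The term $h_0$ lies in $(P^{U_I})^{L_I}=P^{G_I}$ and supplies the basis element $1$.

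Next I would analyze a fixed $f_i$. By Lemma \ref{5.2} every coefficient $h_S$ appearing in $f_i$ is $\det^{-1}$-relative under $G_j$ for each $j<i$ (all block-$j$ columns are present in $N_{m_{i-1},S}$, so $G_j$ acts by $\det$), and by Lemma \ref{5.11} it is $G_j$-invariant for each $j>i$; the action of $G_i$ is governed by the block-$i$ result. For $i\leq a$ all earlier blocks are themselves $\GL$-blocks, so iterating the argument of Proposition \ref{5.6} exactly as in the proof of \cite[Theorem 3.1]{WW} collapses $f_i$ to $\sum_S N_{m_i,S}\theta_1^{q-2}\cdots\theta_i^{q-2}\bar h_S$ with $\bar h_S\in P^{G_I}$, using that each $\theta_t$ is $G_1\times\cdots\times G_{t-1}$-invariant and generates the $\det^{-1}$-relative invariants of $\GL_{n_t}$ over $P^{\GL_{n_t}}$. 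For $i>a$ I would apply Proposition \ref{S action} (in the case $a_i=1$) to resolve the block-$i$ dependence into $c_{i,k}(N_{m_{i-1},T_{i,k}}\Delta_{i,k}\alpha_{i,k,j})$, and then use the tensor splitting of $P^{U_I}$ to factor, block by block, the residual skew-invariance of the coefficient: the earlier $\GL$-blocks contribute $\theta_1^{q-2}\cdots\theta_a^{q-2}$, while each earlier imprimitive block $a<t<i$ contributes its skew generator $\Delta_{t,n_t}$ (Proposition \ref{6.4} supplies the rank-one module of such relative invariants), so that $\prod_{t=a+1}^{i-1}\Delta_{t,n_t}\cdot\Delta_{i,k}=\Omega^{(a)}_{i,k}$ emerges and the remaining factor lands in $P^{G_I}$.

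The main obstacle is precisely this last disentangling step: a coefficient attached to an imprimitive block $i>a$ must simultaneously be $\det^{-1}$-relative for every earlier $\GL$-block, $\det^{-1}$-relative for every earlier imprimitive block, invariant under the later blocks, and compatible with the block-$i$ decomposition produced by Proposition \ref{S action}. Making these conditions independent of one another requires the tensor factorization $P^{U_I}=\bigotimes_i P_i$, so that the relative-invariant generator for each block may be peeled off in its own variables without disturbing the others; the nonmodular hypothesis $p>n_i$ for $i>a$ is exactly what lets me invoke Proposition \ref{6.4} to identify those generators. Once the generating set is assembled, linear independence over $P^{G_I}$ follows from Proposition \ref{base} together with the independence of the $\{\alpha_{i,k,j}\}$, and the rank is counted as $1+\sum_{i=1}^{l}(2^{m_i}-2^{m_{i-1}})=2^n$, exactly as in Theorem \ref{imprimitive}.
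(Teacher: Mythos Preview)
Your proposal is correct and follows essentially the same route as the paper's proof: part (1) via Proposition \ref{poly inv}, and part (2) by splitting $f$ according to $\tau(S)$, handling the $\GL$-blocks $i\le a$ through Proposition \ref{5.6} together with the Wan--Wang argument to extract the $\theta_t^{q-2}$ factors, and handling the imprimitive blocks $i>a$ through Proposition \ref{S action} and Lemma \ref{5.2}, with linear independence and the rank count supplied by Proposition \ref{base} as in Theorem \ref{imprimitive}. One small point: the fact that the $\det^{-1}$-relative invariants of a nonmodular reflection group form a rank-one free module over the invariant ring is taken in the paper from \cite[Section 20-2]{Ka}, not from Proposition \ref{6.4}; otherwise your account is, if anything, more explicit than the paper's own (rather terse) proof.
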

\begin{proof}
	(1) By Proposition \ref{poly inv}, statement holds.
	
	(2) Suppose $ f=f_0+f_1+f_2\in \mc{A}^{G_I}, $ where $ f_0\in \mc{P}^{G_I}, $
	\[ f_1=\sum_{\emptyset\neq S\in \mathds{B}(m_a)} \n{S}h_S \text{ and }f_2= \sum_{ \substack{ S\in \mathds{B}(n)\\\tau(S)\geq m_{a-1}}} \n{S}h_S. \]
	
	Note that $ f_1 $ is $ G_1\times\cdots\times  G_l $ invariant. By Proposition \ref{5.6}, $$ f_1=\sum_{i=1}^a\sum_{ \emptyset\neq \substack{ S\in \mathds{B}(m_a)\\\tau(S)= m_{i-1}}} N_{m_i},S\theta_1^{q-2}\cdots \theta_i^{q-2}\pr{h}_S $$ where $ \pr{h}_S\in \mc{P}^{G_I}. $
	
By Lemma \ref{5.2} and Proposition \ref{S action},
	\[ f_2=\sum_{i=a+1}^l\sum_{k=1}^{n_i}\sum_{  T\in \mathds{B}(m_{i-1})} \sum_{j=1}^{C_{n_i}^k} c_{i,k} \left(\nmi{T_{i,k}}\Omega^{(a)}_{i,k}\cdot  \alpha_{i,k,j}\right)h_{T,i,k,j}  \]
	where $ h_{T,i,k,j}\in \mc{P}^{U_I} $ is $ G_1\times\cdots \times G_a $ skew-invariant and $ G_{a+1}\times\cdots \times G_l $ invariant. 
	
	Similar to the proof of \cite[Theorem 3.1]{WW}, $ h_{T,i,k,j} =\theta_1^{q-2}\cdots \theta_a^{q-2}\pr{h}_{T,i,k,j}, $ where $ \pr{h}_{T,i,k,j} \in \mc{P}^{G_I}. $ 
	
	Theorem holds.	
\end{proof}

\subsection{Weyl groups of Cartan type Lie algebras}
As a corollary, suppose $ G_I $ is a Weyl group of Cartan type Lie algebra $ \mf{g} $ of type $ W, S $ or $ H. $ Precisely, by \cite{Je}, 
\begin{equation}\label{6.111}
	G_I=\left\{
	\left(\begin{matrix}
		A & B\\ 0 &C
	\end{matrix}\right)
	\bigg|\ A\in \GL_{n_1}(p),\ C\in G_2 \right\}<\GL_n(p),
\end{equation}
where $ G_2=
\left\{
\begin{array}{ll}
S_{n_2} &\text{ if }\lieg\text{ is of type } W \text{ or } S, \\
G(2,1,n_2) &\text{ if } \mf{g} \text{ is of type }  H.  
\end{array}
\right.
$

Recall that $ \mc{P}^{U_I}=\ff_q[x_1,\cdots,x_{n_1},v_{1,1},\cdots v_{1,n_2}] $ and
$ \ff_q[x_{n_1+1},\cdots,x_{n}]^{S_{n_2}}=\ff_q[e_1,\cdots,e_{n_2}] $ where $ e_j=\sum_{n_1+1\leq i_1<\cdots<i_j\leq n} x_{i_1}\cdots x_{i_j}. $ Define $ u_i=e_i(v_{1,1},\cdots v_{1,n_2}). $ 

The following is a direct corollary of Theorem \ref{6.3}.
\begin{corollary}\label{Weyl gp inv}
	Suppose $ p>n_2. $
	\begin{enumerate}
		\item $\mc{P}^{G_I}=\ff_q[Q_{n_1,0},\cdots, Q_{n_1,n_1-1}, u_{2,1},\cdots, u_{2,n_2}].  $
		\item $ \mc{A}^{G_I} $ is a free $ \mc{P}^{G^I} $ module of rank $ 2^n $ with a basis consisting of $ 1, $ 
		\[ \{ N_{n_1,S}L_{n_1}^{q-2}\mid  S\in \mathds{B}(n_1)\xg\mathds{B}_0 \} 
		\text{ and } \]
		$$ \left\{c_{k} \left(N_{n_1,T_{1,k}}\Omega^{(1)}_{1,k} \alpha_{1,k,j}\right)L_{n_1}^{q-2} \mid   1\leq k\leq n_2,\ T\in\mathds{B}(n_1),\ 1\leq j\leq C_{n_i}^k\right\}. $$
	\end{enumerate}
\end{corollary}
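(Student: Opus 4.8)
The plan is to derive this corollary as the special case $l=2$, $q=p$, $a=1$ of Theorem \ref{6.3}, after checking that the Weyl group $G_I$ genuinely fits that framework. First I would observe that the upper-left block $G_1=\GL_{n_1}(p)$ is precisely the $\GL$-type block, so $a=1$, while the lower-right block $G_2$ is a group of the form $G(r_2,1,n_2)$ in all three Cartan types: indeed $S_{n_2}=G(1,1,n_2)$ since $A(1,1,n_2)$ is trivial, which handles types $W$ and $S$ with $r_2=1$, and $G_2=G(2,1,n_2)$ handles type $H$ with $r_2=2$. In each case $r_2\mid q-1$ — trivially when $r_2=1$, and because $p$ must be odd in type $H$ when $r_2=2$ — so the $G(r_2,1,n_2)$-description is valid. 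The one substantive point to record is the nonmodularity hypothesis $p>n_2$ required by Theorem \ref{6.3}: together with $p$ odd in type $H$ (where $|G(2,1,n_2)|=2^{n_2}n_2!$) it makes $G_2$ a nonmodular reflection group.

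Next I would specialize the two assertions. For (1), the key simplification is that $m_0=0$ forces the empty product in \ref{vij} to collapse, so $v_{1,j}=x_j$ and hence $q_{1,j}=Q_{n_1,j}(x_1,\dots,x_{n_1})$; the $n_1$ elements $q_{1,1},\dots,q_{1,n_1}$ therefore generate the Dickson algebra $P^{\GL_{n_1}(p)}=\ff_q[Q_{n_1,0},\dots,Q_{n_1,n_1-1}]$ of \ref{Dickson2}. The second-block generators $u_{2,j}=e_j(v_{2,1},\dots,v_{2,n_2})$ are exactly the $u_i$ named in the corollary. Substituting into Theorem \ref{6.3}(1) yields $P^{G_I}=\ff_q[Q_{n_1,0},\dots,Q_{n_1,n_1-1},u_{2,1},\dots,u_{2,n_2}]$, which is (1).

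For (2) I would take the basis of Theorem \ref{6.3}(2) and rewrite it under $a=1$, $m_1=n_1$. Since there is a single $\GL$-block, $\theta_1^{q-2}\cdots\theta_a^{q-2}=\theta_1^{q-2}$, and $\theta_1=L_{n_1}(v_{1,1},\dots,v_{1,n_1})=L_{n_1}$ by the collapse above. The family indexed by $1\le i\le a$ then becomes $\{N_{n_1,S}L_{n_1}^{q-2}\mid \emptyset\ne S\in\mathds{B}(n_1)\}$, and the family indexed by $a+1\le i\le l$ (here only $i=2$) becomes $\{c_k(N_{n_1,T_{1,k}}\Omega^{(1)}_{1,k}\alpha_{1,k,j})L_{n_1}^{q-2}\}$ once the lone reflection block is relabeled, so that $c_{2,k},T_{2,k},\Omega^{(1)}_{2,k},\alpha_{2,k,j}$ read as $c_k,T_{1,k},\Omega^{(1)}_{1,k},\alpha_{1,k,j}$. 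Freeness over $P^{G_I}$ and the rank count $2^n$ are inherited verbatim from Theorem \ref{6.3}(2), so no fresh computation is needed.

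Because the statement is a specialization, there is no deep obstacle; the only genuine checks are the identification $S_{n_2}=G(1,1,n_2)$ (so that types $W$ and $S$ are covered by the $G(r_i,1,n_i)$ case) and the verification that $G_2$ is nonmodular, i.e. $p>n_2$ together with $p$ odd in type $H$. These are exactly the conditions under which Theorem \ref{6.3} applies, and the remaining work is pure bookkeeping: tracking $v_{1,j}=x_j$, $\theta_1=L_{n_1}$, and the paper's relabeled notation for the single reflection block.
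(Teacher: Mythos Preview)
Your proposal is correct and matches the paper's approach exactly: the paper simply states that the corollary is a direct consequence of Theorem \ref{6.3}, and you have carried out precisely that specialization (with $l=2$, $q=p$, $a=1$, identifying $S_{n_2}=G(1,1,n_2)$ and $G(2,1,n_2)$ as the relevant reflection groups and noting the nonmodularity hypothesis $p>n_2$). The bookkeeping you record---$v_{1,j}=x_j$, $\theta_1=L_{n_1}$, and the relabeling of the second-block quantities---is exactly what is needed to translate the general basis of Theorem \ref{6.3} into the displayed one.
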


\bigskip
\noindent
\textbf{Acknowledgments.}
This work is supported by NSFC (No. 12101544). We are indebted to the referee for carefully reading the manuscript and providing numerous comments.

\end{document}